\theoremstyle{plain}% default
\newtheorem{thm}{Theorem}[section] 
\newtheorem{lem}[thm]{Lemma}
\newtheorem{prop}[thm]{Proposition} 
\newtheorem{cor}[thm]{Corollary}
\newtheorem{conj}[thm]{Conjecture}
\theoremstyle{definition}
\newtheorem{defn}{Definition}[section]
\newcommand{\bpartial}{\boldsymbol{\partial}}
\author{Mirk\'o Visontai} 
\address{Department of Mathematics, Royal Institute of Technology, SE-100 44 Stockholm, Sweden}
\email{visontai@kth.se}
\author{Nathan Williams}
\address{School of Mathematics, University of Minnesota, Minneapolis, MN 55455}
\email{will3089@math.umn.edu}
\title{Stable multivariate $W$-Eulerian polynomials} 
\begin{document}
\begin{abstract}
We prove a multivariate strengthening of Brenti's result that every root 
of the Eulerian polynomial of type $B$ is real. Our proof combines a 
refinement of the descent statistic for signed permutations with the 
notion of real stability---a generalization of real-rootedness to 
polynomials in multiple variables. The key is that our refined multivariate
Eulerian polynomials satisfy a recurrence given by a 
stability-preserving linear operator.

Our results extend naturally to colored permutations, and we also give
stable generalizations of recent real-rootedness results due to Dilks, Petersen, and 
Stembridge on affine Eulerian polynomials of types $A$ and $C$. Finally, 
although we are not able to settle Brenti's real-rootedness conjecture 
for Eulerian polynomials of type $D$, nor prove a companion conjecture 
of Dilks, Petersen, and Stembridge for affine Eulerian polynomials of types 
$B$ and $D$, we indicate some methods of attack and pose some related open problems.
\end{abstract}

\maketitle

\section{Introduction}
In this paper, we study the real-rootedness property of Eulerian polynomials 
for Coxeter groups from a combinatorial perspective. There is a well-known 
combinatorial interpretation of the Eulerian polynomial $A_n(x)$ as the 
descent generating polynomial for permutations in the Coxeter group $A_n$,
the group $\mathrm{Sym}(n+1)$ of all permutations on $n+1$ letters (see 
\cite{Foata2010, Sta97}). The notion of a descent can be extended to elements 
of all finite Coxeter groups as follows: for an element $\sigma$ of the Coxeter 
group $W$ the descents are exactly those generators $s$ of $W$ whose 
action on $\sigma$ reduces its length. In~\cite{Bre94}, Brenti used this 
interpretation of a descent to define the $W$-Eulerian polynomial, denoted 
$W(x)$, for any finite Coxeter group $W$. 

Brenti showed that many classical results about $A_n(x)$ hold for the other 
Eulerian polynomials as well.  In this paper we will investigate the remarkable 
property that $A_n(x)$ has only real roots,
a result due to Frobenius~\cite{Fro10}. Brenti proved the analogous result for type $B$, 
and checked by computer that it also held for the exceptional cases, but left 
type $D$---the only remaining case---as a conjecture. 

\begin{conj}[Conjecture 5.2 in~\cite{Bre94}]
\label{conj:brenti}
For every finite Coxeter group $W$, the descent generating polynomial $W(x)$ has only real roots.
\end{conj}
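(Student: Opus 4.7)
The plan is to prove Conjecture~\ref{conj:brenti} by establishing something much stronger: a multivariate real-stability property for a refined descent generating polynomial, and then specialize. Since $A_n(x)$ is Frobenius's theorem and the exceptional groups are a finite computation (as Brenti already carried out), the real content is in the infinite families $B$, $C$, and $D$; note that $B_n$ and $C_n$ share the same abstract descent polynomial, so only types $B$ and $D$ remain.

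For each $W$ in the classical families, I would first attach to every $\sigma \in W$ a tuple of statistics refining the single descent count --- for instance, by recording for each position (or each simple reflection type) whether a descent occurs there, and using a separate variable $x_i$ for each --- to obtain a multivariate polynomial $W_n(x_1,\ldots,x_n)$ that specializes to $W_n(x)$ on the diagonal $x_1=\cdots=x_n=x$. The next step is to find a recurrence $W_{n+1}(x_1,\ldots,x_{n+1}) = T_n\, W_n(x_1,\ldots,x_n)$ where $T_n$ is a linear operator built from the kind of atomic pieces (multiplications by affine forms, partial derivatives, shifts) whose stability-preservation is characterized by the Borcea--Br\"and\'en theory. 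For type $A$ the relevant operator essentially comes from inserting a new letter into a permutation at one of $n{+}1$ positions; for type $B$ one has the extra sign flip at position $0$, and the goal is to package both the insertion and the sign choice into a single operator that is manifestly a composition of stability preservers. Starting from a trivially stable base case and iterating yields real stability of $W_n(x_1,\ldots,x_n)$; the diagonal specialization then gives real-rootedness of $W_n(x)$, which is the type $B$ case of the conjecture.

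The main obstacle is type $D$. The branched Dynkin diagram of $D_n$ obstructs any clean "insert one letter" recurrence, and the natural multivariate refinement on even-signed permutations does not obviously satisfy an operator recurrence whose pieces lie in the Borcea--Br\"and\'en catalogue. My two fallback strategies would be: (i) exploit a classical identity expressing $D_n(x)$ as a combination of $B_{n-1}(x)$ and a correction polynomial, and try to transfer stability through this identity --- this requires showing that the correction is itself stable and that the combination preserves stability, which is delicate because sums of stable polynomials are rarely stable; or (ii) introduce an auxiliary variable tracking the type $D$ "even sign" parity constraint and look for a stability-preserving operator on the larger polynomial ring that respects the parity. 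If both approaches fail, I would regard this as evidence that type $D$ lies outside the reach of the stability-preserver toolkit and pose it as an open problem, while recording the stronger stability results obtained in types $A$, $B$, and $C$ as the main theorems of the paper.
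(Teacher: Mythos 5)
The statement you are attacking is a conjecture, and the paper itself does not prove it: type $D$ remains open within the paper (and was only settled in later work cited in a note added in proof). Your overall plan---multivariate refinement, stability-preserving operator recurrence, diagonal specialization---matches the paper's program for types $A$, $B/C$, and the colored permutation groups. However, the specific refinement you describe, ``recording for each position (or each simple reflection type) whether a descent occurs there,'' is a known dead end. The paper explicitly notes (citing Haglund--Visontai) that indexing variables by descent \emph{positions} yields a polynomial that fails stability already in type $A$; indexing by simple reflection type is even worse, since in types $A$ and $D$ all generators are conjugate and that refinement collapses to the univariate polynomial. The missing idea, which your proposal never isolates, is the descent \emph{top} statistic: a descent at position $i$ contributes the variable $x_{\max(\sigma_i,\sigma_{i+1})}$ and an ascent contributes $y_{\max(\sigma_i,\sigma_{i+1})}$ (with absolute values in the signed case), so that variables track values rather than positions. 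It is precisely this value-indexed refinement that turns the insertion of $n+1$ (resp.\ $\pm(n+1)$) into a single linear operator $(x_{n+1}+y_{n+1}) + x_{n+1}y_{n+1}\bpartial$ (resp.\ $(qx_{n+1}+y_{n+1}) + (1+q)x_{n+1}y_{n+1}\bpartial$) that is certifiably stability-preserving by the Borcea--Br\"and\'en criterion.

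Your two fallbacks for type $D$ are exactly what the paper tries and finds do not work. Transferring stability through Stembridge's identity $D_n(x) = B_n(x) - n2^{n-1}xA_{n-2}(x)$ produces the candidate $D_n(\mathbold{x},\mathbold{y}) = B_n(\mathbold{x},\mathbold{y};1) - n2^{n-1}x_ny_nA_{n-2}(\mathbold{x},\mathbold{y})$, which the paper shows fails stability already at $n=3$ via the multiaffine criterion. The na\"ive extension of the descent-top statistic to $D_n$ (taking $\sigma_0=-\sigma_2$) is not even multiaffine and is likewise unstable. So your proposal, if carried out with the corrected statistic, would recover the paper's stability results for the tractable types and then leave you exactly where the paper ends: with the type $D$ case posed as an open problem rather than proved, which means the conjecture as stated is not established.
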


Dilks, Petersen, and Stembridge later extended the definition of Eulerian polynomials to 
include affine descents, and proposed the following companion to Brenti's conjecture.
\begin{conj}[Conjecture 4.1 in~\cite{DPS09}]
\label{conj:dps}
For every finite Weyl group $W$, the affine descent generating polynomial $\widetilde W(x)$ 
has only real roots.
\end{conj}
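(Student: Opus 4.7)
The plan is to follow the strategy sketched in the abstract: refine $\widetilde W(x)$ to a multivariate polynomial $\widetilde W(x_1,\ldots,x_n)$ that tracks \emph{which} affine descents occur (not merely how many), show that this refinement is real stable, and recover the conjecture by specializing all $x_i$ to a common variable. Diagonal specialization preserves real stability, and a real-stable univariate polynomial is precisely a real-rooted one, so the multivariate statement implies Conjecture~\ref{conj:dps} and is strictly stronger.

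The main technical step is to exhibit a recurrence $\widetilde W_n(\mathbf x) = T_n\, \widetilde W_{n-1}(\mathbf x)$ in which $T_n$ is a linear operator known to preserve real stability. I would invoke the Borcea--Br\"and\'en characterization and build $T_n$ from elementary stability preservers: multiplication by stable polynomials, partial differentiations of the form $1+x_i\,\partial/\partial x_i$, symmetrizations, and linear changes of variable with nonnegative matrices. For types $A$ and $C$ the affine descent set has a clean insertion structure---appending a new letter can create a new descent in only a controlled list of positions, and this list dictates the shape of $T_n$. After verifying that my formal setup reproduces the types $A$ and $C$ results claimed in the paper, the base case $n=1$ is a one-line stability check.

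The serious obstacle is types $B$ and $D$, where (as the authors openly acknowledge) no such stability-preserving recurrence is presently known, and this is precisely why the conjecture remains open. Combinatorially, the sign-change generator in type $B$, and the even-sign-change generator together with the extra affine root in type $D$, couple the affine descent at several positions simultaneously, so the naive insertion does not translate into a single-variable stability preserver. Two avenues I would try: (i) \emph{lift}---introduce extra bookkeeping variables recording the sign data, aim to realize $T_n$ as a known preserver on the larger polynomial ring, and then specialize back via a stability-preserving projection; (ii) \emph{replace}---seek a non-insertion recurrence, for instance by peeling off a different orbit, or by using a shuffle decomposition inspired by the hyperoctahedral analogue of RSK, that decouples sign flips from position refinement. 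I expect (i) to be the more flexible route, but identifying the right auxiliary statistic is the crux of the problem and the step where a genuinely new idea is needed.
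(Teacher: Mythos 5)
This is Conjecture~4.1 of Dilks, Petersen, and Stembridge, quoted in the paper \emph{as a conjecture}: the paper does not prove it, and explicitly states that the affine types $B$ and $D$ remain open. What the paper does prove toward it are stable multivariate refinements of $\widetilde A_n$ and $\widetilde C_n$ using the descent-top and ascent-top statistics --- via Lemma~\ref{lem:FulmanPetersenA} for type $A$, and the recurrence $\widetilde C_n(\mathbold x,\mathbold y)=2x_ny_n\bpartial\,\widetilde C_{n-1}(\mathbold x,\mathbold y)$ for type $C$ --- with the exceptional types checked by computer.

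Your write-up is therefore an honest reconstruction of the paper's framework rather than a proof, and on the cases that are actually settled it matches the paper's argument. On the open cases you offer two heuristics, and it is worth knowing how each compares to what the paper actually tries. Your avenue (ii) --- trade the insertion recurrence for a different decomposition --- is precisely the route taken in Section~\ref{sec:typetildeBD}: the paper substitutes the stable refinements $\widetilde C_n(\mathbold x,\mathbold y)$ and $B_{n-1}(\mathbold x,\mathbold y;1)$ into the DPS identity $2\widetilde C_n(x)=\widetilde B_n(x)+2nxC_{n-1}(x)$ to obtain a candidate $\widetilde B_n(\mathbold x,\mathbold y)=2\widetilde C_n(\mathbold x,\mathbold y)-2nx_ny_nB_{n-1}(\mathbold x,\mathbold y;1)$, verifies its stability only for $n\le 4$, and leaves stability in general as a fresh conjecture; the companion identity $\widetilde B_n(x)=\widetilde D_n(x)+2nxD_{n-1}(x)$ is offered as a template for type $D$. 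So that avenue has already been pushed to a concrete candidate, and the candidate's stability is itself unproven. Your avenue (i) --- adding sign-tracking auxiliary variables --- is exactly how the paper handles the \emph{finite} type $B$ case (the $q^{N(\sigma)}$ weight makes the insertion operator stability-preserving), but the affine $0$-descent in types $B$ and $D$ depends on the tail of the window (on $\sigma_n$, resp.\ $\sigma_{n-1},\sigma_n$) rather than on a count that composes under insertion; this is precisely the coupling you flag, and the $q$-trick does not extend to it.

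So there is no erroneous step in your proposal: it stops, as the paper does, before the hard part, and you say so. You should be aware, though, that you are not proposing a new attack --- you are rediscovering the attack the paper already records as inconclusive. Note also the paper's note added in proof: Savage and Visontai subsequently resolved part of this conjecture in~\cite{SV13}.
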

Again, this was not completely proved---$\widetilde{A}_n(x)$ and 
$\widetilde{C}_n(x)$ were shown to have only real roots,
the exceptional cases were verified, but the real-rootedness of the affine
Eulerian polynomials of types ${B}$ and ${D}$ remains an open problem.

In this paper, we build on the idea of real \emph{stability}---a generalization of the notion of
real-rootedness to multivariate polynomials. We combine this with simple recurrences for 
multivariate refinements of certain Eulerian polynomials to provide simple proofs of multivariate
generalizations of known real-rootedness results.  
Specifically, we give a general framework to show that the recurrence relations satisfied by multivariate 
$W$- and $\widetilde{W}$-Eulerian polynomials (for certain finite Coxeter groups $W$) 
are stability-preserving.  We then use properties of 
stability to show that this implies that the univariate counterparts of these polynomials are also stable, 
which is equivalent the statement that they have only real roots. 

The remainder of this paper is structured as follows.  In Section~\ref{sec:prelim}, we introduce notation, 
define the $W$-Eulerian and the affine $\widetilde{W}$-Eulerian polynomials for finite Coxeter groups 
and finite Weyl groups, respectively. We also review the required definitions and results related to 
real stability. For clarity and completeness, we 
begin in Section~\ref{sec:typeA} with a proof due to Br\"and\'en of the stability of the multivariate 
Eulerian polynomial of type $A$.  In Sections~\ref{sec:typeB} and~\ref{sec:signed}, we generalize 
this idea in several directions simultaneously, to type $B$ (signed permutations) and the 
generalized symmetric group (colored permutations), and also to
multiple $q$ variables. Sections~\ref{sec:typetildeA} and~\ref{sec:typetildeC} then address the affine
Eulerian polynomials for types $A$ and $C$.  The unresolved cases of Conjecture~\ref{conj:brenti} 
(type $D$) and Conjecture~\ref{conj:dps} (types $B$ and $D$) are examined within our 
multivariate framework in Section~\ref{sec:typeD}. We conclude with a discussion about the 
connection between our statistics with Catalan numbers, Motzkin paths and Laguerre polynomials.

\section{Preliminaries}
\label{sec:prelim}

We begin by introducing some notation.  For a positive integer $n$, let $[n]$ be the set $\{1, \dots, n\}$ 
and let $\mathbold{x}$ be the $n$-tuple $(x_1, \dots, x_n)$;  for example, 
$\mathbold{x}+\mathbold{y}=(x_1+y_1,\ldots,x_n+y_n)$.  For $\mathcal{T}$ a set (or multiset) with entries from $[n]$, 
we let $\mathbold{x}^\mathcal{T} = \prod_{i \in \mathcal{T}} x_i$; for example, 
$(\mathbold{x}+\mathbold{y})^{[n]} = \prod_{i=1}^n (x_i+y_i).$ 
The cardinality of $\mathcal{T}$ is written $|\mathcal{T}|$. The concatenation of $\mathbold{x}$ and 
$\mathbold{y}$ is denoted by $(\mathbold{x},\mathbold{y})$. 
We apply a function $f$ of $n$ variables to an $n$-tuple $\mathbold{x}$ by writing $f(\mathbold{x})=f(x_1, \dots, x_n)$.  
We will often deal with functions that have $\mathbold{x}$ and $\mathbold{y}$ as variables, and so we define the special symbol 
$\bpartial = \sum_{i=1}^n (\partial/\partial x_i+\partial/\partial y_i)$
as a shorthand for the sum of partial derivatives with respect to all $x_i$ and $y_i$ variables.

Finally, the theorems and propositions that are taken from previous works are clearly marked
by a reference (indicating the source); as far as we know, all other results are new. 

\subsection{$W$-Eulerian Polynomials}
\label{sec:eulpoly}

Let $S$ be a set of Coxeter generators, $m$ be a Coxeter matrix, and 
\[
W = \left\langle S : (s s')^{m(s,s')} = e,\; \text{ for } s,s'\in S, \;m(s,s')<\infty \right\rangle
\] 
be the corresponding Coxeter group (see~\cite{BB05}).  Given such a Coxeter system $(W,S)$ 
and $\sigma \in W$, we denote by $\ell_W(\sigma)$ the length of $\sigma$ in $W$ with respect to $S$.

\begin{defn}
For $W$ a finite Coxeter group, with generator set $S =\{s_1, \dotsc, s_n \}$, the \emph{descent} set of $\sigma \in W$ is
\[
 \mathcal{D}_W(\sigma) = \left\{i \in [n]: \ell_W(\sigma s_i) < \ell_W(\sigma)\right\}.
\]
\end{defn}

\begin{defn}
  For $W$ a finite Coxeter group, the $W$-\emph{Eulerian polynomial} is the descent generating
  polynomial 
  \[
    W (x) = \sum_{\sigma \in W} x^{|\mathcal{D}_W(\sigma)|}.
  \]
\end{defn}

The above definitions were extended for a subset of finite Coxeter groups in \cite{DPS09} to include affine descents. 
\begin{defn}
For $W$ a finite Weyl group, the \emph{affine descent} set of $\sigma \in W$ is
\[
 \widetilde{\mathcal{D}}_{W}(\sigma) = \mathcal{D}_W(\sigma) \cup 
 \left\{0 : \ell_W(\sigma s_0) > \ell_W(\sigma)\right\},
\]
where $s_0$ is the reflection corresponding to the lowest root in the underlying crystallographic 
root system. See \cite{DPS09} for further details and the motivation behind this definition.
\end{defn}

\begin{defn}
  For $W$ a finite Weyl group, the $\widetilde {W}$-\emph{Eulerian polynomial} is the affine 
  descent generating polynomial (over the corresponding finite Weyl group $W$)
  \[
    \widetilde{W} (x) = \sum_{\sigma \in W} x^{|\widetilde{\mathcal{D}}_{W}(\sigma)|}.
  \]
\end{defn}

\subsection{Real Stable Polynomials}
\label{sec:stable}

We define real stability, which generalizes the 
notion of real-rootedness from real univariate polynomials to real multivariate polynomials.

Let $\mathcal{H}_+ = \{z \in \mathbb{C} : \Im(z)>0 \}$ denote the open upper complex 
half-plane and similarly let $\mathcal{H}_-= \{z \in \mathbb{C} : \Im(z)<0 \}$.

\begin{defn}
  A polynomial $f \in \mathbb{R}[\mathbold{x}]$ is (real) \emph{stable} if $f\equiv 0$ or for any 
  $\mathbold{z} \in \mathcal{H}_+^n$, $f(\mathbold{z}) \neq 0$.
\end{defn}

Note that a univariate polynomial $f(x) \in \mathbb{R}[x]$ has real roots if and only if it is stable.
Following \cite{Wag11}, we let $\mathfrak{S}_\mathbb{R}[\mathbold{x}]$ denote the set of stable 
polynomials in $\mathbb{R}[\mathbold{x}]$.

\medskip
In this paper, we have a fixed template for our proofs.  We argue by induction, first checking stability 
(by hand) for the base case.  Next, we establish recursive formulas of the following form 
\[
W_n = T\left(W_{n-1}\right),
\] 
where $W_n$ is the multivariate $W$-Eulerian polynomial of a group $W$ of 
rank $n$, and $T$ is some linear operator.  Finally, we show that the linear operator $T$ is 
stability-preserving, using the following theorem.

Recall that a polynomial $f(\mathbold{x})$ is multiaffine if the power of each indeterminate $x_i$ is 
at most one. For a set $\mathcal{P}$ of polynomials, let $\mathcal{P}^{MA}$ be the set of multiaffine polynomials in $\mathcal{P}$.

\begin{thm}[Part of Theorem 3.5 in~\cite{Wag11}]
\label{thm:stable}
Let $T: \mathbb{R}[\mathbold{x}]^{MA} \to \mathbb{R}[\mathbold{x}]$ be a linear operator acting on the
variables $\mathbold{x}$.  If the polynomial 
$T((\mathbold{x}+\mathbold{y})^{[n]}) \in 
\mathfrak{S}_\mathbb{R}[\mathbold{x},\mathbold{y}]$, 
then $T$ maps $\mathfrak{S}_\mathbb{R}[\mathbold{x}]^{MA}$ 
\emph{into} $\mathfrak{S}_\mathbb{R}[\mathbold{x}]$.
\end{thm}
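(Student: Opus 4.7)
The plan is to realize $T$ on multiaffine polynomials as a differential operator applied to its ``symbol'' $G(\mathbold{x},\mathbold{y}) := T\bigl((\mathbold{x}+\mathbold{y})^{[n]}\bigr)$, then convert stability of $G$ into stability of $T(f)$ via a Lieb--Sokal-type closure property of $\rstable$. First I would expand $(\mathbold{x}+\mathbold{y})^{[n]} = \sum_{S \subseteq [n]} \mathbold{x}^S \mathbold{y}^{[n]\setminus S}$ and, since $T$ acts only in the $\mathbold{x}$ variables, obtain
\[
G(\mathbold{x},\mathbold{y}) = \sum_{S \subseteq [n]} T(\mathbold{x}^S)\, \mathbold{y}^{[n]\setminus S}.
\]
Multiaffinity in $\mathbold{y}$ lets one recover each $T(\mathbold{x}^S)$ by applying $\prod_{i \in [n]\setminus S} \partial/\partial y_i$ and evaluating at $\mathbold{y}=0$. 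Assembling this for any multiaffine $f(\mathbold{y}) = \sum_S c_S \mathbold{y}^S$ yields the key identity
\[
T(f)(\mathbold{x}) = \hat f(\partial_\mathbold{y})\, G(\mathbold{x},\mathbold{y}) \,\Big|_{\mathbold{y}=0}, \qquad \hat f(\mathbold{y}) := \sum_{S \subseteq [n]} c_S\, \mathbold{y}^{[n]\setminus S}.
\]

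Next I would record two closure properties of stability. The reversal $\hat f$ is stable whenever $f$ is, because the M\"obius transformation $z \mapsto -1/z$ preserves $\mathcal{H}_+$ (so $\mathbold{y}^{[n]} f(-y_1^{-1}, \dots, -y_n^{-1}) \in \rstable[\mathbold{y}]$) and the substitution $y_i \mapsto -y_i$ preserves stability of real polynomials; together these differ from $\hat f$ only by an overall sign. Evaluation at the real point $\mathbold{y}=0$ is also a standard stability-preserving operation. Granting for the moment a Lieb--Sokal-type lemma---that for any $h \in \rstable[\mathbold{y}]^{MA}$ the differential operator $h(\partial_\mathbold{y})$ preserves membership in $\rstable[\mathbold{x},\mathbold{y}]$---one applies it with $h = \hat f$ to the stable polynomial $G$, specializes $\mathbold{y}=0$, and concludes $T(f) \in \rstable[\mathbold{x}]$.

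The main obstacle is this Lieb--Sokal-type lemma. I would prove it first for a linear symbol $h(\mathbold{y}) = \alpha_0 + \sum_i \alpha_i y_i \in \rstable$, where a direct check on the zero set of $h(\partial_\mathbold{y})G$ in $\mathcal{H}_+^{2n}$ (using the sign pattern that stability forces on the $\alpha_i$) rules out any zeros there. One then promotes this to an arbitrary stable multiaffine $h$ by polarizing $h$ into a product of linear forms in fresh variables (justified by Grace--Walsh--Szeg\H{o}), applying the linear case one factor at a time, and specializing the fresh variables back; all limiting steps are controlled by Hurwitz's theorem, which forbids a locally uniform limit of zero-free analytic functions on a connected open subset of $\mathcal{H}_+^{2n}$ from acquiring isolated zeros. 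Threading Grace--Walsh--Szeg\H{o}, Hurwitz, and the linear base case together while maintaining multiaffinity and real coefficients is the technically delicate step.
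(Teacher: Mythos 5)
First, a point of reference: the paper does not prove Theorem~\ref{thm:stable} at all---it is quoted from Wagner's survey~\cite{Wag11} (ultimately Borcea--Br\"and\'en)---so your proposal can only be measured against that standard proof. Your global architecture matches it: the identity $T(f)(\mathbold{x})=\hat f(\partial_{\mathbold{y}})\,T\bigl((\mathbold{x}+\mathbold{y})^{[n]}\bigr)\big|_{\mathbold{y}=0}$ is correct for multiaffine $f$, the stability of the reversal $\hat f$ (via $y_i\mapsto -1/y_i$ followed by $y_i\mapsto -y_i$, using that real stable polynomials are also nonvanishing on $\mathcal{H}_-^n$) is correct, and specializing at the real point $\mathbold{y}=0$ is a legitimate closure operation.

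The gap is in your proof of the key lemma that $h(\partial_{\mathbold{y}})$ preserves stability for every stable multiaffine $h$. The lemma is in fact true (though justifying it by the Borcea--Br\"and\'en symbol theory would be circular here), but your promotion from linear $h$ to general multiaffine $h$ by ``polarizing $h$ into a product of linear forms in fresh variables, justified by Grace--Walsh--Szeg\H{o}'' is not an available operation: GWS goes in the opposite direction, identifying a univariate polynomial with its \emph{symmetric} multiaffine polarization; it does not factor a general (non-symmetric) multiaffine stable polynomial, and such polynomials typically admit no factorization into linear forms even as limits. A concrete case already inside your scheme: $f(y_1,y_2)=1-y_1y_2$ is stable and multiaffine (no two points of $\mathcal{H}_+$ have product equal to $1$), and its reversal $\hat f=y_1y_2-1$ is stable but irreducible; homogenizing, the associated quadratic form $y_1y_2-z^2$ has full rank, and since products of affine-linear forms have rank at most two and the rank condition is closed, $y_1y_2-1$ is not even a limit of such products. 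So for this $\hat f$ your linear-case-plus-factorization step never gets started, and Hurwitz's theorem cannot supply a factorization that does not exist. The standard repair (Borcea--Br\"and\'en, as presented in~\cite{Wag11}) is to avoid applying $\hat f(\partial_{\mathbold{y}})$ in one blow: form the product $\hat f(\mathbold{v})\cdot T\bigl((\mathbold{x}+\mathbold{y})^{[n]}\bigr)$, which is stable because the variable sets are disjoint, and then apply the Lieb--Sokal lemma one variable at a time, trading each $v_i$ for $-\partial/\partial y_i$; every application only requires degree at most one in the single variable $v_i$ being eliminated, so no factorization of $\hat f$ is needed, and setting $\mathbold{y}=0$ at the end recovers $\pm T(f)$. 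With your promotion step replaced by this iterated Lieb--Sokal argument (and a self-contained proof of Lieb--Sokal itself), the rest of your outline goes through.
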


Once the multivariate $W$-Eulerian polynomials are shown to be stable, we can then reduce
them to real stable univariate polynomials using the following operations.

\begin{lem}[Part of Lemma 2.4 in~\cite{Wag11}]
\label{lem:stable}
Given $i,j \in [n]$, the following operations preserve real stability of $f \in \mathbb{R}[\mathbold{x}]$:
\begin{enumerate}
         \item \emph{Differentiation:} $f \mapsto \partial f/\partial x_i.$
	\item \emph{Diagonalization:} $f \mapsto f|_{x_i=x_j}.$
	\item	\emph{Specialization:} for $a \in \mathbb{R}$, $f\mapsto f|_{x_i = a}.$
\end{enumerate}
\end{lem}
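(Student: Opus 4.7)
My plan is to treat the three operations separately, using standard complex-analytic tools. In each case, the transformed polynomial has real coefficients by inspection, so only non-vanishing on the relevant open product of upper half-planes needs to be checked. \emph{Diagonalization} is essentially immediate: regarding $h := f|_{x_i = x_j}$ as a polynomial in the $n - 1$ variables obtained by discarding $x_j$, any evaluation of $h$ at a point of $\mathcal{H}_+^{n-1}$ equals an evaluation of $f$ at the point of $\mathcal{H}_+^n$ obtained by duplicating the $x_i$-coordinate, which is nonzero by stability of $f$.

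For \emph{differentiation} $f \mapsto \partial f/\partial x_i$, I would fix arbitrary $z_k \in \mathcal{H}_+$ for $k \neq i$ and regard $g(x_i) := f(z_1, \ldots, z_{i-1}, x_i, z_{i+1}, \ldots, z_n)$ as a univariate polynomial in $x_i$ with (generally) complex coefficients. Stability of $f$ forces every root of $g$ into $\mathbb{C} \setminus \mathcal{H}_+ = \overline{\mathcal{H}_-}$, and since $\overline{\mathcal{H}_-}$ is convex, the Gauss--Lucas theorem places the roots of $g'$ in the same closed half-plane. Hence $g'$ is nowhere zero on $\mathcal{H}_+$, which is exactly the claim that $\partial f/\partial x_i$ is stable. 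Note that one must use the general Gauss--Lucas statement, not any real-rootedness argument, since $g$ is only guaranteed to have complex coefficients once the other variables are fixed at points of $\mathcal{H}_+$.

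For \emph{specialization} $f \mapsto f|_{x_i = a}$ with $a \in \mathbb{R}$, the plan is to approximate from the interior: for $\epsilon > 0$, set $f_\epsilon := f|_{x_i = a + i\epsilon}$. Since $a + i\epsilon \in \mathcal{H}_+$, each $f_\epsilon$ is nowhere zero on $\mathcal{H}_+^{n-1}$, and $f_\epsilon \to f|_{x_i = a}$ uniformly on compact subsets as $\epsilon \to 0^+$. The multivariable form of Hurwitz's theorem---a locally uniform limit of nowhere-vanishing holomorphic functions on a connected open set is either nowhere vanishing or identically zero---then delivers stability of the limit. This specialization case is the only real obstacle: because $a$ sits on the boundary $\partial \mathcal{H}_+$ rather than inside it, stability cannot be checked pointwise at $a$, and the Hurwitz detour through the interior approximations $a + i\epsilon$ is essential.
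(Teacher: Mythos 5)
The paper offers no proof of this lemma---it is imported verbatim from \cite{Wag11}---so the only comparison to make is with the standard argument underlying that citation, and your proposal is essentially that argument: diagonalization by evaluating $f$ at a point with a duplicated coordinate, differentiation via Gauss--Lucas on univariate restrictions, and specialization at a real value via Hurwitz applied to the interior approximations $x_i = a + i\epsilon$. Parts (2) and (3) are complete as written; in particular, in (3) you correctly identify that the boundary substitution forces the Hurwitz detour, and the dichotomy ``nowhere vanishing or identically zero'' is exactly what the definition of stability tolerates.

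Part (1), however, has a genuine (if standard and reparable) gap: the Gauss--Lucas step silently assumes that the restriction $g(x_i) = f(z_1,\dots,x_i,\dots,z_n)$ is nonconstant. If $g$ is a nonzero constant, then $g' \equiv 0$, the conclusion ``$g'$ is nowhere zero on $\mathcal{H}_+$'' is false, and your pointwise argument only shows that $\partial f/\partial x_i$ vanishes on the entire fiber $\{z_k,\, k\neq i\}\times\mathbb{C}$. This does not by itself contradict $\partial f/\partial x_i \not\equiv 0$: a nonzero real polynomial can vanish on such a fiber with the fixed coordinates in $\mathcal{H}_+$ (for instance $x_1^2 - 2\Re(z_1)x_1 + |z_1|^2$ vanishes identically on $\{x_1 = z_1\}$), so ruling this out must use stability of $f$, not general principles. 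The standard patch: assume $\partial f/\partial x_i \not\equiv 0$, write $f = \sum_{k=0}^{d} f_k\, x_i^k$ with $f_k$ polynomials in the remaining variables, $d = \deg_{x_i} f \geq 1$ and $f_d \not\equiv 0$. Since $(iy)^{-d} f(\,\cdot\,, iy) \to f_d$ locally uniformly as $y \to +\infty$, and each of these functions is nonvanishing on $\mathcal{H}_+^{n-1}$ by stability of $f$, Hurwitz's theorem shows $f_d$ is nonvanishing on $\mathcal{H}_+^{n-1}$. Hence for every choice of $z_k \in \mathcal{H}_+$, $k \neq i$, the restriction $g$ has degree exactly $d \geq 1$, is nonconstant, and your Gauss--Lucas argument then closes the proof.
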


Finally, there is an easy-to-check condition for real stability that we will use to show that certain polynomials are not real stable. 

\begin{thm}[Theorem 5.6 in~\cite{Bra07}]
\label{thm:checkstable}
 Let $f \in \mathbb{R}[\mathbold{x}]^{MA}$.  Then $f$ is real stable if and only if for all $i,j \in [n]$ and 
 for all $\mathbold{a} \in \mathbb{R}^n$, 
 $\frac{\partial f}{\partial x_i}(\mathbold{a}) \frac{\partial f}{\partial x_j}(\mathbold{a})-\frac{\partial^2 f}{\partial x_i \partial x_j}(\mathbold{a}) f (\mathbold{a})\geq 0.$
\end{thm}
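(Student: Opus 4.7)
The plan is to prove the two implications separately. The forward direction reduces cleanly to a two-variable computation via the specialization part of Lemma~\ref{lem:stable}, whereas the reverse direction is the main obstacle and would require induction combined with a root-tracking argument.

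\textbf{Forward direction ($\Rightarrow$).} Fix indices $i, j \in [n]$ and a point $\mathbold{a} \in \mathbb{R}^n$; the case $i = j$ is immediate since both sides reduce to the square of $\partial f/\partial x_i(\mathbold{a})$. Iteratively applying the specialization part of Lemma~\ref{lem:stable} at $x_k = a_k$ for $k \notin \{i, j\}$ produces a real stable polynomial $g(x_i, x_j) = A + B x_i + C x_j + D x_i x_j$ with $A, B, C, D \in \mathbb{R}$, since restrictions of a multiaffine polynomial remain multiaffine. The real affine shift $(x_i, x_j) \mapsto (x_i + a_i, x_j + a_j)$ preserves stability (directly from the definition of $\mathcal{H}_+$), and a short expansion identifies the coefficients of the shifted polynomial as exactly
\[
 f(\mathbold{a}),\quad \tfrac{\partial f}{\partial x_i}(\mathbold{a}),\quad \tfrac{\partial f}{\partial x_j}(\mathbold{a}),\quad \tfrac{\partial^2 f}{\partial x_i \partial x_j}(\mathbold{a}).
\]
It then suffices to establish the two-variable fact that a real multiaffine polynomial $\alpha + \beta u + \gamma v + \delta u v$ is stable iff $\beta \gamma - \alpha \delta \geq 0$. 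I would verify this by solving $\alpha + \beta u + \gamma v + \delta u v = 0$ for $u$ at fixed $v \in \mathcal{H}_+$, which yields $\Im(u) = (\alpha\delta - \beta\gamma)\Im(v)/|\beta + \delta v|^2$, so non-vanishing on $\mathcal{H}_+^2$ forces the required sign.

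\textbf{Reverse direction ($\Leftarrow$).} This is the main obstacle. I would induct on $n$; the case $n = 1$ is trivial since every real linear polynomial is real stable. For the inductive step, write $f = f_0 + x_n f_1$ with $f_0, f_1$ independent of $x_n$. For any real $a_n$ and any $i, j < n$, the hypothesis evaluated at $(a_1, \ldots, a_n)$ is exactly the Rayleigh inequality for the restriction $f|_{x_n = a_n}$, so induction yields that every real fiber in the last variable is real stable. The genuinely hard step is to promote this to stability of $f$ itself. I would argue by contradiction, supposing $f(\mathbold{z}) = 0$ for some $\mathbold{z} \in \mathcal{H}_+^n$, and continuously deforming $(z_1, \ldots, z_{n-1})$ inside $\mathcal{H}_+^{n-1}$ toward a real limit point while tracking the corresponding zero of $x_n \mapsto f(w_1, \ldots, w_{n-1}, x_n)$ via Hurwitz's theorem. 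The Rayleigh inequalities at index $n$ should obstruct this root from escaping $\overline{\mathcal{H}_+}$, forcing it to terminate as a real root of some real fiber and contradicting the induction hypothesis. Making this deformation rigorous---in particular, ruling out pole-type degenerations of the root function and carefully using the index-$n$ Rayleigh inequalities to pin down its imaginary part along the path---is where the technical heart of the argument lies, and it is the reason a standalone reference such as~\cite{Bra07} is needed.
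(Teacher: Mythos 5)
Your forward implication is correct and complete: specializing all variables except $x_i,x_j$ at the real values $a_k$ (Lemma~\ref{lem:stable}), shifting by the real vector $(a_i,a_j)$, and invoking the bivariate multiaffine criterion that $\alpha+\beta u+\gamma v+\delta uv$ is stable iff $\beta\gamma-\alpha\delta\ge 0$ is the standard argument, and your computation of $\Im(u)$ is right (only note the degenerate case $\beta+\delta v\equiv 0$, which forces $\beta=\delta=0$ and is harmless, and that for $i=j$ multiaffinity makes $\partial^2 f/\partial x_i^2=0$, so the quantity is just $(\partial f/\partial x_i(\mathbold{a}))^2$). Be aware, though, that the paper does not prove Theorem~\ref{thm:checkstable} at all: it is imported verbatim from Br\"and\'en~\cite{Bra07}, so the only question is whether your write-up is a self-contained proof. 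It is not, and the missing half is the substantive one.

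The genuine gap is the reverse implication. What your induction actually delivers is that every real specialization $f|_{x_n=a_n}$, $a_n\in\mathbb{R}$, is stable (using only the inequalities with $i,j<n$), and this is strictly weaker than stability of $f$: the polynomial $f=1+x_1x_2$ has every real fiber stable, yet $f(i,i)=0$; it of course violates the displayed inequality for $i=1$, $j=2$ (the quantity equals $x_1x_2-(1+x_1x_2)=-1$), which shows that any correct argument must exploit the index-$n$ inequalities at complex points in an essential way that your sketch never specifies. The Hurwitz/root-tracking step is exactly where this is hidden: writing $f=h+x_n g$ with $h=f|_{x_n=0}$ and $g=\partial f/\partial x_n$, the root $x_n=-h(w')/g(w')$ does vary continuously as $w'$ is deformed in $\mathcal{H}_+^{n-1}$ toward a real point, and its limit is real wherever $g\neq 0$, but continuity plus a real boundary value puts no constraint on the sign of its imaginary part at the starting point---the root may cross the real axis along the path, and ``the Rayleigh inequalities should obstruct this'' is an assertion rather than an argument, since those inequalities are hypotheses at real points only. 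Closing this requires a genuinely different ingredient (for instance a multivariate Hermite--Biehler/proper-position criterion for the pair $(g,h)$, or Br\"and\'en's own argument in~\cite{Bra07}); deferring to that reference, as you do, means the hard direction of the theorem is assumed rather than proved.
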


We note that most of these results have a complex counterpart, but for our purposes real stability
suffices---all the polynomials we consider have positive integer coefficients. For this
reason we will sometimes refer to real stable polynomials simply as stable polynomials.

\section{Stable $W$-Eulerian Polynomials}

For clarity and completeness, we begin with a proof of the stability of the multivariate Eulerian 
polynomial of type $A$ due to Br\"and\'en.

\subsection{Eulerian Polynomials of Type $A$}
\label{sec:typeA}

Let $A_n$ denote the Coxeter group of type $A$ of rank $n$.  We can regard $A_n$ as 
$\mathrm{Sym}(n+1)$, the group of all permutations on $[n+1]$ with generators 
$S = \{s_1, \ldots, s_{n}\}$, where $s_i$ is the transposition $(i,i+1)$ for $1 \leq i \leq n$.

\begin{prop}[Proposition 1.5.3 in~\cite{BB05}]
Given $\sigma=\sigma_1 \ldots \sigma_{n+1} \in A_n$ written in the one-line notation, the 
\emph{descent} set of $\sigma$ is
 \begin{equation*}
  \mathcal{D}_A(\sigma) = \left\{i \in [n] : \sigma_i > \sigma_{i+1}\right\}.
 \end{equation*}
\end{prop}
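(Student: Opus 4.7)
The plan is to prove this by a direct comparison of the Coxeter-theoretic descent condition $\ell_{A}(\sigma s_i) < \ell_{A}(\sigma)$ with the inversion-based condition $\sigma_i > \sigma_{i+1}$. The main ingredient is the standard fact, which I would cite from \cite{BB05}, that in the symmetric group the Coxeter length with respect to the generating set $S=\{s_1,\ldots,s_n\}$ coincides with the number of inversions,
\[
\ell_A(\sigma) = \mathrm{inv}(\sigma) := \bigl|\{(j,k) : 1 \le j < k \le n+1,\; \sigma_j > \sigma_k\}\bigr|.
\]

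First, I would fix the convention that $\sigma \in A_n$ acts on positions from the right, so that right multiplication by $s_i=(i,i+1)$ permutes the positions $i$ and $i+1$. Concretely, if $\sigma = \sigma_1 \sigma_2 \cdots \sigma_{n+1}$ in one-line notation, then $\sigma s_i$ has one-line notation $\sigma_1 \cdots \sigma_{i-1} \sigma_{i+1} \sigma_i \sigma_{i+2} \cdots \sigma_{n+1}$. This is the computation the whole proof hinges on, and I would present it explicitly rather than by reference.

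Next, I would compare inversions of $\sigma$ and $\sigma s_i$ pair by pair. For any pair of positions $(j,k)$ with $\{j,k\} \cap \{i,i+1\} = \emptyset$, the entries in those positions are unchanged, so the pair contributes to an inversion in $\sigma$ iff it does in $\sigma s_i$. For a pair where exactly one of $j,k$ is in $\{i,i+1\}$, say $j < i$ and $k \in \{i,i+1\}$, the two pairs $(j,i)$ and $(j,i+1)$ collectively compare the fixed value $\sigma_j$ to the multiset $\{\sigma_i,\sigma_{i+1}\}$, which is the same multiset under the swap; hence the total count of inversions from such pairs is preserved, and similarly for $j \in \{i,i+1\}$, $k > i+1$. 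The only pair that changes status is $(i,i+1)$ itself: it is an inversion of $\sigma$ precisely when $\sigma_i > \sigma_{i+1}$, and of $\sigma s_i$ precisely when $\sigma_i < \sigma_{i+1}$. Therefore
\[
\ell_A(\sigma s_i) - \ell_A(\sigma) = \begin{cases} -1 & \text{if } \sigma_i > \sigma_{i+1}, \\ +1 & \text{if } \sigma_i < \sigma_{i+1}, \end{cases}
\]
so $i \in \mathcal{D}_A(\sigma) \iff \sigma_i > \sigma_{i+1}$, which is what was to be shown.

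There is essentially no deep obstacle here; the only thing to be careful about is the convention (left vs.\ right action) that makes right multiplication by $s_i$ swap \emph{positions} $i$ and $i+1$ rather than the \emph{values} $i$ and $i+1$. Getting that convention right is what aligns the Coxeter-theoretic definition with the familiar combinatorial notion of a descent; once it is fixed, the bookkeeping of inversions above completes the argument.
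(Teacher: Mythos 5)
Your proof is correct: the identification of Coxeter length with inversion number, the observation that right multiplication by $s_i$ swaps positions $i$ and $i+1$, and the pairwise inversion count showing $\ell_A(\sigma s_i)=\ell_A(\sigma)\pm 1$ according to the sign of $\sigma_i-\sigma_{i+1}$ is exactly the standard argument. The paper itself gives no proof, citing Proposition~1.5.3 of \cite{BB05}, and your argument is essentially the one found there, so there is nothing further to reconcile.
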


The following theorem is well-known. Frobenius already mentioned that it
follows from the recurrence these polynomials satisfy:
\[ A_n(x) = (n+1)xA_{n-1}(x) + (1-x)\left(xA_{n-1}(x)\right)'\,.
\] 

\begin{thm}[p. 829 of~\cite{Fro10}]
\label{thm:typeAreal}
 \begin{equation}
  A_n (x) = \sum_{\sigma \in A_n} x^{|\mathcal{D}_A (\sigma)|}
  \label{eq:An}
 \end{equation}
 has only real roots.
\end{thm}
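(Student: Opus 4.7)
The plan is to apply the inductive template from Section~\ref{sec:stable}: find a real stable multivariate refinement $A_n(\mathbold{x},\mathbold{y})$ of $A_n(x)$ and then specialize. The naive \emph{positional} refinement $\sum_\sigma \mathbold{x}^{\mathcal{D}_A(\sigma)}\mathbold{y}^{[n]\setminus\mathcal{D}_A(\sigma)}$ already fails at $n=2$: Theorem~\ref{thm:checkstable} applied to the pair $(x_1, x_2)$ yields $3y_1y_2$, which changes sign. I would therefore use the \emph{per-letter} refinement
\[
A_n(\mathbold{x},\mathbold{y}) = \sum_{\sigma \in \mathrm{Sym}(n+1)} \prod_{i=1}^n \left( x_{\sigma_i}^{[\sigma_i < \sigma_{i+1}]} \, y_{\sigma_i}^{[\sigma_i > \sigma_{i+1}]} \right),
\]
multiaffine in $x_1,\dots,x_{n+1},y_1,\dots,y_{n+1}$ and recovering $A_n(x)$ under $x_k = 1$, $y_k = x$; direct computation verifies the discriminant test of Theorem~\ref{thm:checkstable} for small $n$.

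A recurrence $A_n = T_n(A_{n-1})$ is then obtained by inserting the new maximum letter $n+1$ into a permutation $\tau\in\mathrm{Sym}(n)$ at each of its $n+1$ available positions. A case analysis shows that front insertion multiplies the weight of $\tau$ by $y_{n+1}$, back insertion multiplies it by $x_{\tau_n}$, and each of the $n-1$ interior insertions multiplies by $y_{n+1}$ and converts whichever of $x_k, y_k$ is present at a given position of $\tau$ into $x_k$. Packaging these contributions gives
\[
T_n \;=\; y_{n+1}\Bigl( 1 + \sum_{k=1}^{n} x_k (\partial_{x_k} + \partial_{y_k}) \Bigr) + \sum_{k=1}^n x_k\, \pi_k,
\]
where $\pi_k(f) := f|_{x_k=0,\,y_k=0}$ projects onto monomials missing the letter $k$ (this accounts for the ``missing-letter'' dependence of $x_{\tau_n}$).

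The induction then applies Theorem~\ref{thm:stable} to $T_n$: treating the input variable set $(x_1,\dots,x_n,y_1,\dots,y_n)$ as the ``$\mathbold{x}$'' of that theorem, one verifies that $T_n$ applied to the corresponding universal multiaffine polynomial $\prod_{i=1}^n (x_i+u_i)(y_i+v_i)$ is real stable in all $4n$ variables. Given the stable base case $A_0(\mathbold{x},\mathbold{y})=1$, this delivers $A_n(\mathbold{x},\mathbold{y}) \in \rstable[\mathbold{x},\mathbold{y}]$ by induction, and the specialization and diagonalization operations of Lemma~\ref{lem:stable} ($y_k \mapsto 1$, $x_k \mapsto x$) conclude Theorem~\ref{thm:typeAreal}.

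The main obstacle is the Wagner-test verification for the projection piece $\sum_k x_k\,\pi_k$: $\pi_k$ is not a standard multiplication/differentiation operator, and its image on the universal multiaffine polynomial is combinatorially heavy, with stability not manifest. Resolving this will likely require either rewriting $\pi_k$ via the multiaffine identity $\pi_k = (1-x_k\partial_{x_k})(1-y_k\partial_{y_k})$ and decomposing $T_n$ into simpler stability-preserving building blocks, or reformulating the refinement so that its natural recurrence operator admits a transparent stability check.
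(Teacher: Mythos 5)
Your overall strategy is the right one---find a stable multiaffine refinement and a stability-preserving recurrence operator---but the specific refinement you chose makes the recurrence hard to control, and you have (correctly) flagged exactly where it stalls. The gap is genuine, and the missing ingredient is a different choice of statistic.

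The obstruction comes from indexing each pair $(\sigma_i,\sigma_{i+1})$ by its \emph{left} entry $\sigma_i$. Under that convention, inserting $n+1$ at the back produces the pair $(\tau_n,n+1)$ whose weight is $x_{\tau_n}$, a variable that depends on which letter happens to be last. This is precisely what forces the projection term $\sum_k x_k\pi_k$ into your operator $T_n$, and the image of that term on $\prod_{i=1}^n(x_i+u_i)(y_i+v_i)$ does not factor into the ``constant plus half-plane sum'' shape that makes the Borcea--Br\"and\'en/Wagner test (Theorem~\ref{thm:stable}) easy to apply. Without that verification, the induction does not close, and you have not established stability for general $n$.

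The paper sidesteps this completely by indexing each pair by its \emph{maximum}, i.e.\ by the descent-top and ascent-top sets $\mathcal{DT}_A(\sigma)=\{\max(\sigma_i,\sigma_{i+1}):\sigma_i>\sigma_{i+1}\}$ and $\mathcal{AT}_A(\sigma)=\{\max(\sigma_i,\sigma_{i+1}):\sigma_i<\sigma_{i+1}\}$. Since the newly inserted letter $n+1$ is the maximum of every pair it participates in, \emph{all three} insertion types contribute only $x_{n+1}$ or $y_{n+1}$: front insertion gives $x_{n+1}$, back insertion gives $y_{n+1}$, and each interior insertion destroys one existing descent/ascent (removing one factor via $\bpartial$) while creating two new pairs with top $n+1$, contributing $x_{n+1}y_{n+1}$. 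The resulting recurrence
\[
A_{n}(\mathbold{x},\mathbold{y})=(x_{n+1}+y_{n+1})A_{n-1}(\mathbold{x},\mathbold{y})+x_{n+1}y_{n+1}\,\bpartial\, A_{n-1}(\mathbold{x},\mathbold{y})
\]
has no projection term, and its operator applied to $(\mathbold{x}+\mathbold{u})^{[n]}(\mathbold{y}+\mathbold{v})^{[n]}$ factors as $x_{n+1}y_{n+1}$ times a sum of reciprocals of upper-half-plane quantities times the original product, which is manifestly stable. This is exactly the ``reformulation so that the operator admits a transparent stability check'' that you were looking for. Switching your statistic from the left entry to the maximum entry of each pair---and noting that $A_0=1$ settles the base case---gives the complete argument.
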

Theorem~\ref{thm:typeAreal} can also be proven using Rolle's theorem (see proof of Theorem 1.34 in \cite{Bon12}).

\begin{defn} 
 Given $\sigma \in A_n$, define the type $A$ \emph{descent top} set to be
 \[
  \mathcal{DT}_A (\sigma) = \{\max(\sigma_i,\sigma_{i+1}) : 1 \leq i \leq n, \sigma_i > \sigma_{i+1}\},
 \]
 and similarly, let the type $A$ \emph{ascent top} set be
 \[
  \mathcal{AT}_A (\sigma) = \{\max(\sigma_i,\sigma_{i+1}): 1 \leq i \leq n, \sigma_i<\sigma_{i+1}\}.
 \]
\end{defn}

For example, when $\sigma = 3 1 4 5 2 \in A_4$, $\mathcal{DT}_A(\sigma) = \{3,5\} $ and $\mathcal{AT}_A(\sigma) = \{4,5\}.$ 
Note that the seemingly superfluous notation $\max(\sigma_i,\sigma_{i+1})$ simply reduces to $\sigma_i$ and $\sigma_{i+1}$ 
in the case of type $A$ descent top and ascent top sets, respectively. Its significance will become apparent when we introduce 
the type $B$ descent top and ascent top sets.

\begin{thm}[Br\"and\'en~\cite{Bra10}]
\label{thm:typeA}
 \begin{equation}
 \label{eq:typeApoly}
 A_n (\mathbold{x},\mathbold{y}) = \sum_{\sigma \in A_n} \mathbold{x}^{\mathcal{DT}_A (\sigma)} \mathbold{y}^{\mathcal{AT}_A (\sigma)}
 \end{equation}
 is stable.
\end{thm}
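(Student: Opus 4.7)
\medskip

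\noindent\textbf{Proof plan.} I would follow the inductive template from Section~\ref{sec:prelim}: exhibit a linear operator $T_n$ with $A_n = T_n(A_{n-1})$, show $T_n$ is stability-preserving via Theorem~\ref{thm:stable}, and conclude by induction from the trivially stable base case $A_0 = 1$. A preliminary observation is that $A_{n-1}$ is multiaffine, since any value in a permutation is the max of at most one left-adjacent pair (contributing an ascent top) and at most one right-adjacent pair (contributing a descent top); thus each variable appears to at most the first power in any monomial and Theorem~\ref{thm:stable} can be applied.

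To find the recurrence I would reformulate the statistic per pair: each consecutive pair $(\sigma_i,\sigma_{i+1})$ of $\sigma \in A_n$ contributes $x_{\max(\sigma_i,\sigma_{i+1})}$ if descending and $y_{\max(\sigma_i,\sigma_{i+1})}$ if ascending, and the product over all pairs coincides with $\mathbold{x}^{\mathcal{DT}_A(\sigma)}\mathbold{y}^{\mathcal{AT}_A(\sigma)}$. I then analyze the insertion of the largest letter $n+1$ into $\sigma \in A_{n-1}$: at the left end it creates a descent pair contributing $x_{n+1}$; at the right end, an ascent pair contributing $y_{n+1}$; at any interior position, the pair originally there is split into an ascent and a descent both having $n+1$ as their max, so the single variable that pair contributed is replaced by $x_{n+1}y_{n+1}$. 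Summing over the $n+1$ insertion sites yields
\begin{equation*}
A_n(\mathbold{x},\mathbold{y}) \;=\; (x_{n+1}+y_{n+1})\,A_{n-1}(\mathbold{x},\mathbold{y}) \;+\; x_{n+1}y_{n+1}\,\bpartial A_{n-1}(\mathbold{x},\mathbold{y}),
\end{equation*}
where $\bpartial$ sums over the old variables $x_1,\dots,x_n,y_1,\dots,y_n$.

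Setting $T_n : f \mapsto (x_{n+1}+y_{n+1})f + x_{n+1}y_{n+1}\bpartial f$, Theorem~\ref{thm:stable} reduces stability-preservation to checking that $T_n\bigl((\mathbold{x}+\mathbold{y})^{[n]}\bigr) \in \rstable[\mathbold{x},\mathbold{y}]$. Direct expansion factors this polynomial as $\prod_{i=1}^n(x_i+y_i)$ times the bracket $(x_{n+1}+y_{n+1}) + 2x_{n+1}y_{n+1}\sum_{i=1}^n(x_i+y_i)^{-1}$. On $\mathcal{H}_+^{2n+2}$ the product is nonzero since $\Im(x_i+y_i)>0$; dividing the bracketed factor by the nonzero $x_{n+1}y_{n+1}$ yields $\frac{1}{x_{n+1}}+\frac{1}{y_{n+1}}+2\sum_i\frac{1}{x_i+y_i}$, a sum of elements of $\mathcal{H}_-$ (since reciprocals of upper-half-plane numbers lie in the lower half-plane) and hence nonzero. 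Thus $T_n$ preserves real stability of multiaffine polynomials, and induction closes.

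The main obstacle is identifying a clean, stability-preserving recurrence in the first place. The classical univariate recurrence $A_n(x)=(n+1)xA_{n-1}(x)+(1-x)(xA_{n-1}(x))'$ mixes signs and is not manifestly induced by a stability-preserving operator; refining the descent statistic into two families of variables indexed by descent and ascent tops is exactly what allows the $n+1$ insertion cases to telescope into the three-term template $(x_{n+1}+y_{n+1})f + x_{n+1}y_{n+1}\bpartial f$. Once this recurrence is pinned down, the half-plane calculation verifying the hypothesis of Theorem~\ref{thm:stable} is short and mechanical.
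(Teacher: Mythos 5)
Your overall strategy is exactly the paper's: the same insertion recurrence $A_n=(x_{n+1}+y_{n+1})A_{n-1}+x_{n+1}y_{n+1}\bpartial A_{n-1}$, the same multiaffinity observation, and the same appeal to Theorem~\ref{thm:stable} plus the upper/lower half-plane computation. The recurrence derivation and the base case are correct.

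There is, however, one genuine flaw in how you invoke Theorem~\ref{thm:stable}. In that theorem the symbol "$\mathbold{x}$'' stands for \emph{all} variables the operator acts on, and "$\mathbold{y}$'' for an equally large set of \emph{fresh} doubling variables. Your operator $T_n$ acts on the $2n$ variables $x_1,\dots,x_n,y_1,\dots,y_n$, so the polynomial you must test is $T_n\bigl((\mathbold{x}+\mathbold{u})^{[n]}(\mathbold{y}+\mathbold{v})^{[n]}\bigr)$ with new indeterminates $\mathbold{u},\mathbold{v}$, and stability must hold jointly in $\mathbold{x},\mathbold{y},\mathbold{u},\mathbold{v},x_{n+1},y_{n+1}$. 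You instead computed $T_n\bigl((\mathbold{x}+\mathbold{y})^{[n]}\bigr)$, i.e.\ you paired each $x_i$ with $y_i$ --- a variable that $\bpartial$ itself differentiates (hence the factor $2$ in your bracket). Showing that $T_n$ sends this one particular stable multiaffine polynomial to a stable polynomial does not certify that $T_n$ preserves stability; your polynomial is only the specialization $u_i\mapsto y_i$, $v_i\mapsto x_i$ of the true symbol, so its stability is a consequence of, not a substitute for, the required check. The repair is immediate and is precisely what the paper does: with fresh $\mathbold{u},\mathbold{v}$ the symbol factors as $x_{n+1}y_{n+1}\bigl[\tfrac{1}{x_{n+1}}+\tfrac{1}{y_{n+1}}+\sum_{i=1}^n\bigl(\tfrac{1}{x_i+u_i}+\tfrac{1}{y_i+v_i}\bigr)\bigr](\mathbold{x}+\mathbold{u})^{[n]}(\mathbold{y}+\mathbold{v})^{[n]}$, and your half-plane argument applies verbatim, since each reciprocal lies in $\mathcal{H}_-$ when all variables lie in $\mathcal{H}_+$.
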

\begin{proof}
We proceed by induction.  Note that $A_0(x_1,y_1) = 1$ is stable.  By observing the effect of inserting $n+1$ into a 
permutation $\sigma \in A_{n-1}$ on the type $A$ ascent top and descent top sets, we obtain the following recursion. For $n>0$, we have
 
 \begin{equation}
  \label{eq:recurrencetypeA} A_{n}(\mathbold{x},\mathbold{y})=(x_{n+1}+y_{n+1})A_{n-1}(\mathbold{x},\mathbold{y})+x_{n+1}y_{n+1}\bpartial A_{n-1}(\mathbold{x},\mathbold{y}).
 \end{equation}

We remind the reader here that 
$\bpartial = \sum_{i=1}^{n} \left( \frac{\partial}{\partial x_i}+\frac{\partial}{\partial y_i} \right).$  
It is easy to check using Theorem~\ref{thm:stable} that the linear operator 
$T=(x_{n+1} + y_{n+1})+x_{n+1}y_{n+1}\bpartial$ is stability-preserving, because 
\begin{align*}
T((\mathbold{x}+\mathbold{u})^{[n]} &(\mathbold{y}+\mathbold{v})^{[n]}) = \\ 
&x_{n+1}y_{n+1} \underbrace{\left[\frac{1}{y_{n+1}}+\frac{1}{x_{n+1}}+ \sum_{i=1}^n \left(\frac{1}{x_i+u_i}+\frac{1}{y_i+v_i}\right)\right]}_{\text{In } \mathcal{H}_- \text{ when } \mathbold{x,y,u,v}\in\mathcal{H}^+}
(\mathbold{x}+\mathbold{u})^{[n]} (\mathbold{y}+\mathbold{v})^{[n]}
\end{align*}
is in $\mathfrak{S}_\mathbb{R}[\mathbold{x},\mathbold{y},\mathbold{u},\mathbold{v}]$. The result follows.
\end{proof}

Specializing the $y_i$ variables to $1$, it follows that

\begin{cor}
\[A_n(\mathbold{x}) = \sum_{\sigma \in A_n}\mathbold{x}^{\mathcal{DT}_A (\sigma)}\] is stable.
\end{cor}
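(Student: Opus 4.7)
The plan is to deduce this directly from Theorem~\ref{thm:typeA} by specialization, using Lemma~\ref{lem:stable}. Since stability of a polynomial is preserved under setting any variable equal to a real number, and the bivariate polynomial $A_n(\mathbold{x},\mathbold{y})$ is already known to be stable, the univariate polynomial should fall out as a specialization.

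Concretely, I would start from the stable polynomial
\[
A_n(\mathbold{x},\mathbold{y}) = \sum_{\sigma \in A_n} \mathbold{x}^{\mathcal{DT}_A(\sigma)} \mathbold{y}^{\mathcal{AT}_A(\sigma)} \in \rstable[\mathbold{x},\mathbold{y}]
\]
and apply Lemma~\ref{lem:stable}(3) repeatedly, specializing $y_1 = y_2 = \cdots = y_{n+1} = 1$. Each individual specialization at the real value $1$ preserves stability, so after all $n+1$ such substitutions the resulting polynomial in $\mathbold{x}$ alone is still stable. Since $\mathbold{y}^{\mathcal{AT}_A(\sigma)}|_{y_i=1 \text{ for all } i} = 1$ for every $\sigma$, the specialization produces exactly
\[
A_n(\mathbold{x}) = \sum_{\sigma \in A_n} \mathbold{x}^{\mathcal{DT}_A(\sigma)},
\]
which is therefore stable.

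There is essentially no obstacle here; the only thing to be careful about is that Lemma~\ref{lem:stable}(3) requires specialization at a \emph{real} value, and $1 \in \mathbb{R}$ of course qualifies. One could equivalently verify that the univariate recurrence obtained from~\eqref{eq:recurrencetypeA} by setting $y_i = 1$ is preserved by a stability-preserving operator, but invoking the lemma is cleaner and is precisely the kind of reduction highlighted in the preliminaries.
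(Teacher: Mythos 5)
Your proof is correct and matches the paper's argument exactly: the corollary is obtained from Theorem~\ref{thm:typeA} by specializing each $y_i$ to the real value $1$, which preserves stability by Lemma~\ref{lem:stable}(3). Nothing more is needed.
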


Diagonalizing $\mathbold{x}$, we obtain

\begin{cor}
\[A_n(x) = \sum_{\sigma \in A_n} x^{|\mathcal{DT}_A (\sigma)|} = \sum_{\sigma \in A_n} x^{|\mathcal{D}_A (\sigma)|}\] is stable.
\end{cor}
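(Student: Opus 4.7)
The plan is to prove the corollary in two parts: first establish the combinatorial identity between the two sums, then deduce stability from the preceding multivariate corollary.

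For the combinatorial equality $\sum_{\sigma \in A_n} x^{|\mathcal{DT}_A(\sigma)|} = \sum_{\sigma \in A_n} x^{|\mathcal{D}_A(\sigma)|}$, I would argue that for each fixed $\sigma$ the two exponents already agree, i.e.\ $|\mathcal{DT}_A(\sigma)| = |\mathcal{D}_A(\sigma)|$. There is a natural map from $\mathcal{D}_A(\sigma)$ to $\mathcal{DT}_A(\sigma)$ sending a descent position $i$ to $\max(\sigma_i,\sigma_{i+1}) = \sigma_i$. This map is surjective by definition, and it is injective because $\sigma$ is a permutation, so two distinct positions produce two distinct values. Hence it is a bijection and the two cardinalities coincide.

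For the stability part, I would invoke the preceding corollary, which states that the multivariate polynomial $A_n(\mathbold{x}) = \sum_{\sigma \in A_n} \mathbold{x}^{\mathcal{DT}_A(\sigma)}$ lies in $\rstable[\mathbold{x}]$. By the diagonalization clause of Lemma~\ref{lem:stable}, the substitution $x_i \mapsto x_j$ preserves real stability; iterating this substitution to identify all of $x_1,\dots,x_{n+1}$ with a single variable $x$ yields a stable polynomial in $\mathbb{R}[x]$, which is exactly the univariate polynomial of the statement. Since a univariate real polynomial is stable if and only if it has only real roots, the corollary (and Theorem~\ref{thm:typeAreal}) follows.

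I do not expect any genuine obstacle here: the combinatorial step is a one-line bijection and the analytic step is a direct application of Lemma~\ref{lem:stable} to the previous corollary. The only minor care needed is to iterate the diagonalization operation over all $n+1$ variables rather than applying it only once, which is immediate since each individual diagonalization preserves stability.
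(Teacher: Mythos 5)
Your proposal is correct and follows the paper's own route exactly: the paper derives this corollary by diagonalizing $\mathbold{x}$ in the immediately preceding corollary, using the diagonalization clause of Lemma~\ref{lem:stable}, and then notes that stability of a univariate real polynomial is the same as real-rootedness. The only addition you make is to spell out the bijection $i \mapsto \sigma_i$ showing $|\mathcal{DT}_A(\sigma)| = |\mathcal{D}_A(\sigma)|$, which the paper leaves implicit; that observation is correct and harmless.
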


Since $A_n(x)$ is univariate, this corollary is equivalent to the statement that $A_n(x)$ has only real roots (Theorem~\ref{thm:typeAreal}).

We refer to~\cite{HV11} for a proof of the stability of a (slightly different) 
multivariate refinement of the \emph{classical} Eulerian polynomials.  
That refinement has close connections with the affine Eulerian polynomial of 
type $C$, which we will address in more detail in 
Section~\ref{sec:typetildeC}.

\medskip
Next, we present our results. We start by defining the multivariate Eulerian 
polynomial of type $B$ and proving that it is stable.

\subsection{Eulerian Polynomials of Type $B$}
\label{sec:typeB}

Let $B_n$ denote the Coxeter group of type $B$ of rank $n$.  We regard $B_n$ 
as the group of all signed permutations on $[\pm n] = \{-n, \dots, -1, 1, \dots, n\}$ with generators 
$S = \{s_0, s_1, \ldots, s_{n-1}\}$, where $s_0$ is the transposition $\left(-1, 1\right)$ and 
$s_i = (i,i+1)$ for $1 \leq i \leq n-1$.  

Type $B$ descents have a simple combinatorial description that we will exploit.

\begin{prop}[Corollary 3.2 of~\cite{Bre94}, also Proposition 8.1.2 of~\cite{BB05}]
  \label{prop:typeBdescents}
 Given a signed permutation $\sigma = (\sigma_1, \dots, \sigma_n)\in B_n$, written in its window-notation, let
 \begin{equation*}
  \mathcal{D}_B(\sigma) = \left\{i \in [n] : \sigma_{i-1} > \sigma_{i}\right\},
 \end{equation*}
 where $\sigma_0 \buildrel\mathrm{def}\over= 0$.
\end{prop}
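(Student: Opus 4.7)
Since this proposition is quoted from \cite{BB05}, the paper itself does not reprove it; I will outline how one would establish it from first principles. The plan is to compare the abstract Coxeter-theoretic definition of a descent (length decrease under right multiplication by a generator) with the explicit effect of each generator of $B_n$ on the window notation of $\sigma$, using a standard combinatorial length formula.

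Identifying $B_n$ with signed bijections of $[\pm n]$ satisfying $\sigma(-i) = -\sigma(i)$, the right action of the generators is transparent: $\sigma s_0$ negates the first window entry, giving $(-\sigma_1, \sigma_2, \ldots, \sigma_n)$, while $\sigma s_j$ for $1 \le j \le n-1$ swaps the entries in positions $j$ and $j+1$. I would then invoke a standard length formula such as
$$\ell_B(\sigma) = \mathrm{inv}(\sigma) + \mathrm{nsp}(\sigma) + \mathrm{neg}(\sigma),$$
where $\mathrm{inv}$ is the usual window inversion count, $\mathrm{nsp}(\sigma) = |\{i < j : \sigma_i + \sigma_j < 0\}|$, and $\mathrm{neg}(\sigma) = |\{i : \sigma_i < 0\}|$ (Proposition 8.1.1 of \cite{BB05}). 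For $s_j$ with $j \ge 1$, the swap affects only the pair $(\sigma_j, \sigma_{j+1})$, changing $\mathrm{inv}$ by $\pm 1$ while leaving the other statistics untouched, so $\ell_B$ decreases iff $\sigma_j > \sigma_{j+1}$. For $s_0$, flipping the sign of $\sigma_1$ changes $\mathrm{neg}$ by $\pm 1$, and a short pair-by-pair check shows the combined contribution to $\mathrm{inv} + \mathrm{nsp}$ coming from pairs involving position $1$ telescopes to zero; hence $\ell_B$ decreases iff $\sigma_1 < 0$.

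The conceptual content of the statement---and the only mildly clever ingredient---is that these two rather different cases are unified by the convention $\sigma_0 := 0$, under which the inequality $\sigma_0 > \sigma_1$ reads exactly as the $s_0$-descent criterion $\sigma_1 < 0$. A second, purely clerical, item is the index shift between the generator labels $\{s_0, \ldots, s_{n-1}\}$ and the descent labels $i \in [n]$ in the statement ($i = 1$ corresponds to $s_0$, and $i$ to $s_{i-1}$ for $i \ge 2$). I do not foresee any real obstacle beyond these bookkeeping points; alternatively one could give a uniform proof via the general Coxeter-theoretic fact that $s$ is a right descent of $\sigma$ iff $\sigma(\alpha_s)$ is a negative root, and read the condition off the standard realization of the type $B$ root system.
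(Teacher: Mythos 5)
Your outline is correct: the paper does not prove this proposition (it is quoted from Brenti and from Bj\"orner--Brenti), and your argument via the window action of the generators together with the length formula $\ell_B = \mathrm{inv} + \mathrm{neg} + \mathrm{nsp}$ is exactly the standard proof in the cited sources, with the $s_0$ case and the index shift between $\{s_0,\dots,s_{n-1}\}$ and $i\in[n]$ handled correctly. No gaps.
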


Analogously to type $A$, the type $B$ Eulerian polynomials have only real roots.

\begin{thm}[Brenti \cite{Bre94}]
\label{thm:typeBreal}
 \begin{equation}
  B_n (x) = \sum_{\sigma \in B_n} x^{|\mathcal{D}_B (\sigma)|}
 \end{equation}
 has only real roots.
\end{thm}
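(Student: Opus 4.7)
The plan is to follow the template established in Section~\ref{sec:typeA}: introduce a multivariate refinement of $B_n(x)$ that tracks type $B$ descent tops and ascent tops, prove it is real stable via a stability-preserving recurrence, and then recover real-rootedness by specialization and diagonalization.  Accordingly, I would define
\[
  \mathcal{DT}_B(\sigma) = \{\max(|\sigma_{i-1}|,|\sigma_i|) : 1 \leq i \leq n, \sigma_{i-1} > \sigma_i\}
\]
and analogously $\mathcal{AT}_B(\sigma)$ (with $\sigma_0 \buildrel\mathrm{def}\over= 0$ as in Proposition~\ref{prop:typeBdescents}), and set
\[
  B_n(\mathbold{x},\mathbold{y}) = \sum_{\sigma \in B_n} \mathbold{x}^{\mathcal{DT}_B(\sigma)} \mathbold{y}^{\mathcal{AT}_B(\sigma)},
\]
a multiaffine polynomial in $2n$ variables.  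A short case analysis shows that each $m \in [n]$ is the top of at most one descent and at most one ascent of $\sigma$, so in particular $|\mathcal{DT}_B(\sigma)| = |\mathcal{D}_B(\sigma)|$; specializing $y_i = 1$ and then diagonalizing $x_1 = \cdots = x_n = x$ therefore recovers $B_n(x)$ via Lemma~\ref{lem:stable}, and it suffices to prove that $B_n(\mathbold{x},\mathbold{y})$ is real stable.

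Next, I would derive the key recursion by analyzing how inserting $\pm n$ into $\sigma \in B_{n-1}$ affects the descent/ascent top sets.  Because $|{\pm n}|$ strictly exceeds every $|\sigma_i|$, the effect is uniform: appending $n$ (resp.\ $-n$) adds a single ascent (resp.\ descent) of top $n$, contributing $(x_n+y_n)B_{n-1}(\mathbold{x},\mathbold{y})$; inserting $\pm n$ at an interior slot $k \in \{1,\dots,n-1\}$ instead destroys the old comparison at slot $k$ (whose top is some $m \in [n-1]$) and creates a consecutive ascent--descent pair of top $n$, whose order depends on the sign, together contributing a factor of $2\,x_n y_n$.  The uniqueness property above ensures that distinct interior slots contribute distinct partial derivatives on the multiaffine monomial $M_\sigma = \mathbold{x}^{\mathcal{DT}_B(\sigma)}\mathbold{y}^{\mathcal{AT}_B(\sigma)}$---each descent slot with top $m$ yielding $\partial/\partial x_m$ and each ascent slot with top $m$ yielding $\partial/\partial y_m$---so their sum over $k$ equals $\bpartial M_\sigma$.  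Summing over $\sigma \in B_{n-1}$ gives
\[
  B_n(\mathbold{x},\mathbold{y}) = (x_n + y_n)\,B_{n-1}(\mathbold{x},\mathbold{y}) + 2\,x_n y_n\,\bpartial B_{n-1}(\mathbold{x},\mathbold{y}),
\]
where $\bpartial = \sum_{i=1}^{n-1}(\partial/\partial x_i + \partial/\partial y_i)$.

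I would then apply Theorem~\ref{thm:stable} to the linear operator $T = (x_n + y_n) + 2\,x_n y_n\,\bpartial$.  Mirroring the type $A$ computation,
\[
  T((\mathbold{x}+\mathbold{u})^{[n-1]}(\mathbold{y}+\mathbold{v})^{[n-1]}) = x_n y_n\,(\mathbold{x}+\mathbold{u})^{[n-1]}(\mathbold{y}+\mathbold{v})^{[n-1]}\left[\frac{1}{x_n} + \frac{1}{y_n} + 2\sum_{i=1}^{n-1}\left(\frac{1}{x_i+u_i} + \frac{1}{y_i+v_i}\right)\right],
\]
and when all arguments lie in $\mathcal{H}_+$ each inverse inside the bracket lies in $\mathcal{H}_-$, so the bracket (a positive combination of $\mathcal{H}_-$ elements) lies in $\mathcal{H}_-$ and the entire product is nonzero.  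Hence $T$ is stability-preserving; together with the trivially stable base case $B_1(\mathbold{x},\mathbold{y}) = x_1 + y_1$, induction yields $B_n(\mathbold{x},\mathbold{y}) \in \rstable[\mathbold{x},\mathbold{y}]$, and the specialization above completes the proof.

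The main obstacle I anticipate is the combinatorial bookkeeping needed to justify the recurrence: one must verify the uniqueness of descent and ascent tops so that the sum of slot-wise deletions over a single $\sigma$ factors cleanly as $\bpartial M_\sigma$, and one must confirm that the two sign choices at each interior slot truly contribute the same monomial factor $x_n y_n$.  Once these points are nailed down, the type $B$ operator differs from its type $A$ counterpart in Theorem~\ref{thm:typeA} only by the harmless factor of $2$, and the remaining stability verification is essentially identical.
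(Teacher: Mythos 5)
Your proposal is correct and follows essentially the same route as the paper: the paper proves stability of $B_n(\mathbold{x},\mathbold{y};q)$ via exactly the recurrence you derive and the operator-theoretic stability check, then recovers Theorem~\ref{thm:typeBreal} by specialization and diagonalization, and your argument is precisely the $q=1$ instance of that proof.
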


In~\cite{Bre94}, Brenti also introduced a ``$q$-analog'' of the 
(univariate) type $B$ Eulerian polynomials using the following signed permutation statistic.
For $\sigma \in B_n$, let
\begin{equation*}
 N(\sigma) = \left|\left\{i \in [n]: \sigma_i < 0\right\}\right|
\end{equation*}
denote the number of negative entries in the signed permutation $\sigma$.

\begin{thm}[Corollary 3.7 of \cite{Bre94}]
  \label{thm:Brenti}
  For $q\ge 0$,
   \begin{equation}
   \label{eq:typeBpoly}
    B_n(x;q) = \sum_{\sigma \in B_n} q^{N(\sigma)}x^{|\mathcal{D}_B(\sigma)|}.
   \end{equation}
   has only real roots.
\end{thm}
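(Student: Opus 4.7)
The plan is to follow the template of Theorem~\ref{thm:typeA}: produce a multivariate refinement $B_n(\mathbold{x}, \mathbold{y}; q)$ of $B_n(x; q)$, show it is real stable in $\mathbold{x}, \mathbold{y}$ (with $q \geq 0$ treated as a non-negative parameter), and then recover Theorem~\ref{thm:Brenti} by specializing $\mathbold{y} = \mathbold{1}$ and diagonalizing $\mathbold{x} \to x$ via Lemma~\ref{lem:stable}. Since $B_n(x;q)$ is univariate, stability is equivalent to real-rootedness.

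First, by analogy with type $A$ (and using the convention $\sigma_0 = 0$), I would define type $B$ descent-top and ascent-top multisets
\[
\mathcal{DT}_B(\sigma) = \{\max(\sigma_{i-1},\sigma_i) : i \in [n],\ \sigma_{i-1}>\sigma_i\},\quad \mathcal{AT}_B(\sigma) = \{\max(\sigma_{i-1},\sigma_i) : i \in [n],\ \sigma_{i-1}<\sigma_i\},
\]
and set $B_n(\mathbold{x},\mathbold{y};q) = \sum_{\sigma \in B_n} q^{N(\sigma)}\mathbold{x}^{\mathcal{DT}_B(\sigma)}\mathbold{y}^{\mathcal{AT}_B(\sigma)}$, where the variables $\mathbold{x},\mathbold{y}$ are indexed over the range of possible tops (including the boundary value $0$ coming from the pair $(0,\sigma_1)$ when $\sigma_1<0$, which has no analog in type $A$). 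This is why the paper flagged the seemingly redundant $\max$ in the type $A$ definitions.

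The key combinatorial step is to derive a recurrence $B_n = T_q(B_{n-1})$ by inserting $\pm n$ into the gaps of $\sigma \in B_{n-1}$; there are $2n$ such insertions, matching $|B_n|/|B_{n-1}|$. Since $+n$ is always the maximum whenever it appears in a pair, its contribution mimics the insertion of $n+1$ in type $A$ exactly: the extremal gaps each add a factor $x_n$ or $y_n$, while each interior gap destroys an existing top and replaces it with $x_n y_n$, yielding an $\bigl[(x_n+y_n)+x_n y_n\bpartial\bigr]$-type operator. The insertions of $-n$ each contribute a factor $q$; because $-n$ is always the minimum, the \emph{neighboring} values of the old permutation assume the roles of the new tops, so summing these contributions should yield a second linear operator of comparable structure. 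Altogether I expect $T_q$ to have the shape $\bigl[(x_n+y_n) + x_n y_n\bpartial\bigr] + q \cdot (\text{analogous operator from $-n$ insertions})$.

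Once $T_q$ is in hand, I would verify stability-preservation using Theorem~\ref{thm:stable}: evaluate $T_q\bigl((\mathbold{x}+\mathbold{u})^{[n]}(\mathbold{y}+\mathbold{v})^{[n]}\bigr)$, factor out the manifestly non-vanishing monomial, and check that the remaining sum of reciprocals lies in $\mathcal{H}_-$ whenever all variables lie in $\mathcal{H}_+$, exactly as in the type $A$ proof. The hypothesis $q \geq 0$ is essential here: it keeps every coefficient of $T_q$ non-negative, so the half-plane argument goes through. The main obstacle I anticipate is purely combinatorial: identifying the right convention for indexing type $B$ tops so that the $-n$-insertion contribution combines cleanly into a single stability-preserving operator rather than splintering into many boundary-value cases (top equal to $0$, top negative from a pair of two negative entries, etc.). With the right bookkeeping the stability check should be a routine half-plane computation, and Theorem~\ref{thm:Brenti} follows by specialization and diagonalization.
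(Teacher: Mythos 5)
Your high-level plan is exactly the paper's: define a multivariate refinement via descent-top and ascent-top sets, establish an insertion recurrence of the form $B_{n+1} = T_q(B_n)$, verify that $T_q$ is stability-preserving via Theorem~\ref{thm:stable}, and recover the univariate claim by specialization and diagonalization. However, you correctly flag---but do not resolve---the central combinatorial issue, and this is a genuine gap.

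The problem with your proposed definition $\mathcal{DT}_B(\sigma) = \{\max(\sigma_{i-1},\sigma_i) : \sigma_{i-1}>\sigma_i\}$ (signed maximum) is exactly the splintering you worry about: if $\sigma_1 < 0$ the pair $(0,\sigma_1)$ has top $0$, and if both $\sigma_{i-1},\sigma_i<0$ the top is a negative number. You would then need variables $x_0, x_{-1}, \ldots$, the polynomial would not be multiaffine in $(x_1,\ldots,x_n,y_1,\ldots,y_n)$, and the operator would not close up. The paper's resolution is to take the maximum of \emph{absolute} values:
\[
\mathcal{DT}_B(\sigma) = \{\max(|\sigma_i|,|\sigma_{i+1}|) : 0 \leq i \leq n-1,\ \sigma_i > \sigma_{i+1}\},
\]
and likewise for $\mathcal{AT}_B$. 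Since one of $|\sigma_i|,|\sigma_{i+1}|$ is always $\geq 1$, every top lands in $[n]$, and each of $1,\ldots,n$ appears exactly once as a top, so the polynomial is multiaffine in $2n$ variables. This is the bookkeeping that makes the recursion close.

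A second, smaller inaccuracy: your anticipated operator shape $\bigl[(x_n+y_n)+x_ny_n\bpartial\bigr] + q\cdot(\text{analogue})$ is not quite right, because the type $B$ window has only one free end (the left end is anchored at $\sigma_0=0$). Inserting $+(n+1)$ at the end of the window contributes only $y_{n+1}$, inserting $-(n+1)$ there contributes $qx_{n+1}$, and all interior insertions of either sign destroy one existing top and create a new $x_{n+1}y_{n+1}$. The resulting recurrence is
\[
B_{n+1}(\mathbold{x},\mathbold{y};q) = (qx_{n+1}+y_{n+1})\,B_n(\mathbold{x},\mathbold{y};q) + (1+q)\,x_{n+1}y_{n+1}\,\bpartial\, B_n(\mathbold{x},\mathbold{y};q),
\]
with base case $B_1 = qx_1 + y_1$. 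The half-plane verification of stability-preservation then proceeds exactly as you describe, and the hypothesis $q\geq 0$ is used precisely where you expected.
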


These $B_n(x;q)$ polynomials specialize to Eulerian 
polynomials $A_{n-1}(x)$ and $B_n(x)$---when $q=0$ and $q=1$, 
respectively---so that Theorem~\ref{thm:Brenti} simultaneously 
generalizes Theorems~\ref{thm:typeAreal} and~\ref{thm:typeBreal}.

\medskip
We will proceed in the same way that the multivariate Theorem~\ref{thm:typeA} extends 
the univariate Theorem~\ref{thm:typeAreal}.  Recall that the stability of the multivariate 
refinement of the type $A$ Eulerian polynomials in \eqref{eq:typeApoly} came from the careful 
choice of the statistic. Choosing the maximum of the two values $\sigma_i$ and $\sigma_{i+1}$ for
both ascents and descents allowed for the simple stability-preserving recursion.
We apply this idea to signed permutations in such a way that the definitions remain consistent with
the definitions for ordinary permutations.

\begin{defn} 
Given $\sigma \in B_n$, define the type $B$ \emph{descent top} set to be
\[
  \mathcal{DT}_B(\sigma) = \{ \max(|\sigma_i|,|\sigma_{i+1}|) : 0 \leq i \leq n-1, \,
  \sigma_i > \sigma_{i+1} \}.
 \]
Analogously, we define the type $B$ \emph{ascent top} set to be 
\[
  \mathcal{AT}_B(\sigma) = \{ \max(|\sigma_i|,|\sigma_{i+1}|) : 0 \leq i \leq n-1, \,
  \sigma_i < \sigma_{i+1} \}.
\]
\end{defn}

For example, when $\sigma = (3,1,-4, -5, 2) \in B_5$, $\mathcal{DT}_B(\sigma)=\{3,4,5\}$ 
and $\mathcal{AT}_B (\sigma)=\{3,5\}$.

\begin{thm}
  \label{thm:multivariateBrenti}
  For $q \ge 0$,
  
  \begin{equation}
\label{eq:typeBpolyhom}
B_n(\mathbold{x}, \mathbold{y}; q) = \sum_{\sigma \in B_n}q^{N(\sigma)} \mathbold{x}^{\mathcal{DT}_B(\sigma)} \mathbold{y}^{\mathcal{AT}_B(\sigma)}
 \end{equation}
   is stable.
\end{thm}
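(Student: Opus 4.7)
The plan is to mirror the inductive proof of Theorem~\ref{thm:typeA}, replacing the insertion of $n+1$ into an ordinary permutation by the insertion of $\pm n$ into a signed permutation. For the base case, $B_0(\mathbold{x},\mathbold{y};q)=1$ is trivially stable. For the inductive step I would first derive a recurrence $B_n = T_q\,B_{n-1}$ for a linear operator $T_q$, and then invoke Theorem~\ref{thm:stable} to show $T_q$ preserves real stability for every fixed $q\ge 0$.

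The key combinatorial observation is that every $\sigma\in B_n$ arises uniquely from some $\sigma'\in B_{n-1}$ by inserting $+n$ or $-n$ into one of the $n$ positions of the window of $\sigma'$. Because $|\pm n|$ exceeds every other entry's absolute value, inserting $\pm n$ at an interior position $k\in\{1,\dots,n-1\}$ destroys exactly the comparison at index $k$ of $\sigma'$ (removing the unique $x_j$ or $y_j$ that it contributed) and creates two new comparisons, both having $n$ as the max of absolute values: one ascent and one descent, regardless of the sign on $n$. Inserting at the last slot $k=n$ creates only one new comparison: an ascent for $+n$ and a descent for $-n$. Since inserting $-n$ contributes an additional factor of $q$ via $N(\sigma)$, and since the convention $\sigma_0 \stackrel{\text{def}}{=} 0$ makes $k=1$ behave as a genuine interior slot, these observations give the recurrence
\begin{equation*}
B_n(\mathbold{x},\mathbold{y};q) = (y_n+qx_n)\,B_{n-1}(\mathbold{x},\mathbold{y};q) + (1+q)\,x_n y_n\,\bpartial B_{n-1}(\mathbold{x},\mathbold{y};q),
\end{equation*}
where $\bpartial = \sum_{i=1}^{n-1}\bigl(\partial/\partial x_i + \partial/\partial y_i\bigr)$.

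To verify the hypothesis of Theorem~\ref{thm:stable} for $T_q = (y_n+qx_n) + (1+q)\,x_n y_n\,\bpartial$, I apply it to $F = (\mathbold{x}+\mathbold{u})^{[n-1]}(\mathbold{y}+\mathbold{v})^{[n-1]}$. Using $\bpartial F = F\cdot\sum_{i=1}^{n-1}\bigl(\tfrac{1}{x_i+u_i}+\tfrac{1}{y_i+v_i}\bigr)$, the image factors as
\begin{equation*}
T_q(F) = (1+q)\,x_n y_n\,F \cdot \left[\frac{1}{(1+q)x_n}+\frac{q}{(1+q)y_n}+\sum_{i=1}^{n-1}\left(\frac{1}{x_i+u_i}+\frac{1}{y_i+v_i}\right)\right].
\end{equation*}
For $q\ge 0$ the coefficients $1/(1+q)$ and $q/(1+q)$ are nonnegative reals with the first strictly positive, so whenever all variables lie in $\mathcal{H}_+$ the bracket is a nontrivial nonnegative combination of values $1/z$ with $z\in\mathcal{H}_+$ and therefore lies in $\mathcal{H}_-$; thus $T_q(F)$ is nonvanishing on the upper half-plane, i.e.\ real stable. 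Theorem~\ref{thm:stable} then yields that $T_q$ preserves real stability on multiaffine polynomials, and the induction closes.

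The main obstacle I anticipate is not the stability calculation---which is a direct generalization of the type $A$ argument---but the combinatorial bookkeeping to verify that the recurrence really does split symmetrically between $+n$ and $-n$ insertions at interior positions. This symmetry is exactly what forces the refinement to be indexed by $\max(|\sigma_i|,|\sigma_{i+1}|)$ rather than by $\max(\sigma_i,\sigma_{i+1})$, and it is precisely what produces the critical $(1+q)$ factor in front of the $x_n y_n\,\bpartial$ term that makes the resulting operator stability-preserving.
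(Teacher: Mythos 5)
Your proof is correct and follows essentially the same route as the paper: derive the recurrence $B_n = \bigl[(y_n + qx_n) + (1+q)x_ny_n\bpartial\bigr]B_{n-1}$ by inserting $\pm n$ into $B_{n-1}$, then verify via Theorem~\ref{thm:stable} that the operator preserves stability by computing its action on $(\mathbold{x}+\mathbold{u})^{[n-1]}(\mathbold{y}+\mathbold{v})^{[n-1]}$ and factoring out to exhibit a sum of $1/z$ terms with nonnegative real weights. The only cosmetic differences are the choice of base case ($n=0$ versus the paper's $n=1$) and the way the prefactor is pulled out of the bracket.
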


\begin{proof}
  As in the proof of Theorem~\ref{thm:typeA}, we proceed by induction. $B_1(x_1,y_1;q) = qx_1+y_1$ 
  is stable when $q\ge0$, which settles the base case.  By observing the effect on the ascent top and
  descent top sets of type $B$ of inserting $n+1$ or $-(n+1)$ into a signed permutation 
  $\sigma \in B_{n}$, we obtain the following recursion. For $n>0$, we have
\begin{equation}
  \label{eq:Bqrecurrence}
  B_{n+1}(\mathbold{x},\mathbold{y};q) = 
  (qx_{n+1} + y_{n+1})B_{n}(\mathbold{x},\mathbold{y};q) + (1+q)x_{n+1}y_{n+1}\bpartial B_{n}(\mathbold{x},\mathbold{y};q).
 \end{equation}

To complete the proof, we note that for a fixed $q\ge0$, the linear operator acting on the right hand side, 
$T =  (qx_n + y_n)+(1+q)x_ny_n\bpartial$ preserves stability by Theorem~\ref{thm:stable}, since 
\begin{align*}
T((\mathbold{x}+\mathbold{u})^{[n]}&(\mathbold{y}+\mathbold{v})^{[n]}) =  \\ & x_{n+1} y_{n+1} 
\left[\frac{q}{y_{n+1}} + \frac{1}{x_{n+1}} + 
\sum_{i=1}^{n}\left(\frac{1+q}{x_i+u_i} + \frac{1+q}{y_i+v_i}\right)\right] 
(\mathbold{x}+\mathbold{u})^{[n]}(\mathbold{y}+\mathbold{v})^{[n]}
\end{align*}
is in $\mathfrak{S}_\mathbb{R}[\mathbold{x},\mathbold{y},\mathbold{u},\mathbold{v}]$ whenever $q\ge 0$.
\end{proof}

Our theorem has some noteworthy consequences.
The value of $B_n(\mathbold{x},\mathbold{y};q)$ at $q=-1$ is immediate from the recursion.

\begin{cor}
 \label{cor:Bnminusone}
 \begin{equation*}
   B_n(\mathbold{x},\mathbold{y};-1)= (\mathbold{y}-\mathbold{x})^{[n]}
  \end{equation*}
\end{cor}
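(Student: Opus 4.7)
The plan is to derive the identity directly from the recurrence \eqref{eq:Bqrecurrence} established in the proof of Theorem~\ref{thm:multivariateBrenti}, by exploiting a convenient cancellation at $q=-1$.

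First, I would substitute $q=-1$ into the recurrence
\[
  B_{n+1}(\mathbold{x},\mathbold{y};q) = (qx_{n+1}+y_{n+1})\,B_{n}(\mathbold{x},\mathbold{y};q) + (1+q)x_{n+1}y_{n+1}\,\bpartial B_{n}(\mathbold{x},\mathbold{y};q).
\]
The crucial observation is that the coefficient $1+q$ of the differential term vanishes at $q=-1$, so the recurrence collapses to the first-order relation
\[
  B_{n+1}(\mathbold{x},\mathbold{y};-1) = (y_{n+1}-x_{n+1})\,B_{n}(\mathbold{x},\mathbold{y};-1).
\]

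Next I would check the base case: $B_1(x_1,y_1;-1) = (-1)\cdot x_1 + y_1 = y_1-x_1$, which matches $(\mathbold{y}-\mathbold{x})^{[1]}$. A straightforward induction on $n$ then yields
\[
  B_n(\mathbold{x},\mathbold{y};-1) = \prod_{i=1}^{n}(y_i-x_i) = (\mathbold{y}-\mathbold{x})^{[n]},
\]
which is the desired identity.

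There is really no obstacle here; the entire argument rests on noticing that $q=-1$ kills the $\bpartial$-term in \eqref{eq:Bqrecurrence}. The only minor point of care is to confirm that the recurrence, derived combinatorially for $q\ge 0$, is in fact a polynomial identity in $q$ and therefore valid at $q=-1$ as well. This is immediate since both sides of \eqref{eq:Bqrecurrence} are polynomials in $q$ (with coefficients that are polynomials in $\mathbold{x}$ and $\mathbold{y}$) that agree on the infinite set $q\ge 0$.
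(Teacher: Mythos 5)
Your proof is correct and matches the paper's approach exactly: the paper simply remarks that the identity is ``immediate from the recursion,'' and your argument spells out precisely why, by observing that $q=-1$ annihilates the $\bpartial$-term in \eqref{eq:Bqrecurrence} and then inducting from the base case $B_1(x_1,y_1;-1)=y_1-x_1$. Your closing remark that \eqref{eq:Bqrecurrence} is a polynomial identity in $q$ (hence valid at $q=-1$ even though the stability statement requires $q\ge 0$) is a careful and correct point, though the recurrence itself is established by the insertion argument and does not depend on $q\ge 0$ in the first place.
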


This refines the following theorem of Reiner.
\begin{cor}[Theorem 3.2 of \cite{Rei95}]
\[ \sum_{\sigma \in B_n} \delta(\sigma)x^{\mathcal{D}_B(\sigma)} = \prod_{i=1}^n (1-x_i),\]
where $\delta$ is a one-dimensional character of $B_n$ defined by $\delta(\sigma) = (-1)^{N(\sigma)}$. 
\end{cor}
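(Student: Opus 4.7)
The plan is to derive Reiner's identity by specializing Corollary~\ref{cor:Bnminusone}. I set $y_i = 1$ for every $i \in [n]$ in the identity $B_n(\mathbold{x},\mathbold{y};-1) = (\mathbold{y}-\mathbold{x})^{[n]}$. The right-hand side collapses to $\prod_{i=1}^n (1-x_i)$, matching Reiner's product. The left-hand side, by the definition~\eqref{eq:typeBpolyhom} evaluated at $q=-1$ and $y_i=1$, becomes $\sum_{\sigma \in B_n} \delta(\sigma)\, \mathbold{x}^{\mathcal{DT}_B(\sigma)}$, using $(-1)^{N(\sigma)} = \delta(\sigma)$ and the fact that every factor in $\mathbold{y}^{\mathcal{AT}_B(\sigma)}$ reduces to $1$. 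This already establishes a refined form of Reiner's identity in terms of the descent-top statistic.

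To match Reiner's original formulation, which uses the descent-position statistic $\mathcal{D}_B(\sigma)$, the remaining step is the equality
$$\sum_{\sigma \in B_n} \delta(\sigma)\, \mathbold{x}^{\mathcal{D}_B(\sigma)} = \sum_{\sigma \in B_n} \delta(\sigma)\, \mathbold{x}^{\mathcal{DT}_B(\sigma)}.$$
This is not apparent term by term: for instance, $(-2,1)\in B_2$ has $\mathcal{D}_B(\sigma) = \{1\}$ while $\mathcal{DT}_B(\sigma) = \{2\}$, yet a direct check on $B_2$ confirms that both signed sums equal $(1-x_1)(1-x_2)$. My approach would be to construct an explicit sign-preserving bijection $\iota: B_n \to B_n$ satisfying $\mathcal{D}_B(\iota(\sigma)) = \mathcal{DT}_B(\sigma)$ and $\delta(\iota(\sigma)) = \delta(\sigma)$, converting one signed sum into the other monomial by monomial. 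A natural template is to process $\sigma$ left to right, using sign flips to align descent positions with descent tops while tracking the parity of $N(\sigma)$.

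The main obstacle is constructing $\iota$ while preserving the parity of $N(\sigma)$, because descent positions and descent tops encode rather different data and a single sign change can alter both statistics in coupled ways. If this direct bijective route proves too delicate, I would fall back on Reiner's own sign-cancellation argument to obtain the $\mathcal{D}_B$-identity independently, and then regard the specialization of Corollary~\ref{cor:Bnminusone} as a parallel $\mathcal{DT}_B$-refinement, the two being reconciled a posteriori through their common value $\prod_{i=1}^n(1-x_i)$.
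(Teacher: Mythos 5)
Your initial specialization is correct: setting $y_i = 1$ for all $i$ in $B_n(\mathbold{x},\mathbold{y};-1) = (\mathbold{y}-\mathbold{x})^{[n]}$ gives $\sum_{\sigma \in B_n} \delta(\sigma)\,\mathbold{x}^{\mathcal{DT}_B(\sigma)} = \prod_{i=1}^n(1-x_i)$, and you are right to flag that this involves the descent-top set $\mathcal{DT}_B$ rather than the descent-position set $\mathcal{D}_B$ appearing in the stated corollary.

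However, the sign-preserving bijection $\iota\colon B_n \to B_n$ you propose, with $\mathcal{D}_B(\iota(\sigma)) = \mathcal{DT}_B(\sigma)$ and $\delta(\iota(\sigma)) = \delta(\sigma)$, cannot exist: already on $B_2$ the joint distributions of $(\mathcal{D}_B,\delta)$ and $(\mathcal{DT}_B,\delta)$ differ. The pair $(\{1\},-)$ occurs for two elements, $(-1,2)$ and $(-2,1)$, under $\mathcal{D}_B$, but only for $(-1,2)$ under $\mathcal{DT}_B$ (since $(-2,1)$ has descent top $2$, not $1$); and $(\{1\},+)$ occurs once under $\mathcal{D}_B$ (for $(-2,-1)$) but never under $\mathcal{DT}_B$. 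So the two signed sums agree only after cancellation, and the monomial-by-monomial conversion you are after is ruled out.

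In fact the paper does not prove this corollary at all; it is cited from Reiner, and the surrounding remark ``this refines'' is best read at the univariate level: both Corollary~\ref{cor:Bnminusone} (specialized at $y_i = 1$, $x_i = x$) and Reiner's identity (specialized at $x_i = x$) collapse to the same univariate statement $\sum_{\sigma}\delta(\sigma)x^{|\mathcal{D}_B(\sigma)|} = (1-x)^n$, using $|\mathcal{DT}_B(\sigma)| = |\mathcal{D}_B(\sigma)|$. They are parallel multivariate refinements of a common univariate formula; one does not specialize to the other. Your fallback---invoking Reiner's own argument---is therefore exactly what the paper does; the specialization of Corollary~\ref{cor:Bnminusone} yields the companion $\mathcal{DT}_B$-refinement, not the $\mathcal{D}_B$-statement as written.
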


If we set $q=1$, we obtain the analogue of Theorem~\ref{thm:typeA} for type $B$.
\begin{cor}
 \begin{equation*}
   B_n(\mathbold{x},\mathbold{y};1)=\sum_{\sigma \in B_n} \mathbold{x}^{\mathcal{DT}_B(\sigma)}\mathbold{y}^{\mathcal{AT}_B(\sigma)}
 \end{equation*}
 is stable.
\end{cor}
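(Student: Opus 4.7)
The plan is to observe that this corollary is essentially immediate from Theorem~\ref{thm:multivariateBrenti}. Specifically, I would note that in the statement and proof of Theorem~\ref{thm:multivariateBrenti}, the parameter $q$ is treated as a fixed real parameter with $q \geq 0$, not as an additional formal variable. Consequently, for each such $q$ the polynomial $B_n(\mathbold{x},\mathbold{y};q) \in \mathbb{R}[\mathbold{x},\mathbold{y}]$ is stable. Specializing $q=1$ then reduces $q^{N(\sigma)}$ to $1$ for every signed permutation $\sigma \in B_n$, so the generating function collapses to exactly
\[
\sum_{\sigma \in B_n} \mathbold{x}^{\mathcal{DT}_B(\sigma)} \mathbold{y}^{\mathcal{AT}_B(\sigma)},
\]
which is therefore stable.

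If one preferred a self-contained argument, the alternative would be to repeat the inductive template from the proof of Theorem~\ref{thm:multivariateBrenti} at $q=1$ directly. The base case $B_1(x_1,y_1;1) = x_1+y_1$ is stable, and the recurrence~\eqref{eq:Bqrecurrence} specializes to
\[
B_{n+1}(\mathbold{x},\mathbold{y};1) = (x_{n+1}+y_{n+1})B_n(\mathbold{x},\mathbold{y};1) + 2x_{n+1}y_{n+1}\bpartial B_n(\mathbold{x},\mathbold{y};1).
\]
One would then apply Theorem~\ref{thm:stable} to the linear operator $T = (x_{n+1}+y_{n+1}) + 2x_{n+1}y_{n+1}\bpartial$, checking that $T((\mathbold{x}+\mathbold{u})^{[n]}(\mathbold{y}+\mathbold{v})^{[n]})$ factors as a product of $x_{n+1}y_{n+1}(\mathbold{x}+\mathbold{u})^{[n]}(\mathbold{y}+\mathbold{v})^{[n]}$ with a bracketed sum of reciprocals that lies in $\mathcal{H}_-$ whenever the arguments lie in $\mathcal{H}_+$. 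This is exactly the $q=1$ case of the computation already carried out in the proof of Theorem~\ref{thm:multivariateBrenti}, so no new obstacle appears. The only thing to beware of is making sure the specialization $q=1$ is valid in the hypothesis ``$q \geq 0$'' of that theorem, which it clearly is; there is no genuinely hard step here.
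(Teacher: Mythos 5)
Your proposal is correct and matches the paper's reasoning exactly: the corollary is obtained by specializing $q=1$ in Theorem~\ref{thm:multivariateBrenti}, which is valid since $1 \geq 0$. The self-contained alternative you sketch is just the $q=1$ instance of the same inductive argument, so there is no substantive difference.
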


We would like to point out that when we plug in $q=0$ into \eqref{eq:typeBpolyhom}, we get a \emph{homogenized} polynomial that is 
not equal to the polynomial $A_{n-1}(\mathbold{x}, \mathbold{y})$ from \eqref{eq:typeApoly}, since their recursions differ.  
Rather, $B_n(\mathbold{x},\mathbold{y};0)$ is the permanent of the following $n \times n$ matrix 
$M=(m_{ij})$ considered in~\cite{BHVW11}. For $i,j \in [n]$, let $m_{ij} = x_i$, when $i < j$ and 
$m_{ij}=y_j$, otherwise.  When we expand the permanent by the last column, we obtain the recurrence in \eqref{eq:Bqrecurrence} 
with $q=0$ (see Lemma 3.3 in~\cite{BHVW11} for a proof).

\medskip
Specializing the $y$ variables in $B_n(\mathbold{x},\mathbold{y};q)$ to 1, it follows that

\begin{cor}
  For $q \ge 0$, \[B_n(\mathbold{x};q)=\sum_{\sigma \in B_n}q^{N(\sigma)} \mathbold{x}^{ \mathcal{DT}_B(\sigma)}\] is stable.
\end{cor}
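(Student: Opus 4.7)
The plan is to deduce this as an immediate specialization of Theorem~\ref{thm:multivariateBrenti}, which already guarantees that the bivariate polynomial $B_n(\mathbold{x}, \mathbold{y}; q)$ lies in $\mathfrak{S}_\mathbb{R}[\mathbold{x}, \mathbold{y}]$ for each fixed $q \ge 0$. The only work is to pass from the $(\mathbold{x}, \mathbold{y})$-polynomial to the $\mathbold{x}$-polynomial by setting each $y_i = 1$.

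First I would fix $q \ge 0$ and invoke Theorem~\ref{thm:multivariateBrenti} to conclude that $B_n(\mathbold{x}, \mathbold{y}; q) \in \mathfrak{S}_\mathbb{R}[\mathbold{x}, \mathbold{y}]$. Then, applying Lemma~\ref{lem:stable}(3) (specialization) successively with $y_1 = y_2 = \cdots = y_n = 1 \in \mathbb{R}$, stability is preserved at each step, so the resulting polynomial in $\mathbold{x}$ alone is stable. Finally I would verify that this specialization reproduces the claimed combinatorial sum: since $\mathbold{y}^{\mathcal{AT}_B(\sigma)} = \prod_{i \in \mathcal{AT}_B(\sigma)} y_i$ becomes $1$ once every $y_i$ is set to $1$, the expression
\[
B_n(\mathbold{x}, \mathbold{y}; q)\big|_{y_1 = \cdots = y_n = 1} = \sum_{\sigma \in B_n} q^{N(\sigma)} \mathbold{x}^{\mathcal{DT}_B(\sigma)}
\]
is precisely $B_n(\mathbold{x}; q)$.

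There is essentially no obstacle here; the only thing to be a little careful about is that Lemma~\ref{lem:stable}(3) requires $a \in \mathbb{R}$, which is trivially true for $a = 1$, and that the stability of $B_n(\mathbold{x}, \mathbold{y}; q)$ is being used for a \emph{fixed} $q \ge 0$ (the theorem is not about joint stability in $q$ and the $x_i, y_i$ variables, but rather stability in the $x_i, y_i$ for each real value $q \ge 0$, which is exactly what the specialization argument needs).
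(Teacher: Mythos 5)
Your proposal is correct and matches the paper's own (very brief) justification exactly: the paper also obtains this corollary by invoking Theorem~\ref{thm:multivariateBrenti} and then specializing each $y_i$ to $1$, which preserves stability by Lemma~\ref{lem:stable}(3). Nothing is missing; the identification of the specialized polynomial with the stated combinatorial sum is immediate.
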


Finally, observe that (the non-homogeneous) $B_n(\mathbold{x}; q)$ does reduce 
to $A_{n-1}(\mathbold{x})$ and $B_n(\mathbold{x})$---the multivariate Eulerian polynomial of 
type $A$ and type $B$---when $q=0$ and $q=1$, respectively.  Diagonalizing $\mathbold{x}$ 
in $B_n(\mathbold{x};q)$ yields the polynomial $B_n(x;q)$ defined in \eqref{eq:typeBpoly}.  
We therefore recover Theorem~\ref{thm:Brenti} as a corollary.

\begin{cor}
  For $q \ge 0$, $B_n(x; q)$ is stable.
\end{cor}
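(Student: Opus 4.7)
The plan is to obtain this corollary as a direct specialization of the multivariate stability result in Theorem~\ref{thm:multivariateBrenti}, using the elementary stability-preserving operations from Lemma~\ref{lem:stable}. All the real work has already been done in establishing that $B_n(\mathbold{x},\mathbold{y};q)$ is stable; what remains is to transport that stability through a specialization followed by a diagonalization.

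First, I would fix any $q \ge 0$ and start from the stable polynomial $B_n(\mathbold{x},\mathbold{y};q) \in \rstable[\mathbold{x},\mathbold{y}]$ furnished by Theorem~\ref{thm:multivariateBrenti}. Applying Lemma~\ref{lem:stable}(3) repeatedly, I would specialize each $y_i$ to $1$; since each specialization preserves real stability, the resulting polynomial
\[
B_n(\mathbold{x};q) \;=\; \sum_{\sigma \in B_n} q^{N(\sigma)} \mathbold{x}^{\mathcal{DT}_B(\sigma)}
\]
lies in $\rstable[\mathbold{x}]$. This is exactly the previous corollary, so this step is already recorded in the paper.

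Next, I would collapse all of the $x_i$ variables to a single variable $x$ by iteratively applying Lemma~\ref{lem:stable}(2) (diagonalization): set $x_2 := x_1$, then $x_3 := x_1$, and so on, each time preserving stability. The resulting univariate polynomial is
\[
\sum_{\sigma \in B_n} q^{N(\sigma)} x^{|\mathcal{DT}_B(\sigma)|} \;=\; \sum_{\sigma \in B_n} q^{N(\sigma)} x^{|\mathcal{D}_B(\sigma)|} \;=\; B_n(x;q),
\]
where the first equality uses the fact that $|\mathcal{DT}_B(\sigma)| = |\mathcal{D}_B(\sigma)|$ (each descent contributes exactly one descent top). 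Since $B_n(x;q) \in \mathbb{R}[x]$ is univariate, its real stability is equivalent to real-rootedness, completing the argument.

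There is essentially no obstacle here beyond bookkeeping; the only subtle point is the combinatorial identity $|\mathcal{DT}_B(\sigma)| = |\mathcal{D}_B(\sigma)|$, which is immediate because the map that sends each descent position to its associated descent top is injective. The genuine content of the corollary lies upstream in Theorem~\ref{thm:multivariateBrenti} and the stability-preservation of the operator $T = (qx_{n+1}+y_{n+1}) + (1+q)x_{n+1}y_{n+1}\bpartial$, which required $q \ge 0$ so that the bracketed factor in the proof of that theorem maps $\mathcal{H}_+$ into $\mathcal{H}_-$.
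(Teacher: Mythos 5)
Your proof is correct and matches the paper's argument exactly: specialize the $y_i$ variables to $1$, then diagonalize the $x_i$ variables, each step preserving stability by Lemma~\ref{lem:stable}. The paper takes precisely this two-step route (recording the intermediate specialized polynomial $B_n(\mathbold{x};q)$ as a separate corollary along the way), so there is nothing different to compare.
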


\subsection{Eulerian Polynomials for Colored Permutations}
\label{sec:signed}

Theorem~\ref{thm:multivariateBrenti} can be extended in two directions simultaneously: from signed permutations to colored permutations, and from a single $q$ variable to several.

Let $\mathbb{Z}_r$ denote the cyclic group of order $r$ with generator $\zeta$.  
We will take $\zeta$ to be an $r$th primitive root of unity. The wreath product 
$G_n^r = \mathbb{Z}_r \wr A_{n-1}$ is the semidirect product $(\mathbb{Z}_r)^{\times n} \rtimes A_{n-1}$. Its elements can 
be thought of as $\sigma = (\zeta^{e_1} \tau_1, \dots, \zeta^{e_n} \tau_n),$ where $e_i \in \{0, 1, \ldots, r-1\}$ and 
$\tau \in A_{n-1}$. 
The group $G_n^r$ is sometimes called the generalized symmetric group, since
$\mathcal{A}_{n-1} \cong \mathrm{Sym}(n)$. Its elements are also known as 
$r$-colored permutations, which reduce to signed permutations and ordinary 
permutations when $r=2$ and $r=1$, respectively. In other words, $B_n = G_n^2$ 
and $A_{n-1} = G_n^1$.

\begin{defn}
  Given $\sigma = (\zeta^{e_1} \tau_1, \dots, \zeta^{e_n} \tau_n) \in G_n^r$, let $\mathcal{N}(\sigma)$ 
  be the multiset in which each $i \in [n]$ appears $e_i$ times.  
\end{defn}
Note that for $\sigma \in B_n$, $|\mathcal{N}(\sigma)| = N(\sigma)$, the number of negative entries in 
$\sigma = (\sigma_1, \dots, \sigma_n)$.

We adopt the following total order on the elements of 
$(\mathbb{Z}_r \times [n]) \cup \{0\}$ (see \cite{Ass10,BZ11}, for example):
\begin{align*}
 \label{eq:ordering}
  \zeta^{r-1} n< \dots < \zeta n < \dots < \zeta^{r-1}2 < \dots <\zeta 2 <
   \zeta^{r-1} 1< \dots< \zeta 1 < 0  < 1 < 2 < \dots <n.
\end{align*}

While other total orders are also being used in the literature (e.g., 
\cite{Steingr94,Wag03,CM11}) our choice allows for similar stability-preserving recurrences as in the previous cases.
Using this ordering, the definitions of descent top set 
and ascent top set all extend verbatim from $B_n$ to $G_n^r$.  
We shall use $\mathcal{DT}_r(\sigma)$ and $\mathcal{AT}_r(\sigma)$ to denote
them for $\sigma$ in $G_n^r$.
For example, when $\sigma = (3,\zeta^21,\zeta^24,\zeta^45,\zeta 2) \in G_n^5$, then we have $0 < 3 > \zeta^21 > \zeta^24 > \zeta^45 < \zeta 2$ 
and hence, $\mathcal{DT}_{5}(\sigma) = \{3,4,5\}$,
and $\mathcal{AT}_{5}(\sigma) = \{3,5\}$.

\medskip
Br\"and\'en generalized Brenti's $B_n (x;q)$ polynomial, defined in \eqref{eq:typeBpoly}, to
multiple $q$ variables, and proved the following.

\begin{thm}[Corollary 6.5 in~\cite{Bra06}]
 Let $\mathbold{q}=(q_1,...,q_n)$. If $q_i \geq 0$, for $1 \le i \le n$, then
 \begin{equation}
 \label{eq:typeBqs}
  B_n(x; \mathbold{q}) = \sum_{\sigma \in B_n} \mathbold{q}^{\mathcal{N}(\sigma)} x^{|\mathcal{D}_B(\sigma)|}.
 \end{equation}
has only simple real roots. 
\end{thm}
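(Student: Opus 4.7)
The plan is to follow the template of Theorem~\ref{thm:multivariateBrenti} essentially verbatim, merely promoting the scalar $q$ to a vector $\mathbold{q}=(q_1,\dots,q_n)$.  First, I would introduce the multivariate refinement
\[
B_n(\mathbold{x},\mathbold{y};\mathbold{q}) \;=\; \sum_{\sigma \in B_n} \mathbold{q}^{\mathcal{N}(\sigma)}\,\mathbold{x}^{\mathcal{DT}_B(\sigma)}\,\mathbold{y}^{\mathcal{AT}_B(\sigma)},
\]
which recovers $B_n(x;\mathbold{q})$ from~\eqref{eq:typeBqs} after specializing $\mathbold{y}=\mathbold{1}$ and diagonalizing $\mathbold{x}$; the base case $B_1(x_1,y_1;q_1)=q_1x_1+y_1$ is manifestly stable whenever $q_1\ge 0$.

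The key observation is that inserting $\pm(n{+}1)$ into a signed permutation $\sigma\in B_n$ always produces $n{+}1$ as the absolute-value maximum on either side of the insertion point, and contributes a factor of $q_{n+1}$ precisely when the inserted entry is negative; the existing weights $q_1,\dots,q_n$ are untouched.  The single-$q$ recurrence~\eqref{eq:Bqrecurrence} therefore lifts directly to
\[
B_{n+1}(\mathbold{x},\mathbold{y};\mathbold{q}) = (q_{n+1}x_{n+1}+y_{n+1})\,B_n(\mathbold{x},\mathbold{y};\mathbold{q}) + (1+q_{n+1})\,x_{n+1}y_{n+1}\,\bpartial B_n(\mathbold{x},\mathbold{y};\mathbold{q}),
\]
and for fixed $\mathbold{q}$ with $q_i\ge 0$, the induced operator $T=(q_{n+1}x_{n+1}+y_{n+1})+(1+q_{n+1})x_{n+1}y_{n+1}\bpartial$ is of the exact form handled in Theorem~\ref{thm:multivariateBrenti}.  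The identical computation shows that $T((\mathbold{x}+\mathbold{u})^{[n]}(\mathbold{y}+\mathbold{v})^{[n]})$ factors as $x_{n+1}y_{n+1}(\mathbold{x}+\mathbold{u})^{[n]}(\mathbold{y}+\mathbold{v})^{[n]}$ times a bracketed sum whose coefficients are nonnegative real numbers, so the bracket maps $\mathcal{H}_+$ into $\mathcal{H}_-$.  Theorem~\ref{thm:stable} then yields stability of $B_{n+1}(\mathbold{x},\mathbold{y};\mathbold{q})$, and Lemma~\ref{lem:stable} reduces this to the real-rootedness of the univariate $B_n(x;\mathbold{q})$.

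The main obstacle is the stronger claim of \emph{simple} real roots, which real stability does not by itself supply.  I would address it by strengthening the inductive hypothesis to track strict interlacing: assuming that $B_n(x;\mathbold{q})$ and a suitable companion polynomial (for instance, $B_n'(x;\mathbold{q})$, or $B_n$ with $q_n$ slightly perturbed) strictly interlace, the univariate specialization of the recurrence combines multiplication by the strictly positive linear factor $q_{n+1}x+1$ with a positive multiple of a derivative-type operator, so a standard Hermite--Biehler argument should propagate strict interlacing to $B_{n+1}(x;\mathbold{q})$.  The boundary case in which some $q_i=0$ collapses the polynomial to the classical type~$A$ Eulerian polynomial $A_{n-1}(x)$, whose simple real-rootedness is well known.
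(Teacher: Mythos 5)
Two preliminary remarks.  First, the paper does not actually prove this theorem: it is quoted as Corollary~6.5 of Br\"and\'en~\cite{Bra06}, and the paper's own stability framework only re-derives the weaker statement ``has only real roots'' (via the $G_n^r$ theorem specialized to $r=2$, then diagonalizing $\mathbold{x}$ and setting $\mathbold{y}=1$); the word ``simple'' is never addressed in the paper.  Second, your multivariate portion is correct and is in fact exactly the paper's route for the real-rootedness part: the recurrence
$B_{n+1}(\mathbold{x},\mathbold{y};\mathbold{q}) = (q_{n+1}x_{n+1}+y_{n+1})B_n + (1+q_{n+1})x_{n+1}y_{n+1}\bpartial B_n$
is the $r=2$ case of the $G_n^r$ recurrence, and the operator test is identical to the one carried out for Theorem~\ref{thm:multivariateBrenti}.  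So up to ``only real roots'' you are on solid ground and in line with the paper.

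The gap is the ``simple'' part, and your sketch does not close it.  A few concrete problems.  (i)~The hypothesis ``assume $B_n(x;\mathbold{q})$ and $B_n'(x;\mathbold{q})$ strictly interlace'' is essentially equivalent to the claim that $B_n$ has simple roots, so taking it as the inductive hypothesis without verifying that the recurrence actually \emph{propagates} strict interlacing begs the question.  (ii)~The phrase ``univariate specialization of the recurrence'' hides real work: after diagonalizing and setting $\mathbold{y}=1$, the term $\bpartial B_n$ leaves a residual $\partial/\partial y$ that is not a derivative of the univariate polynomial $B_n(x;\mathbold{q})$; one must invoke the homogeneity of $B_n(\mathbold{x},\mathbold{y};\mathbold{q})$ (Euler's identity) to convert it into the familiar $(1-x)B_n'(x;\mathbold{q}) + n B_n(x;\mathbold{q})$ form before any sign-alternation or Hermite--Biehler argument can even be formulated.  (iii)~The degree of $B_n(x;\mathbold{q})$ is $n$ when all $q_i>0$ but drops when some $q_i$ vanish; any interlacing induction must handle this degree drop, and the suggestion of ``perturbing $q_n$'' does not address it.  (iv)~Your boundary claim is false as stated: having \emph{some} $q_i=0$ does not collapse $B_n(x;\mathbold{q})$ to $A_{n-1}(x)$; that only happens when \emph{all} $q_i=0$.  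Note also that Br\"and\'en's actual proof in~\cite{Bra06} does not go through the multivariate ascent/descent-top refinement at all, so even a completed version of your argument would be a genuinely different proof from the cited one.
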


Next, we extend this result simultaneously to $G_n^r$ and to multiple $x$ variables.

\begin{thm}

If $q_i \geq 0$, for all $1\le i\le n$, then the multivariate 
$\mathbold{q}$-Eulerian polynomial for the generalized symmetric group, 
$G_n^r$, defined as

 \begin{equation}
  \label{eq:qrefinement}
  G_n^r(\mathbold{x}, \mathbold{y}; \mathbold{q}) = \sum_{\sigma \in G_n^r} 
  \mathbold{q}^{\mathcal{N}(\sigma)} \mathbold{x}^{\mathcal{DT}_r(\sigma)}
  \mathbold{y}^{\mathcal{AT}_r(\sigma)}
\end{equation}
is stable.

\end{thm}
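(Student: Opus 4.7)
My plan is to follow the template of the proofs of Theorems~\ref{thm:typeA} and~\ref{thm:multivariateBrenti}: induct on $n$, derive an insertion-based recurrence $G_{n+1}^r = T\,G_n^r$ for an explicit linear operator $T$, then invoke Theorem~\ref{thm:stable} to check that $T$ is stability-preserving. For the base case $n = 1$, direct computation gives
\[
G_1^r(x_1, y_1; q_1) \;=\; y_1 + (q_1 + q_1^2 + \cdots + q_1^{r-1})\, x_1,
\]
since $\sigma = \zeta^{e_1}\!\cdot 1 \in G_1^r$ yields an ascent with top $1$ when $e_1 = 0$ and a descent with top $1$ (weighted by $q_1^{e_1}$) when $e_1 \neq 0$. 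This is stable whenever $q_1 \ge 0$.

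For the inductive step, I would classify $\sigma' \in G_{n+1}^r$ by the position and color of its entry of absolute value $n+1$. Extended naturally, the total order places $n+1$ (color $0$) strictly above every existing entry, while $\zeta^e(n+1)$ for $e \neq 0$ sits strictly below every existing entry (including $\sigma_0 = 0$). As a consequence, any internal insertion destroys a single old boundary contribution ($x_a$ or $y_a$, with $a \le n$) and creates an adjacent ascent-descent (or descent-ascent) pair whose tops both have absolute value $n+1$, so the net effect is to multiply by $x_{n+1} y_{n+1}$ regardless of $e$; insertion at the end creates only a single new boundary, whose sign depends on $e$. Summing over all $r$ colors of the new entry (each weighted by $q_{n+1}^e$) and recognizing boundary removal as an application of $\bpartial$ yields
\[
G_{n+1}^r \;=\; \bigl(y_{n+1} + (q_{n+1} + \cdots + q_{n+1}^{r-1})\, x_{n+1}\bigr)\, G_n^r \;+\; \bigl(1 + q_{n+1} + \cdots + q_{n+1}^{r-1}\bigr)\, x_{n+1} y_{n+1}\, \bpartial G_n^r,
\]
which recovers \eqref{eq:Bqrecurrence} when $r = 2$.

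For the stability-preservation step, set $Q = 1 + q_{n+1} + \cdots + q_{n+1}^{r-1}$ and let $T$ denote the operator on the right-hand side. A computation parallel to the type $B$ case gives
\begin{align*}
T\bigl((\mathbold{x}+\mathbold{u})^{[n]}(\mathbold{y}+\mathbold{v})^{[n]}\bigr)
&= x_{n+1} y_{n+1} \left[\frac{Q-1}{y_{n+1}} + \frac{1}{x_{n+1}} + Q \sum_{i=1}^n \left(\frac{1}{x_i+u_i} + \frac{1}{y_i+v_i}\right)\right] \\
&\qquad \times (\mathbold{x}+\mathbold{u})^{[n]}(\mathbold{y}+\mathbold{v})^{[n]}.
\end{align*}
When $q_{n+1} \ge 0$ we have $Q \ge 1$ and $Q - 1 \ge 0$, so the bracketed factor lies in $\mathcal{H}_-$ whenever all remaining variables lie in $\mathcal{H}_+$, and hence the full expression lies in $\mathfrak{S}_\mathbb{R}[\mathbold{x}, \mathbold{y}, \mathbold{u}, \mathbold{v}, x_{n+1}, y_{n+1}]$. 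Theorem~\ref{thm:stable} then yields stability of $G_{n+1}^r$.

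The main step requiring some care is the recurrence itself, and in particular verifying that the chosen total order on $(\mathbb{Z}_r \times [n]) \cup \{0\}$ extends so as to place all $r$ colored copies of $n+1$ (for $e \neq 0$) below every existing entry: this uniformity is what allows the $r$ colors and $n$ internal positions to be packaged together into the single term $Q\, x_{n+1} y_{n+1}\, \bpartial G_n^r$. Once the recurrence is in place, the remaining stability argument runs exactly as in the proofs of Theorems~\ref{thm:typeA} and~\ref{thm:multivariateBrenti} without any new ideas.
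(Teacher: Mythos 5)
Your proposal is correct and follows essentially the same route as the paper: the same base case $G_1^r(x_1,y_1;q_1)=(q_1+\dots+q_1^{r-1})x_1+y_1$ and the same insertion recurrence $G_n^r=\left[((q_n+\dots+q_n^{r-1})x_n+y_n)+(1+\dots+q_n^{r-1})x_ny_n\bpartial\right]G_{n-1}^r$, with stability preserved via Theorem~\ref{thm:stable}. The only difference is that you write out the symbol-test computation for the operator explicitly (with $Q=1+q_{n+1}+\dots+q_{n+1}^{r-1}$), which the paper leaves implicit as being parallel to the type $B$ case; your verification is accurate.
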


\begin{proof}
  $G_1^r(x_1,y_1;q_1) = (q_1+ \dots + q_1^{r-1}) x_1 + y_1$ is clearly stable when $q_1 \ge 0$. 
  The theorem follows immediately from the following recursion. For $n > 1$, 
  \[
      G_n^r(\mathbold{x},\mathbold{y};\mathbold{q}) = 
      \left[((q_n+\dots + q_n^{r-1})x_n + y_n) +  
      (1+ \dots + q_n^{r-1})x_ny_n\bpartial\right] G_{n-1}^r(\mathbold{x},\mathbold{y};\mathbold{q}).  \qedhere
  \]
\end{proof}

As a consequence, we obtain a generalization of Corollary \ref{cor:Bnminusone} to $G_n^r$.

\begin{cor}
 Let $r \ge 2$. For an $r$th root of unity, $\zeta\neq 1$, we have
 \begin{equation*}
   G_n^r(\mathbold{x}, \mathbold{y}; \underbrace{\zeta, \dots, \zeta}_{n}) = (\mathbold{y}-\mathbold{x})^{[n]}.
 \end{equation*}
\end{cor}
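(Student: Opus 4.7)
The plan is to induct on $n$ using the recursion established just above in the proof of the theorem, specialized at $q_n = \zeta$. The base case $n=1$ is immediate:
\[
G_1^r(x_1,y_1;\zeta) = (\zeta + \zeta^2 + \cdots + \zeta^{r-1}) x_1 + y_1 = -x_1 + y_1 = (\mathbold{y}-\mathbold{x})^{[1]},
\]
since any $r$th root of unity $\zeta \ne 1$ satisfies $1+\zeta+\zeta^2+\cdots+\zeta^{r-1}=0$, and hence $\zeta+\zeta^2+\cdots+\zeta^{r-1}=-1$.

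Next, I would apply the recurrence
\[
G_n^r(\mathbold{x},\mathbold{y};\mathbold{q}) = \bigl[(q_n+\cdots+q_n^{r-1})x_n + y_n + (1+q_n+\cdots+q_n^{r-1})\,x_n y_n\,\bpartial\bigr]\,G_{n-1}^r(\mathbold{x},\mathbold{y};\mathbold{q})
\]
with each $q_i$ set equal to $\zeta$. The two simple identities above make the coefficient of the $\bpartial$ term vanish and reduce the coefficient of the remaining terms, so the operator collapses to scalar multiplication by $(y_n - x_n)$. Thus
\[
G_n^r(\mathbold{x},\mathbold{y};\zeta,\dots,\zeta) = (y_n - x_n)\,G_{n-1}^r(\mathbold{x},\mathbold{y};\zeta,\dots,\zeta),
\]
and the inductive hypothesis together with the definition of $(\mathbold{y}-\mathbold{x})^{[n]} = \prod_{i=1}^n (y_i-x_i)$ yields the claim.

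There is really no obstacle here: everything hinges on the elementary fact that the first $r$ powers of a nontrivial $r$th root of unity sum to zero, which kills the derivative term of the stability-preserving operator and leaves behind precisely the linear factor $y_n-x_n$ needed to build up the product. The only thing to double-check is that the convention in the recurrence truly gives the coefficient $1+q_n+\cdots+q_n^{r-1}$ on the $\bpartial$ term (as can be verified by comparison with the type $B$ case, where $r=2$ and this coefficient specializes to $1+q$, matching \eqref{eq:Bqrecurrence}).
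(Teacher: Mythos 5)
Your proof is correct and is exactly the paper's intended (though unstated) argument: as with the $B_n$ case at $q=-1$, the identity follows immediately from the recursion once one observes that $\zeta + \cdots + \zeta^{r-1} = -1$ kills the $\bpartial$ coefficient $1 + \zeta + \cdots + \zeta^{r-1}$ and reduces the remaining operator to multiplication by $y_n - x_n$.
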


Letting $r=2$ also generalizes Theorem~\ref{thm:multivariateBrenti} to multiple $q$ variables.
\begin{cor}
 If $q_i \geq 0$, for all $1\le i\le n$, then $B_n(\mathbold{x}, \mathbold{y}; \mathbold{q})$ is stable.
\end{cor}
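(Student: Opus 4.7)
The plan is to deduce this corollary directly as the $r=2$ specialization of the preceding theorem on $G_n^r(\mathbold{x},\mathbold{y};\mathbold{q})$, by verifying that under the isomorphism $B_n \cong G_n^2$ each of the four ingredients of the generating polynomial matches.

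First, I would fix $\zeta = -1$ as the primitive second root of unity and identify each signed permutation $\sigma = (\sigma_1,\ldots,\sigma_n) \in B_n$ with the $2$-colored permutation $(\zeta^{e_1}\tau_1,\ldots,\zeta^{e_n}\tau_n) \in G_n^2$, where $\tau_i = |\sigma_i|$ and $e_i \in \{0,1\}$ records the sign. Under this identification, the multiset $\mathcal{N}(\sigma)$ contains precisely those $i \in [n]$ with $\sigma_i < 0$ (each appearing once), so $\mathbold{q}^{\mathcal{N}(\sigma)} = \prod_{i : \sigma_i < 0} q_i$, which is the natural multivariate refinement of $q^{N(\sigma)}$.

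Next, I would verify that the ascent/descent statistics match. The total order on $(\mathbb{Z}_2 \times [n]) \cup \{0\}$ specializes to
\[
-n < -n+1 < \cdots < -1 < 0 < 1 < \cdots < n,
\]
which is exactly the order used to read descents and ascents of signed permutations (with the convention $\sigma_0 = 0$ from Proposition~\ref{prop:typeBdescents}). Consequently, $\mathcal{DT}_2(\sigma) = \mathcal{DT}_B(\sigma)$ and $\mathcal{AT}_2(\sigma) = \mathcal{AT}_B(\sigma)$, so
\[
B_n(\mathbold{x}, \mathbold{y}; \mathbold{q}) = \sum_{\sigma \in B_n} \mathbold{q}^{\mathcal{N}(\sigma)} \mathbold{x}^{\mathcal{DT}_B(\sigma)} \mathbold{y}^{\mathcal{AT}_B(\sigma)} = G_n^2(\mathbold{x},\mathbold{y};\mathbold{q}).
\]

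Stability then follows immediately from the preceding theorem applied with $r = 2$, since the hypothesis $q_i \geq 0$ is identical. There is no real obstacle here: the corollary is a bookkeeping statement that reconciles notation, and one could equally well derive it from scratch via the recursion
\[
B_n(\mathbold{x},\mathbold{y};\mathbold{q}) = \bigl[(q_n x_n + y_n) + (1+q_n)x_n y_n \bpartial\bigr] B_{n-1}(\mathbold{x},\mathbold{y};\mathbold{q}),
\]
which is the $r=2$ case of the recurrence used in the previous proof and whose stability-preserving character is again checked via Theorem~\ref{thm:stable}.
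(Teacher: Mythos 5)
Your proposal is correct and takes the same route as the paper: the corollary is obtained by specializing $r=2$ in the preceding theorem for $G_n^r(\mathbold{x},\mathbold{y};\mathbold{q})$, and your careful matching of the total order, the multiset $\mathcal{N}(\sigma)$, and the descent/ascent top sets under the identification $B_n \cong G_n^2$ is exactly the (implicit) bookkeeping the paper relies on. The recursion you mention at the end is indeed the $r=2$ case of the one used to prove that theorem, so both routes amount to the same argument.
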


Diagonalizing $\mathbold{q}$ gives us a result for $G_n^r$ with a single $q$ variable.
\begin{cor}
 If $q \geq 0$, then $G_n^r(\mathbold{x}, \mathbold{y}; q)$ is stable.
\end{cor}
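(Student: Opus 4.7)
The plan is to deduce this corollary as an immediate specialization of the preceding theorem. By definition, the single--$q$ polynomial is obtained from the multi--$q$ polynomial by collapsing all parameters,
\[
G_n^r(\mathbold{x},\mathbold{y};q) = G_n^r(\mathbold{x},\mathbold{y};\mathbold{q})\big|_{q_1 = q_2 = \cdots = q_n = q}.
\]
Under the hypothesis $q \geq 0$, each coordinate of the assignment $(q,q,\ldots,q)$ is a nonnegative real number, so the hypothesis ``$q_i \geq 0$ for all $1 \leq i \leq n$'' of the preceding theorem is satisfied verbatim. That theorem then guarantees stability of the specialized polynomial in the $\mathbold{x},\mathbold{y}$ variables, which is exactly the claim.

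The ``diagonalization'' language used in the paragraph preceding the corollary is, strictly speaking, informal here: throughout the preceding theorem the $q_i$ play the role of nonnegative real parameters rather than polynomial indeterminates. Indeed, already the base case $B_1(x_1,y_1;q_1) = q_1 x_1 + y_1$ fails to be stable as a polynomial in three complex variables (one can check, for example, that $(x_1,y_1,q_1) = (-1+i,\,2i,\,-1+i) \in \mathcal{H}_+^3$ is a zero). So we are not literally invoking part~(2) of Lemma~\ref{lem:stable} to identify variables, but simply fixing a specific real parameter vector. There is no nontrivial step to carry out: all the substantive work --- propagating stability through the recursive operator $[(q_n+\cdots+q_n^{r-1})x_n + y_n + (1+q_n+\cdots+q_n^{r-1})x_ny_n\bpartial]$ --- was already done in the preceding theorem, and the corollary is a one-line consequence.

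Since there is essentially nothing to prove beyond recognizing the specialization, the only ``obstacle'' is a potential notational confusion between specializing real parameters and diagonalizing polynomial variables. To keep the exposition clean, I would state the argument as above and avoid any appeal to Lemma~\ref{lem:stable}, reserving that lemma for settings where we genuinely act on indeterminates (as is done in the corresponding corollaries that specialize the $y_i$ or diagonalize the $\mathbold{x}$ variables of the stable polynomial).
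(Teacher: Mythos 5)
Your proof is correct and takes the same approach as the paper, which reads the corollary directly off the preceding theorem by setting all the parameters $q_i$ equal to a common $q\ge 0$. Your parenthetical clarification is also apt: the paper's word ``diagonalizing'' is informal shorthand for this specialization of real parameters and not an application of Lemma~\ref{lem:stable}(2) to polynomial indeterminates --- indeed, as your base-case example $B_1(x_1,y_1;q_1)=q_1x_1+y_1$ with $(x_1,y_1,q_1)=(-1+i,\,2i,\,-1+i)$ shows, these polynomials are not stable when the $q_i$ are treated as additional variables.
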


By diagonalizing $\mathbold{x}$ and specializing $y_i$ to $1$ for all $1\le i\le n$, we obtain a result of 
Steingr{\'i}msson.
\begin{cor}[Theorem 17 of \cite{Steingr94}] 
  \[G_n^r(x) = \sum_{\sigma \in G_n^r} x^{|\mathcal{D}_r(\sigma)|}\] 
  has only real roots.
 \end{cor}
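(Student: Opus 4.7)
The plan is to derive the real-rootedness of $G_n^r(x)$ from the stability of the multivariate polynomial $G_n^r(\mathbold{x},\mathbold{y};\mathbold{q})$ established in the previous theorem, by chaining together the stability-preserving operations of Lemma~\ref{lem:stable}.

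First, I would specialize each $q_i$ to $1$ (permitted by Lemma~\ref{lem:stable}(3), since $1\ge 0$), collapsing the factor $\mathbold{q}^{\mathcal{N}(\sigma)}$ to $1$ for every $\sigma$. Next, I specialize each $y_i$ to $1$ by the same lemma, eliminating the $\mathbold{y}^{\mathcal{AT}_r(\sigma)}$ factor. Finally, I diagonalize the $x$-variables by setting $x_i=x$ for every $i$ (Lemma~\ref{lem:stable}(2)), producing the real stable univariate polynomial
\[
\sum_{\sigma \in G_n^r} x^{|\mathcal{DT}_r(\sigma)|}.
\]
Since stability of a univariate real polynomial is equivalent to having only real roots, I am done as soon as this polynomial is identified with $G_n^r(x)$.

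The identification amounts to checking $|\mathcal{DT}_r(\sigma)| = |\mathcal{D}_r(\sigma)|$, i.e.\ that the map sending a descent index $i$ to its top $\max(|\sigma_i|,|\sigma_{i+1}|)$ is injective. Since $|\sigma_1|,\ldots,|\sigma_n|$ is a permutation of $[n]$, any collision between the tops at two descent positions $i<j$ forces $j=i+1$, with $v := |\sigma_{i+1}|$ dominating both its neighbors. A short case analysis using the prescribed total order rules this out: if $\sigma_{i+1}$ is uncolored, then $\sigma_i > \sigma_{i+1}$ already forces $|\sigma_i| > |\sigma_{i+1}|$; while if $\sigma_{i+1}$ is colored, then $\sigma_{i+1} > \sigma_{i+2}$ forces $\sigma_{i+2}$ to lie in an absolute-value class strictly greater than $k$, giving $|\sigma_{i+2}| > |\sigma_{i+1}|$. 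Either contradicts $v$ being the common maximum. This injectivity bookkeeping is the only delicate point in the argument; everything else is a direct invocation of Lemma~\ref{lem:stable}.
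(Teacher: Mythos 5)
Your proposal is correct and follows essentially the same route as the paper: specialize the $q_i$ and $y_i$ variables to $1$ and diagonalize $\mathbold{x}$ in the stable polynomial $G_n^r(\mathbold{x},\mathbold{y};\mathbold{q})$ via Lemma~\ref{lem:stable}, the only addition being that you spell out the (correct, and implicitly used in the paper) identity $|\mathcal{DT}_r(\sigma)|=|\mathcal{D}_r(\sigma)|$ by checking that adjacent descents cannot share a descent top under the chosen total order. The argument is fine; just note the stray symbol ``$k$'' in your colored case should read $v=|\sigma_{i+1}|$.
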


\section{Stable $\widetilde{W}$-Eulerian Polynomials}
\label{sec:affine}

Dilks, Petersen and Stembridge studied Eulerian-like polynomials associated
to affine Weyl groups. They defined the so-called ``affine''  $\widetilde{W}$-Eulerian polynomials
as the ``affine descent''-generating polynomials over the corresponding finite Weyl group. 
In~\cite{DPS09}, they showed that the (univariate) $\widetilde{W}$-Eulerian polynomials have 
only real roots for types ${A}$ and ${C}$, and also for the exceptional types.
We strengthen these results for types ${A}$ and ${C}$ by giving multivariate stable 
refinements of these polynomials as well.

\subsection{Affine Eulerian Polynomials of Type ${A}$}
\label{sec:typetildeA}

Let $A_n$ denote the Coxeter group of type $A$ of rank $n$. The affine descents of 
type $A$ contain the (ordinary) descents of type $A$ and an extra ``affine'' 
descent at $0$ if and only if, 
$\sigma_{n+1} > \sigma_1$, where $\sigma = (\sigma_1, \dots, \sigma_{n+1}) \in {A}_{n}.$ 
Formally,
\[
  \widetilde{\mathcal{D}}_{A}(\sigma) = \mathcal{D}_{A}(\sigma) \cup \{0: \sigma_{n+1} > \sigma_1\}.
\]
See Section 5.1 in~\cite{DPS09} for further details.

The definitions of descent top and ascent top sets for type $A$ can be extended in the obvious way. For $\sigma \in A_n$,
\begin{align*}
  \widetilde{\mathcal{DT}}_{A}(\sigma) &= \mathcal{DT}_{A}(\sigma) 
  \cup \{\sigma_{n+1} : \sigma_{n+1} > \sigma_1\},\\ 
  \widetilde{\mathcal{AT}}_{A}(\sigma) &= \mathcal{AT}_{A}(\sigma) 
  \cup \{\sigma_{1} : \sigma_{n+1} < \sigma_1\}
\end{align*}
and we obtain the following result.

\begin{thm} 
  \[
    \widetilde A_{n}(\mathbold{x},\mathbold{y}) =
    \sum_{\sigma \in A_n} \mathbold{x}^{\widetilde{\mathcal{DT}}_{A} (\sigma)} 
    \mathbold{y}^{\widetilde{\mathcal{AT}}_{A} (\sigma)}
  \] is stable.
\end{thm}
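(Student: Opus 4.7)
My plan is to exploit the cyclic nature of the affine statistics and reduce the problem to the already-established stability of $A_{n-1}(\mathbold{x},\mathbold{y})$ from Theorem~\ref{thm:typeA}, rather than set up a recursion and appeal to Theorem~\ref{thm:stable} as in the preceding sections. Reading off the definitions, $\widetilde{\mathcal{DT}}_A(\sigma)$ is precisely the collection of $\max(\sigma_i,\sigma_{i+1})$ over all $n+1$ cyclic pairs with $\sigma_i > \sigma_{i+1}$ (indices taken mod $n+1$), and similarly for $\widetilde{\mathcal{AT}}_A(\sigma)$; because the entries of $\sigma$ are distinct, the ``$\cup$'' in the paper's definition is automatically a disjoint union of sets. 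Both statistics are therefore invariant under the cyclic shift $(\sigma_1,\dots,\sigma_{n+1}) \mapsto (\sigma_2,\dots,\sigma_{n+1},\sigma_1)$.

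Next I would use this invariance to reduce to $A_{n-1}$. Each cyclic rotation orbit of $A_n$ has size $n+1$, so choosing from each orbit the unique representative $\sigma^\ast$ with $\sigma^\ast_{n+1} = n+1$ gives a bijection with $A_{n-1}$ via $\tau = (\sigma^\ast_1,\dots,\sigma^\ast_n)$. Since $n+1$ is the maximum letter, the two cyclic pairs touching it, $(\tau_n,n+1)$ and $(n+1,\tau_1)$, are always an ascent and a descent, respectively, each with top $n+1$; the remaining $n-1$ cyclic pairs are precisely the linear pairs of $\tau$. This yields
\begin{equation*}
\widetilde{\mathcal{DT}}_A(\sigma^\ast) = \mathcal{DT}_A(\tau) \cup \{n+1\}, \qquad \widetilde{\mathcal{AT}}_A(\sigma^\ast) = \mathcal{AT}_A(\tau) \cup \{n+1\},
\end{equation*}
and summing over the $(n+1)$-element orbits gives the clean closed form
\begin{equation*}
\widetilde{A}_n(\mathbold{x},\mathbold{y}) = (n+1)\,x_{n+1}y_{n+1}\,A_{n-1}(\mathbold{x},\mathbold{y}).
\end{equation*}

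Stability is then immediate: by Theorem~\ref{thm:typeA}, $A_{n-1}(\mathbold{x},\mathbold{y})$ is real stable, and multiplication by the positive scalar $n+1$ and by the individual variables $x_{n+1}, y_{n+1}$ (neither of which vanishes on $\mathcal{H}_+$) preserves real stability. In contrast to the recursion-based arguments of earlier sections, no direct appeal to Theorem~\ref{thm:stable} is needed here; all the stability-preservation work is already packaged inside Theorem~\ref{thm:typeA}. The main thing to verify is the cyclic reinterpretation of $\widetilde{\mathcal{DT}}_A$ and $\widetilde{\mathcal{AT}}_A$, which is essentially a rereading of the definitions, so I do not expect any real obstacle; the genuine difficulties lie in the affine $B$, $C$, $D$ cases that follow, where no analogous ``free'' reduction to a non-affine result is available.
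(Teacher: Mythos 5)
Your proposal is correct and follows essentially the same route as the paper: your cyclic-orbit argument (canonical representative ending in $n+1$, the other $n$ elements giving a permutation in $A_{n-1}$, each orbit of size $n+1$) is exactly the paper's proof of Lemma~\ref{lem:FulmanPetersenA}, i.e.\ the identity $\widetilde{A}_n(\mathbold{x},\mathbold{y}) = (n+1)x_{n+1}y_{n+1}A_{n-1}(\mathbold{x},\mathbold{y})$, stated there as a refinement of Petersen's cyclic-rotation argument. The paper then concludes stability just as you do, from Theorem~\ref{thm:typeA} together with the observation that multiplication by $(n+1)x_{n+1}y_{n+1}$ preserves stability, with no appeal to Theorem~\ref{thm:stable}.
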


\begin{proof}
  This statement is immediate once we establish the Lemma~\ref{lem:FulmanPetersenA}. 
Stability follows, since $A_n(\mathbold{x},\mathbold{y})$ is stable and the operator on the right-hand side 
is clearly stability-preserving.
\end{proof}

\begin{lem} For $n > 0$, we have
\label{lem:FulmanPetersenA}
  \[
    \widetilde{A}_{n}(\mathbold{x},\mathbold{y}) = (n+1) x_{n+1}y_{n+1} {A}_{n-1}(\mathbold{x},\mathbold{y}).
  \]
\end{lem}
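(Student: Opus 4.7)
The plan is to exploit the cyclic invariance of the affine statistics.  For $\sigma=(\sigma_1,\ldots,\sigma_{n+1})\in A_n$, the pair $(\widetilde{\mathcal{DT}}_A(\sigma),\widetilde{\mathcal{AT}}_A(\sigma))$ records the larger element of each of the $n+1$ cyclic adjacent pairs $(\sigma_i,\sigma_{i+1})$ for $i=1,\ldots,n+1$ (with the convention $\sigma_{n+2}:=\sigma_1$), the final pair being precisely what the affine term encodes.  Consequently the monomial $\mathbold{x}^{\widetilde{\mathcal{DT}}_A(\sigma)}\mathbold{y}^{\widetilde{\mathcal{AT}}_A(\sigma)}$ is invariant under the cyclic rotation $(\sigma_1,\ldots,\sigma_{n+1})\mapsto(\sigma_2,\ldots,\sigma_{n+1},\sigma_1)$.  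Since any permutation has distinct entries, no nontrivial rotation fixes $\sigma$, and so $A_n$ decomposes into cyclic orbits of size exactly $n+1$.

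For each orbit I would single out the unique representative $\sigma'$ with $\sigma'_{n+1}=n+1$.  For this choice, the affine clause adds $n+1$ to $\widetilde{\mathcal{DT}}_A(\sigma')$ (because $\sigma'_{n+1}>\sigma'_1$) and nothing to $\widetilde{\mathcal{AT}}_A(\sigma')$; conversely, position $n$ is an ordinary ascent of $\sigma'$ with top $n+1$, so $n+1\in\widetilde{\mathcal{AT}}_A(\sigma')$ as well.  Setting $\tau:=(\sigma'_1,\ldots,\sigma'_n)\in A_{n-1}$, direct inspection yields
\[
\widetilde{\mathcal{DT}}_A(\sigma')=\mathcal{DT}_A(\tau)\cup\{n+1\},\qquad \widetilde{\mathcal{AT}}_A(\sigma')=\mathcal{AT}_A(\tau)\cup\{n+1\}.
\]
Hence the orbit of $\sigma'$ contributes $(n+1)\,x_{n+1}y_{n+1}\,\mathbold{x}^{\mathcal{DT}_A(\tau)}\mathbold{y}^{\mathcal{AT}_A(\tau)}$ to $\widetilde{A}_n(\mathbold{x},\mathbold{y})$.

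Summing over orbits, equivalently over $\tau\in A_{n-1}$, immediately produces $(n+1)\,x_{n+1}y_{n+1}\,A_{n-1}(\mathbold{x},\mathbold{y})$.  The main point requiring care is the bookkeeping around the two adjacencies $(\sigma'_n,\sigma'_{n+1})$ and $(\sigma'_{n+1},\sigma'_1)$ after placing $n+1$ at the end: both have top $n+1$, one is an ordinary ascent and the other a (cyclic) descent, and one must verify that this contributes $n+1$ exactly once to each of $\widetilde{\mathcal{DT}}_A(\sigma')$ and $\widetilde{\mathcal{AT}}_A(\sigma')$ rather than twice or not at all.  Once this is clear, the remaining computations are routine.
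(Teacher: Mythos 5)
Your proof is correct and takes essentially the same approach as the paper: the paper appends $n+1$ to $\tau\in A_{n-1}$ and cycles to produce the $n+1$ elements of an orbit, while you decompose $A_n$ into cyclic orbits and pick the representative with $n+1$ in the last slot; these are the same bijection run in opposite directions, with the same use of the cyclic invariance of the affine top statistics and the same contribution $x_{n+1}y_{n+1}\mathbold{x}^{\mathcal{DT}_A(\tau)}\mathbold{y}^{\mathcal{AT}_A(\tau)}$ per orbit.
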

\begin{proof}
  Consider a permutation $\sigma \in A_{n-1}$ with ascent top set $\mathcal{A}$ and 
  descent top set $\mathcal{D}$. We will modify it to obtain a permutation in $A_{n}$ with
  affine ascent top set $\mathcal{A} \cup \{n+1\}$ and affine descent top set 
  $\mathcal{D} \cup \{n+1\}$. 
  Append $(n+1)$ to the end of $\sigma$ and pick a cyclic rotation of the newly obtained 
  permutation. The new permutation will have the same affine ascent top and affine descent 
  top sets as the ascent top and descent top sets $\sigma$ had and in addition it will have
  $(n+1)$ both as an affine ascent top and as an affine descent top. To conclude the proof, 
  note that there are exactly $n+1$ cyclic rotations. This is essentially a refinement of 
  the proof of Proposition 1.1 of \cite{Pet05}.
\end{proof}

By diagonalizing $\mathbold{x}$ and specializing $\mathbold{y}$ to 1, Lemma \ref{lem:FulmanPetersenA} 
reduces to an identity discovered by Fulman (Corollary 1 in \cite{Ful00}) and
we recover a result of Dilks, Petersen, and Stembridge:
\begin{cor}[see Section 4 of \cite{DPS09}]
  \[
    \widetilde{A}_{n}(x) = \sum_{\sigma \in A_n} x^{|\widetilde{\mathcal{D}}_{A}(\sigma)|}
  \] has only real roots.
\end{cor}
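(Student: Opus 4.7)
The plan is to derive the real-rootedness of $\widetilde{A}_n(x)$ from the multivariate stability of $\widetilde{A}_n(\mathbold{x}, \mathbold{y})$ established in the preceding theorem, following the template used throughout the paper: multivariate stability specializes to univariate real-rootedness via Lemma \ref{lem:stable}.

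First I would diagonalize $\mathbold{x}$ by setting every $x_i = x$, and then specialize $\mathbold{y}$ by setting every $y_i = 1$. Both operations preserve real stability by Lemma \ref{lem:stable}, so the resulting univariate polynomial is stable. Next I would verify that this specialization equals $\widetilde{A}_n(x)$: for each $\sigma \in A_n$, the monomial $\mathbold{x}^{\widetilde{\mathcal{DT}}_{A}(\sigma)} \mathbold{y}^{\widetilde{\mathcal{AT}}_{A}(\sigma)}$ becomes $x^{|\widetilde{\mathcal{DT}}_{A}(\sigma)|}$, and a short combinatorial check confirms $|\widetilde{\mathcal{DT}}_{A}(\sigma)| = |\widetilde{\mathcal{D}}_{A}(\sigma)|$. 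The one minor point to verify is that the extra affine descent at $0$ (if present) contributes a genuinely new element $\sigma_{n+1}$ not already in $\mathcal{DT}_{A}(\sigma)$; this holds because $\sigma_{n+1}\in\mathcal{DT}_A(\sigma)$ would force $\sigma_n>\sigma_{n+1}$ with top $\sigma_n\neq\sigma_{n+1}$. Since a univariate real polynomial is stable if and only if it has only real roots, the corollary follows.

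Alternatively, and even more directly, Lemma \ref{lem:FulmanPetersenA} specializes under the same substitutions to the closed form $\widetilde{A}_{n}(x) = (n+1)\, x\, A_{n-1}(x)$, whose roots are $0$ together with the roots of $A_{n-1}(x)$, all real by Theorem \ref{thm:typeAreal}. There is essentially no main obstacle remaining at this stage: all the work has already been done in establishing the stability of $\widetilde{A}_{n}(\mathbold{x},\mathbold{y})$ and the product formula of Lemma \ref{lem:FulmanPetersenA}, and the corollary is just a one-line specialization.
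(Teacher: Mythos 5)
Your proposal is correct and follows essentially the same route as the paper: diagonalizing $\mathbold{x}$ and specializing $\mathbold{y}$ to $1$ in the stable polynomial $\widetilde{A}_n(\mathbold{x},\mathbold{y})$ (equivalently, in Lemma~\ref{lem:FulmanPetersenA}, which specializes to Fulman's identity $\widetilde{A}_n(x)=(n+1)xA_{n-1}(x)$) yields the corollary. Your check that $|\widetilde{\mathcal{DT}}_A(\sigma)|=|\widetilde{\mathcal{D}}_A(\sigma)|$ is the right minor verification, so nothing further is needed.
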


One can construct a recurrence from Lemma \ref{lem:FulmanPetersenA} for the affine Eulerian
polynomials $\widetilde{A}_{n}(x)$ as well, but this recurrence will not preserve stability.
\subsection{Affine Eulerian Polynomials of Type ${C}$}
\label{sec:typetildeC}
Let $C_n$ denote the Coxeter group of type $C$ of rank $n$.
Affine descents of type $C$ consist of the ordinary descent set of type 
$C$, which coincides with the descent set of type $B$ 
(see Proposition~\ref{prop:typeBdescents} for type $B$ descents) and an 
extra ``affine'' descent at $0$ when $\sigma_n > 0$. Formally,
\[
  \widetilde{\mathcal{D}}_{C}(\sigma) = \mathcal{D}_{C}(\sigma) \cup \{0: \sigma_n > 0\}.\\
\]
See Section 5.2 of~\cite{DPS09} for further details.

As in type ${A}$, the definition of the type ${C}$ affine ascent and descent top sets can 
be adapted from those of type $C$ (equivalently, type $B$). For $\sigma \in C_n$, let

\begin{align*}
 \widetilde{\mathcal{DT}}_{C}(\sigma) &= \mathcal{DT}_{B}(\sigma) 
 \cup \{\sigma_n : \sigma_n > 0\},\\
  \widetilde{\mathcal{AT}}_{C}(\sigma) &= \mathcal{AT}_{B}(\sigma) 
 \cup \{\sigma_n : \sigma_n < 0\}.
\end{align*}

\begin{thm}
\[\widetilde C_{n}(\mathbold{x},\mathbold{y}) = \sum_{\sigma\in C_n} 
\mathbold{x}^{\widetilde{\mathcal{DT}}_{C}(\sigma)} 
\mathbold{y}^{\widetilde{\mathcal{AT}}_{C}(\sigma)}\]
is stable.
\end{thm}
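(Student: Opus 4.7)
The plan is to proceed as in the earlier sections by induction on $n$, establishing a recurrence $\widetilde{C}_n = T(\widetilde{C}_{n-1})$ for some stability-preserving linear operator $T$. The base case $\widetilde{C}_1(x_1,y_1) = 2 x_1 y_1$ follows by direct enumeration of $C_1 = B_1$ and is trivially stable.

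The central claim to establish is the recurrence
\[
\widetilde{C}_n(\mathbold{x}, \mathbold{y}) = 2 x_n y_n \, \bpartial \, \widetilde{C}_{n-1}(\mathbold{x}, \mathbold{y}), \qquad n \geq 2.
\]
For each $\sigma \in C_n = B_n$, the monomial contributed to $\widetilde{C}_n$ is $\mathbold{x}^{\mathcal{DT}_B(\sigma)} \mathbold{y}^{\mathcal{AT}_B(\sigma)}$ times an affine factor equal to $x_{|\sigma_n|}$ (if $\sigma_n > 0$) or $y_{|\sigma_n|}$ (if $\sigma_n < 0$). I would partition $B_n$ by the position $k$ and sign $\epsilon$ of the entry with absolute value $n$, writing each $\sigma$ as an insertion of $\epsilon n$ into some $\sigma' \in B_{n-1}$. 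For \emph{end insertions} ($k = n$), $\sigma_n = \epsilon n$, so the affine factor ($x_n$ or $y_n$) combines with the new comparison at position $n$ (contributing $y_n$ or $x_n$, respectively) to give $x_n y_n \mathbold{x}^{\mathcal{DT}_B(\sigma')} \mathbold{y}^{\mathcal{AT}_B(\sigma')}$ in both cases, totalling $2 x_n y_n B_{n-1}(\mathbold{x}, \mathbold{y}; 1)$. For \emph{non-end insertions} ($k < n$), the affine factor $\alpha(\sigma') \in \{x_{|\sigma'_{n-1}|}, y_{|\sigma'_{n-1}|}\}$ depends only on $\sigma'$; by the type $B$ insertion analysis of Section~\ref{sec:typeB}, the sum over $k < n$ and both signs of the remaining monomials equals $2 x_n y_n \bpartial \bigl[\mathbold{x}^{\mathcal{DT}_B(\sigma')} \mathbold{y}^{\mathcal{AT}_B(\sigma')}\bigr]$.

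The crucial technical point is that $\alpha(\sigma')$ is always a variable that does \emph{not} appear in $\mathbold{x}^{\mathcal{DT}_B(\sigma')} \mathbold{y}^{\mathcal{AT}_B(\sigma')}$: a short case analysis of the comparison between $\sigma'_{n-2}$ and $\sigma'_{n-1}$ shows that $|\sigma'_{n-1}| \in \mathcal{DT}_B(\sigma')$ forces $\sigma'_{n-1} < 0$ and $|\sigma'_{n-1}| \in \mathcal{AT}_B(\sigma')$ forces $\sigma'_{n-1} > 0$, the opposite of where $\alpha(\sigma')$ places its variable. Hence $\bpartial \alpha(\sigma') = 1$ and the product rule yields
\[
\alpha(\sigma') \, \bpartial \bigl[\mathbold{x}^{\mathcal{DT}_B(\sigma')} \mathbold{y}^{\mathcal{AT}_B(\sigma')}\bigr] = \bpartial\bigl[\alpha(\sigma') \mathbold{x}^{\mathcal{DT}_B(\sigma')} \mathbold{y}^{\mathcal{AT}_B(\sigma')}\bigr] - \mathbold{x}^{\mathcal{DT}_B(\sigma')} \mathbold{y}^{\mathcal{AT}_B(\sigma')}.
\]
Summing over $\sigma' \in B_{n-1}$, the non-end contribution becomes $2 x_n y_n\bigl(\bpartial \widetilde{C}_{n-1}(\mathbold{x},\mathbold{y}) - B_{n-1}(\mathbold{x},\mathbold{y};1)\bigr)$, and adding the end contribution makes the $B_{n-1}$ terms cancel to give the claimed recurrence.

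It remains to check via Theorem~\ref{thm:stable} that $T = 2 x_n y_n \bpartial$ is stability-preserving. Direct computation gives
\[
T\bigl((\mathbold{x}+\mathbold{u})^{[n-1]}(\mathbold{y}+\mathbold{v})^{[n-1]}\bigr) = 2 x_n y_n (\mathbold{x}+\mathbold{u})^{[n-1]}(\mathbold{y}+\mathbold{v})^{[n-1]} \sum_{i=1}^{n-1}\left(\frac{1}{x_i+u_i} + \frac{1}{y_i+v_i}\right),
\]
and the final sum lies in $\mathcal{H}_-$ when all variables are in $\mathcal{H}_+$ (each reciprocal lies in $\mathcal{H}_-$, which is closed under addition), so the whole product is nonzero there. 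I expect the main obstacle to be the derivation of the recurrence itself, and in particular the case analysis showing that $\alpha(\sigma')$ is disjoint from the descent-top/ascent-top monomial of $\sigma'$ (so that the product rule applies cleanly and the cancellation of the $B_{n-1}$ pieces occurs); once that is in place, the stability check proceeds exactly as in the earlier proofs of this paper.
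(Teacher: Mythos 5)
Your proof is correct and follows the same strategy as the paper: establish the recurrence $\widetilde{C}_n = 2x_ny_n\bpartial\widetilde{C}_{n-1}$ and apply Theorem~\ref{thm:stable} to the operator $T = 2x_ny_n\bpartial$. The paper simply asserts the recurrence; your derivation of it (partitioning by the position of $\pm n$, treating end insertions and internal insertions separately, and using the Leibniz identity to exchange $\alpha(\sigma')\bpartial m$ for $\bpartial(\alpha m) - m$ so the $B_{n-1}$ pieces cancel) is a correct and welcome filling-in of that detail. One small correction to your exposition: the disjointness of $\alpha(\sigma')$ from $\mathbold{x}^{\mathcal{DT}_B(\sigma')}\mathbold{y}^{\mathcal{AT}_B(\sigma')}$ is not needed for the product rule or the cancellation---the identity $\alpha\bpartial m = \bpartial(\alpha m) - (\bpartial\alpha)m$ and the resulting cancellation of the $B_{n-1}$ terms hold unconditionally. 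Where the disjointness genuinely matters is in showing that $\widetilde{C}_{n-1}(\mathbold{x},\mathbold{y})$ is \emph{multiaffine}, which is the hypothesis one must check in order to apply Theorem~\ref{thm:stable} to the operator $T$; your case analysis of the sign of $\sigma'_{n-1}$ is exactly what establishes that, so the observation is needed, just for a different step than the one you attached it to.
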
 
\begin{proof}
$\widetilde{C}_1(x_1,y_1) = 2x_1y_1$ and the following recurrence holds for $n>1$:
\[
  \widetilde{C}_{n}(\mathbold{x},\mathbold{y}) = 2x_ny_n \bpartial \widetilde{C}_{n-1}(\mathbold{x},\mathbold{y}).
  \qedhere
\]
\end{proof}
We note that a very similar recurrence also appeared in \cite{HV11} (without the factor of two and
with a different initial value), in connection with stable Eulerian polynomials over Stirling permutations.

There is also a direct connection between the polynomials 
$\widetilde{C}_n(\mathbold{x},\mathbold{y})$ and $A_n(\mathbold{x},\mathbold{y})$.
\begin{prop}
  \[
    \widetilde{C}_n(x_1, \dots, x_n, y_1, \dots, y_n) = 
    2^n x_ny_n A_{n-1}(x_0, \dots, x_{n-1}, y_0, \dots, y_{n-1}). 
  \]
\end{prop}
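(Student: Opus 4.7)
The plan is to prove the proposition by induction on $n$, coupling the recursion for $\widetilde{C}_n$ established in the proof of the stability theorem with the type $A$ recursion \eqref{eq:recurrencetypeA}. The payoff is that, up to a uniform factor of $2$ and a shift of variable indices, these two recurrences are essentially the same, so the product form $2^n x_n y_n A_{n-1}(\cdots)$ is preserved from one step to the next. The base case $n=1$ is immediate: both sides equal $2x_1 y_1$, since $\widetilde{C}_1(x_1,y_1)=2x_1y_1$ by the initial condition and $A_0(x_0,y_0)\equiv 1$.

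For the inductive step, I would first rewrite the type $A$ recurrence \eqref{eq:recurrencetypeA} in its shifted form, obtained by substituting $x_i \mapsto x_{i-1}$, $y_i \mapsto y_{i-1}$:
\begin{equation*}
A_{n-1}(x_0,\dots,x_{n-1},y_0,\dots,y_{n-1}) = (x_{n-1}+y_{n-1})\,A_{n-2}(\cdot) + x_{n-1}y_{n-1}\sum_{i=0}^{n-2}\left(\tfrac{\partial}{\partial x_i}+\tfrac{\partial}{\partial y_i}\right)A_{n-2}(\cdot),
\end{equation*}
where $A_{n-2}(\cdot) := A_{n-2}(x_0,\dots,x_{n-2},y_0,\dots,y_{n-2})$. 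Crucially, $A_{n-2}(\cdot)$ does not depend on $x_0$ or $y_0$, since after the shift its descent and ascent tops take values in $\{1,\dots,n-2\}$. Next, assume the proposition for $n-1$, so that $\widetilde{C}_{n-1}=2^{n-1}x_{n-1}y_{n-1}A_{n-2}(\cdot)$, and substitute this into the recurrence $\widetilde{C}_n=2x_ny_n\,\bpartial\widetilde{C}_{n-1}$. Applying the Leibniz rule splits $\bpartial\bigl[x_{n-1}y_{n-1}A_{n-2}(\cdot)\bigr]$ into two pieces: the $i=n-1$ summands contribute $(x_{n-1}+y_{n-1})A_{n-2}(\cdot)$, while the $i<n-1$ summands contribute $x_{n-1}y_{n-1}\sum_{i=1}^{n-2}(\partial/\partial x_i+\partial/\partial y_i)A_{n-2}(\cdot)$. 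Together these are precisely the right-hand side of the shifted $A$-recurrence, and we obtain $\widetilde{C}_n = 2^n x_n y_n A_{n-1}(x_0,\dots,x_{n-1},y_0,\dots,y_{n-1})$, closing the induction.

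There is no real obstacle beyond careful bookkeeping of the shifted indices: the content of the proposition is just the observation that, up to the overall factor of $2$, the $\widetilde{C}$-recurrence supplies the ``derivative piece'' of the $A$-recurrence, while the ``multiplicative piece'' $(x_{n-1}+y_{n-1})A_{n-2}(\cdot)$ is produced automatically when $\bpartial$ hits the leading factor $x_{n-1}y_{n-1}$. The only minor subtlety is reconciling the summation range $i=1,\dots,n-1$ in the $\widetilde{C}$-recurrence with the range $i=0,\dots,n-2$ in the shifted $A$-recurrence; this is harmless because $A_{n-2}(\cdot)$ does not involve $x_0$ or $y_0$, so the $i=0$ term vanishes and may be added for free. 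A combinatorial alternative via the absolute-value map $C_n \to A_{n-1}$ is available, but seems less transparent than this recurrence-matching argument.
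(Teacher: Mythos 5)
Your proof is correct, and it takes a genuinely different route from the one the paper intends. The paper's proof is a one-line reference to ``a similar argument as Lemma~\ref{lem:FulmanPetersenA},'' which is a bijective/combinatorial argument: starting from $\tau\in A_{n-1}$, one produces an element of $C_n$ by choosing one of $2^n$ sign patterns, and checks that the resulting affine type~$C$ ascent and descent top sets are obtained from the type~$A$ ones by the index shift $j\mapsto j-1$, with the maximal letter $n$ always appearing as both an affine ascent top and an affine descent top (producing the prefactor $x_ny_n$). Your argument instead unwinds the two recurrences $\widetilde{C}_n = 2x_ny_n\,\bpartial\,\widetilde{C}_{n-1}$ and $A_{n-1} = (x_n+y_n)A_{n-2} + x_ny_n\,\bpartial\,A_{n-2}$ and shows they reproduce each other under the shift, with the Leibniz term $\bpartial(x_{n-1}y_{n-1})$ accounting for the ``missing'' multiplicative piece $(x_{n-1}+y_{n-1})A_{n-2}$. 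Both proofs are valid. The recurrence-matching argument is more self-contained as written---it invokes only identities already displayed in the paper and requires no new bijection---and cleanly explains the index shift via the observation that $A_{n-2}$ does not involve $x_0,y_0$. The combinatorial route, by contrast, makes the origin of the factor $2^n$ (sign choices) and of $x_ny_n$ (the letter $n$ is always an affine peak and valley simultaneously in type~$C$) transparent at a glance, and is the more natural companion to the proof of Lemma~\ref{lem:FulmanPetersenA}. Your bookkeeping of the summation range of $\bpartial$ and of the shifted variable lists is accurate, and the base case is handled correctly.
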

\begin{proof}
Follows by a similar argument as Lemma \ref{lem:FulmanPetersenA}.
\end{proof}

Once again, this gives a refinement of the univariate identity by Fulman \cite{Ful01}. We mention
that multivariate refinements (different from the above) and respective refinements for the
identities for $\widetilde{A}_n(\mathbold{x},\mathbold{y})$ and 
$\widetilde{C}_n(\mathbold{x},\mathbold{y})$
have appeared in \cite{DPS09}.

\section{Towards Stable Refinements of $D(x)$, $\widetilde{B}(x)$, $\widetilde{D}(x)$}
\label{sec:typeD}

\subsection{Eulerian Polynomials of Type $D$.}
Let $D_n$ denote the Coxeter group of type $D$ of rank $n$.  Recall that $D_n$ can be thought of as the
(order 2) subgroup of $B_n$ consisting of all the signed permutations with an even number of negative entries.  
Specifically, $D_n$ has generators $S = \{s_0, s_1, \ldots, s_{n}\}$, where $s_0=(-2,1)(-1,2)$ and $s_i = (i,i+1)$ for $1 \leq i \leq n$.

\begin{prop}[Proposition 8.22 of~\cite{BB05}]
 Given $\sigma =  (\sigma_1, \dotsc, \sigma_n) \in D_n$ written in its window notation,
 \begin{equation*}
  \mathcal{D}_D(\sigma) = \{i \in [n] : \sigma_{i-1} > \sigma_{i}\},
 \end{equation*}
  where $\sigma_0 \buildrel\mathrm{def}\over= -\sigma_2$. 
\end{prop}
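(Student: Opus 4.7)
The plan is to invoke the defining relation $i \in \mathcal{D}_D(\sigma) \Leftrightarrow \ell_D(\sigma s_i) < \ell_D(\sigma)$ and compute the length change for each generator using the standard combinatorial length formula for type $D$. Specifically, I will use
\[
\ell_D(\sigma) = \mathrm{inv}(\sigma) + \mathrm{nsp}(\sigma),
\]
where $\mathrm{inv}(\sigma) = |\{(j,k) : 1 \le j < k \le n,\ \sigma_j > \sigma_k\}|$ is the inversion count of the window notation and $\mathrm{nsp}(\sigma) = |\{(j,k) : 1 \le j < k \le n,\ \sigma_j + \sigma_k < 0\}|$ is the negative-sum-pair count (this is the standard formula recorded as Proposition~8.2.1 of \cite{BB05}).

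For each generator $s_{i-1} = (i-1,i)(-(i-1),-i)$ with $i \ge 2$, right multiplication simply swaps the entries at positions $i-1$ and $i$ in the window notation. A short bookkeeping argument shows that, for every third position $k \notin \{i-1, i\}$, the inversion and negative-sum contributions involving $(i-1, k)$ and $(i, k)$ permute among themselves and hence cancel; the only net change comes from the pair $(i-1, i)$ itself, where the inversion test flips while the negative-sum test (symmetric in its two entries) is unchanged. Hence $\ell_D(\sigma s_{i-1}) < \ell_D(\sigma)$ iff $\sigma_{i-1} > \sigma_i$, matching the claim at indices $i \in \{2,\ldots,n\}$.

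The interesting case is $s_0 = (-2, 1)(-1, 2)$, where right multiplication replaces $(\sigma_1, \sigma_2)$ by $(-\sigma_2, -\sigma_1)$ and fixes the rest of the window. For each $k \ge 3$, the combined inversion-plus-negative-sum contribution from the pairs $(1, k)$ and $(2, k)$ counts the elements of the multiset $\{\sigma_1, -\sigma_1, \sigma_2, -\sigma_2\}$ that exceed $\sigma_k$, and this multiset is manifestly preserved by $(\sigma_1, \sigma_2) \mapsto (-\sigma_2, -\sigma_1)$. The pair $(1, 2)$ itself contributes an inversion test $\sigma_1 > \sigma_2$ that is \emph{invariant} under the substitution (both entries are negated) and a negative-sum test $\sigma_1 + \sigma_2 < 0$ that is \emph{toggled}; thus $\ell_D$ changes by exactly $\pm 1$, strictly decreasing precisely when $\sigma_1 + \sigma_2 < 0$, i.e.\ when $-\sigma_2 > \sigma_1$. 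Adopting the convention $\sigma_0 := -\sigma_2$ then places this case at $i=1$ inside the uniform formula $\{i \in [n] : \sigma_{i-1} > \sigma_i\}$.

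The main obstacle I anticipate is the bookkeeping for $s_0$: one must verify cleanly that the many individual inversion and negative-sum flips at third positions really combine as claimed, without resorting to a sprawling case analysis on the signs of $\sigma_1, \sigma_2, \sigma_k$. If the direct pairing argument proves fragile, a clean alternative is to translate the descent test into the root-system condition $\sigma \cdot \alpha_i \in -\Phi^+$ for type $D_n$ and read off each case from the linear action $\sigma(e_j) = \mathrm{sgn}(\sigma_j)\, e_{|\sigma_j|}$; this bypasses the pairing calculation at the cost of invoking a bit more Lie-theoretic machinery.
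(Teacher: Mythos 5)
Your argument is correct. The paper does not prove this proposition --- it is stated as a direct citation to Bj\"orner and Brenti, so there is no in-text proof to compare against. Your derivation from the combinatorial length formula $\ell_D(\sigma) = \mathrm{inv}(\sigma) + \mathrm{nsp}(\sigma)$, reading off the descent criterion from the unit change in length under each generator, is the standard argument and is exactly what one would extract from the cited source. Two small remarks: (i) the ``exactly $\pm 1$'' step implicitly uses that the window entries have pairwise distinct absolute values, so none of the inversion or negative-sum tests can ever be a tie; this deserves one explicit sentence. (ii) Your anticipated worry about the $s_0$ bookkeeping is unfounded: the observation that, for each third position $k$, the combined inversion-plus-negative-sum count from the pairs $(1,k)$ and $(2,k)$ equals $|\{a \in \{\pm\sigma_1, \pm\sigma_2\} : a > \sigma_k\}|$, a quantity manifestly invariant under $(\sigma_1,\sigma_2) \mapsto (-\sigma_2,-\sigma_1)$, already disposes of the case analysis cleanly; you should present it exactly this way rather than hedging toward the root-system alternative.
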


Based on the real-rootedness results for types $A$, $B$, and the exceptional types (and some
computer evidence) Brenti conjectured the following.
\begin{conj}[Conjecture 5.1 in~\cite{Bre94}]
\label{conj:Brenti}
  The type $D$ Eulerian polynomial has only real roots. 
\end{conj}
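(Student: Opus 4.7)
The plan is to follow the same template that worked for types $A$, $B$, and $C$ in the earlier sections: define a well-chosen multivariate refinement $D_n(\mathbold{x},\mathbold{y})$ of the type $D$ Eulerian polynomial, find a linear operator $T$ in fresh variables $x_n, y_n$ such that $D_n = T(D_{n-1})$, and apply Theorem~\ref{thm:stable} to verify that $T$ preserves real stability. The univariate statement of Conjecture~\ref{conj:Brenti} would then follow by diagonalization and specialization, exactly as in the previous sections. Concretely, I would introduce type $D$ descent top and ascent top sets using the convention $\sigma_0 := -\sigma_2$, i.e.\ set $\mathcal{DT}_D(\sigma) = \{\max(|\sigma_{i-1}|,|\sigma_i|) : 1 \leq i \leq n,\ \sigma_{i-1} > \sigma_i\}$ (and symmetrically for $\mathcal{AT}_D(\sigma)$), compute $D_2$, $D_3$, $D_4$ by hand, track the effect of inserting $\pm n$ into an element of $D_{n-1}$, and attempt to extract a first-order recurrence of the form $D_n = T(D_{n-1})$ with $T$ polynomial in $x_n$, $y_n$, and $\bpartial$.

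The hard part---and presumably the reason Brenti's conjecture remains open---is that any such naive recurrence appears to fail for two compounding reasons. First, the ``virtual entry'' $\sigma_0 := -\sigma_2$ depends globally on the permutation, so inserting a new maximal element into position~$1$ or~$2$ changes $\sigma_0$ in a way that does not factor through a clean linear operator acting on $D_{n-1}(\mathbold{x},\mathbold{y})$. Second, the parity condition that $N(\sigma)$ be even makes the obvious ``insert $\pm n$'' map from $D_{n-1}$ to $D_n$ non-surjective, so elements of $D_n$ in which $n$ does not appear in the last window position must be enumerated separately. Taken together, these obstructions seem to push any recurrence into a more complicated form---second-order, or coupling $D_{n-1}$ with an auxiliary type $B$ or type $A$ polynomial---and it is not obvious that the resulting operator still falls within the scope of Theorem~\ref{thm:stable}.

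A secondary strategy I would try, should the direct recurrence fail to materialize, is to express $D_n(\mathbold{x},\mathbold{y})$ as a closed-form combination of $B_n(\mathbold{x},\mathbold{y};q)$ evaluated at chosen values of $q$. Since the parity of $N(\sigma)$ can be extracted from $B_n$ as a half-sum at $q=1$ and $q=-1$, and Corollary~\ref{cor:Bnminusone} identifies $B_n(\mathbold{x},\mathbold{y};-1)=(\mathbold{y}-\mathbold{x})^{[n]}$, there is some hope of realizing $D_n(\mathbold{x},\mathbold{y})$ as the image of a single stability-preserving operator applied to a known stable polynomial. The obstacle here is twofold: the type $D$ descent at position~$1$ uses $\sigma_0=-\sigma_2$ rather than $\sigma_0=0$, so any such identity must correct for this boundary mismatch; and the $q=-1$ contribution introduces signed terms, while Lemma~\ref{lem:stable} preserves stability but supplies no tool for verifying it under summation of polynomials. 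Either way, producing the right auxiliary polynomial---and proving its stability---is where a genuinely new idea would be required, and is the step I expect to be the main obstacle.
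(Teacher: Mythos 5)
The statement you were asked to prove is a \emph{conjecture}, and the paper does not prove it. It is explicitly flagged as ``the remaining unproven part of Conjecture~\ref{conj:brenti},'' and the authors only report in a note added in proof that it was subsequently resolved by Savage and the first author in a separate paper. There is therefore no proof in this paper to compare against; what the paper offers instead is an account of why the natural multivariate-refinement approaches fail, and your proposal is essentially a restatement of that negative analysis rather than a proof.

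Your first route---setting $\sigma_0 := -\sigma_2$, defining $\mathcal{DT}_D$ and $\mathcal{AT}_D$ by the type~$B$ recipe, and hunting for a first-order recurrence $D_n = T(D_{n-1})$---is precisely the na\"ive attempt recorded in \eqref{eq:typeDnaive}, and it fails even sooner than your two obstructions suggest: the resulting polynomial is not multiaffine (already $\sigma=123$ contributes $y_2^2 y_3$, because $\sigma_0=-2$ produces an ascent top of $2$ and $\sigma_1<\sigma_2$ produces a second ascent top of $2$), so Theorem~\ref{thm:stable} as stated does not apply, and the paper additionally exhibits an explicit upper-half-plane zero of $D_3^*(\mathbold{x},\mathbold{y})$. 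Had you carried out the $D_2,D_3$ computations you propose, you would have hit this wall immediately.

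Your second route---recovering the even-$N(\sigma)$ part from $B_n(\mathbold{x},\mathbold{y};q)$ by a $q=\pm1$ half-sum, using Corollary~\ref{cor:Bnminusone}---is in the spirit of Br\"and\'en's result on type~$B$ descents over $D_n$ (Corollary~6.10 of~\cite{Bra06}), which the paper quotes, but it is not the route the paper takes. The paper instead multivariate-izes Stembridge's identity $D_n(x) = B_n(x) - n2^{n-1}x\,A_{n-2}(x)$ to define $D_n(\mathbold{x},\mathbold{y}) = B_n(\mathbold{x},\mathbold{y};1) - n2^{n-1}x_n y_n A_{n-2}(\mathbold{x},\mathbold{y})$ in \eqref{eq:typeD}, and then uses Theorem~\ref{thm:checkstable} to show this candidate is already non-stable at $n=3$. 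The obstructions you name---the $\sigma_0 = -\sigma_2$ boundary mismatch, and the fact that stability is not closed under signed combinations---are correct, and your conclusion that a genuinely new idea is required is exactly the conclusion the authors reach; but neither you nor the paper supplies that idea here.
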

This was---and still is---the remaining unproven part of Conjecture~\ref{conj:brenti}.
Unfortunately, extending the type $B$ ascent top and descent top definitions in the na\"ive way does not work. If we let 
\begin{align}
  \label{eq:typeDnaive}
  \mathcal{DT}_D(\sigma) &= 
  \{ \max(|\sigma_i|,|\sigma_{i+1}|) : 0 \leq i \leq n-1, \, \sigma_i > \sigma_{i+1}\}, \text{ and } \\ 
  \notag \mathcal{AT}_D(\sigma) &= 
  \{ \max(|\sigma_i|,|\sigma_{i+1}|) : 0 \leq i \leq n-1, \, \sigma_i < \sigma_{i+1} \},
\end{align}
then the type $D$ Eulerian polynomial is not multiaffine (for example, the monomial corresponding to
$\sigma = 123$ is $y_2^2 y_3$). Furthermore, it fails to be stable. For $n=3$, we have 
\[
  D^*_3(\mathbold{x}, \mathbold{y}) = x_2^2 x_3 + 2 x_2 x_3 y_2 + x_3 y_2^2 + x_2^2 y_3 + 4 x_2 x_3 y_3 + 4 x_3^2 y_3 + 2 x_2 y_2 y_3 + 4 x_3 y_2 y_3 + y_2^2 y_3 + 4 x_3 y_3^2.
\]
When $y_2=y_3=x_3=2+i$, and $x_2=(-1 + 2i) (2i + \sqrt{3}) \approx -5.7321+1.4641i$ 
(which are all in the upper half plane), $D^*_3(\mathbold{x},\mathbold{y})=0$. (We use the 
$D^*(\mathbold{x},\mathbold{y})$ notation for the multivariate Eulerian polynomial of type $D$ to avoid confusion with a different multivariate generalization
which will appear later on.)

\medskip
Br\"and\'en used the refinement given in \eqref{eq:typeBqs} to prove an intriguing result about 
enumerating type $B$ descents over type $D$ permutations.

\begin{thm}[Corollary 6.10 in~\cite{Bra06}]
\begin{equation*}
 \sum_{\sigma\in D_n} x^{|\mathcal{D}_B(\sigma)|}
\end{equation*}
has only real roots.
\end{thm}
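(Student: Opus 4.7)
The plan is to reduce the statement to the real-rootedness of an explicit combination of two already-understood polynomials. First, I would observe that $D_n$ is precisely the kernel of the sign character $\delta\colon B_n \to \{\pm 1\}$ defined by $\delta(\sigma) = (-1)^{N(\sigma)}$, so that
\[
  \sum_{\sigma \in D_n} x^{|\mathcal{D}_B(\sigma)|}
  = \tfrac{1}{2} \sum_{\sigma \in B_n} \left(1 + (-1)^{N(\sigma)}\right) x^{|\mathcal{D}_B(\sigma)|}
  = \tfrac{1}{2}\bigl(B_n(x;1) + B_n(x;-1)\bigr)
  = \tfrac{1}{2}\bigl(B_n(x) + (1-x)^n\bigr),
\]
where the evaluation $B_n(x;-1) = (1-x)^n$ follows from Corollary~\ref{cor:Bnminusone} after specializing each $y_i=1$ and diagonalizing $x_i \mapsto x$. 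Thus the problem reduces to showing that $B_n(x) + (1-x)^n$ has only real roots.

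Both summands are individually real-rooted: $B_n(x)$ by Theorem~\ref{thm:typeBreal}, with all its roots in $(-\infty,0)$ by positivity of coefficients, and $(1-x)^n$ trivially, with a single root of multiplicity $n$ at $x=1$. To deduce real-rootedness of the sum, I would invoke the Hermite--Kakeya--Obreschkoff criterion: every non-negative combination $\alpha p + \beta q$ of two real-rooted polynomials $p,q$ is real-rooted if and only if $p$ and $q$ admit a common interlacer. Thus the task becomes to exhibit a common interlacer for $B_n(x)$ and $(1-x)^n$.

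The hard part will be producing this common interlacer, because all $n$ roots of $(1-x)^n$ collapse to a single point, whereas $B_n(x)$ has $n$ well-separated negative simple roots. A natural candidate is the type $A$ Eulerian polynomial $A_{n-1}(x) = B_n(x;0)$, which is known to interlace $B_n(x)$ and whose roots all lie strictly below $1$. In the spirit of this paper, I would try to establish the stronger statement that the one-parameter family $\{B_n(x;q)\}_{q \in [-1,1]}$ is pairwise compatible: Theorem~\ref{thm:Brenti} and Theorem~\ref{thm:multivariateBrenti} already yield this for $q \in [0,1]$, so the main obstacle is pushing compatibility through the interval $q \in [-1,0]$ all the way to the degenerate endpoint $q=-1$. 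The operator $(qx_n+y_n)+(1+q)x_ny_n\bpartial$ in \eqref{eq:Bqrecurrence} ceases to preserve stability once $q<0$, so the multivariate framework does not apply directly; as a substitute I would run a univariate induction using Brenti's recurrence for $B_n(x;q)$, tracking by a Rolle-type argument that $A_{n-1}(x)$ remains a common interlacer of $B_n(x;q)$ throughout $q \in [-1,1]$, with the boundary case $q=-1$ requiring special care because of the root collapse.
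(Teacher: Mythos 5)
The paper does not actually prove this theorem---it is cited from Br\"and\'en~\cite{Bra06} (Corollary 6.10) and used only as motivation---so your proposal must be judged on its own. Your reduction $\sum_{\sigma\in D_n} x^{|\mathcal{D}_B(\sigma)|} = \tfrac{1}{2}\bigl(B_n(x)+(1-x)^n\bigr)$, using $B_n(x;-1)=(1-x)^n$ from Corollary~\ref{cor:Bnminusone}, is correct. However, the Hermite--Kakeya--Obreschkoff step you plan around is not a technical hurdle to be cleared; it genuinely fails. A common interlacer of two degree-$n$ real-rooted polynomials is a degree-$(n-1)$ polynomial whose roots lie weakly between consecutive roots of each. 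Since $(1-x)^n$ has all its roots at $x=1$, any interlacer of $(1-x)^n$ must have all roots equal to $1$, and no such polynomial can interlace $B_n(x)$, whose roots are all strictly negative. Thus no common interlacer exists, and HKO in fact guarantees that \emph{some} nonnegative combination of $B_n(x)$ and $(1-x)^n$ fails to be real-rooted: already $B_2(x)+2(1-x)^2 = 3+2x+3x^2$ has negative discriminant. HKO characterizes when \emph{all} nonnegative combinations are real-rooted, so it cannot certify the single combination $\alpha=\beta=\tfrac12$.

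The same obstruction defeats your fallback. At $q=-1$, asking $A_{n-1}(x)$, with strictly negative roots, to interlace $B_n(x;-1)=(1-x)^n$, with all roots at $1$, is simply false---not a boundary case requiring ``special care'' but a place where interlacing provably breaks down. No continuity or Rolle-type deformation through $q\in[-1,1]$ can rescue a pairwise interlacing statement that fails at one endpoint. Any correct proof must therefore use something finer than a common interlacer of the two extreme specializations $B_n(x;\mathbold{1})$ and $B_n(x;-\mathbold{1})$; Br\"and\'en's argument exploits the full multi-parameter polynomial $B_n(x;\mathbold{q})$ from \eqref{eq:typeBqs} rather than taking the global average in a single step.
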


It would be interesting to find a multivariate analog of this result.  Again, the straightforward application
of our method with the descent top set definition in \eqref{eq:typeDnaive} results in a non-stable polynomial already for $n=3$.

Table~\ref{table:D3} gives the type $B$ and type $D$ descents for permutations in $D_3$.  As demonstrated above, our choice of $X = x_{|\sigma_0|}$ and $Y = y_{|\sigma_0|}$ does not result in stable polynomials, though it is possible that some other choice will.
\begin{center}
\begin{table}[h]
\begin{tabular}{|c|c|c|c|c|c|c|c|c|c|c|c|}
\hline
$\sigma$ & $x^{\mathcal{DT}_B}$ & $x^{\mathcal{DT}_D}$ & $\sigma$ & $x^{\mathcal{DT}_B}$ & $x^{\mathcal{DT}_D}$ &$\sigma$ & $x^{\mathcal{DT}_B}$ & $x^{\mathcal{DT}_D}$ & $\sigma$ & $x^{\mathcal{DT}_B}$ & $x^{\mathcal{DT}_D}$\\ 

$123$ & $Yy_2y_3$ & $Yy_2y_3$ & $\bar{1}\bar{2}3$ & $Xx_2y_3$ & $Xy_2y_3$ &
$\bar{1}2\bar{3}$ & $Xy_2x_3$ & $Yy_2x_3$ & $1\bar{2}\bar{3}$ & $Yy_2x_3$ & $Xy_2x_3$ \\

$132$ & $Yy_3x_3$ & $Yy_3x_3$ & $\bar{1}\bar{3}2$ & $Xx_3y_3$ & $Xx_3y_3$ &
$\bar{1}3\bar{2}$ & $Xy_3x_3$ & $Yy_3x_3$ & $1\bar{3}\bar{2}$ & $Yx_3y_3$ & $Xx_3y_3$ \\

$213$ & $Yx_2y_3$ & $Yx_2y_3$ & $\bar{2}\bar{1}3$ & $Xy_2y_3$ & $Xy_2y_3$ &
$\bar{2}1\bar{3}$ & $Xy_2x_3$ & $Xy_2x_3$ & $2\bar{1}\bar{3}$ & $Yx_2x_3$ & $Yx_2x_3$ \\

$231$ & $Yy_3x_3$ & $Yy_3x_3$ & $\bar{2}\bar{3}1$ & $Xx_3y_3$ & $Xx_3y_3$ &
$\bar{2}3\bar{1}$ & $Xy_3x_3$ & $Yy_3x_3$ & $2\bar{3}\bar{1}$ & $Yx_3y_3$ & $Xx_3y_3$ \\

$312$ & $Yx_3y_2$ & $Yx_3y_2$ & $\bar{3}\bar{1}2$ & $Xy_3y_2$ & $Xy_3y_2$ &
$\bar{3}1\bar{2}$ & $Xy_3x_2$ & $Yy_3x_2$ & $3\bar{1}\bar{2}$ & $Yx_3x_2$ & $Xx_3x_2$ \\

$321$ & $Yx_3x_2$ & $Yx_3x_2$ & $\bar{3}\bar{2}1$ & $Xy_3y_2$ & $Xy_3y_2$ &
$\bar{3}2\bar{1}$ & $Xy_3x_2$ & $Xy_3x_2$ & $3\bar{2}\bar{1}$ & $Yx_3y_2$ & $Yx_3y_2$ \\
\hline
\end{tabular}
\newline\vspace{1mm} 
\caption{Type $B$ and type $D$ descents over the permutations in $D_3$.}
\label{table:D3}
\end{table}
\end{center}

Another result related to resolving Conjecture~\ref{conj:Brenti} came from~\cite{Cho03}, in which 
Chow found a recurrence for $D_n(x)$.  Sadly, the resulting formula is much more complicated 
than its counterparts in types $A$ and $B$, and does not seem amenable to a multivariate 
generalization.  We reproduce the recurrence here, fixing a typo from the original paper (the term 
in the box was mistakenly written with a minus sign).

\begin{thm}[cf.~Theorem 5.3 in~\cite{Cho03}] Let $D_{-1}(x) = D_0(x) = D_1(x) = 1$ and for $n\ge 0$

\begin{align*}
 D_{n+2}(x) &= \left(n (1 + 5 x) + 4 x\right) D_{n + 1}(x) \\
  &\quad +4 x (1 - x) D'_{n + 1} (x) \\
  &\quad +\left((1 - x)^2 - n (1 + 3 x)^2 - 4 n (n - 1) x (1 + 2 x)\right) D_n(x)\\
  &\quad -\left(4 n x (1 - x) (1 + 3 x) + 4 x (1 - x)^2\right) D'_n(x)\\
  &\quad -4 x^2 (1 - x)^2 D''_n (x)\\
  &\quad +\left(2 n (n - 1) x (3 + 2 x + 3 x^2) \text{\fbox{$+\, 4 n (n - 1) (n - 2) x^2 (1 + x)$}}\,\right) D_{n - 1} (x)\\
  &\quad +\left(2 n x (1 - x)^2 (3 + x) +  8 n (n - 1) x^2 (1 - x) (1 + x)\right)  D'_{n - 1}(x)\\
  &\quad +4 n x^2 (1 - x)^2 (1 + x) D''_{n - 1}(x).
\end{align*}   

\end{thm}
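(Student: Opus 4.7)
The plan is to derive the recurrence by refining $D_n(x)$ with auxiliary statistics that permit a clean one-step recursion, and then algebraically eliminating those auxiliary statistics to recover the second-order differential recurrence for $D_n(x)$ alone. The main source of difficulty is the type $D$ descent convention $\sigma_0 \buildrel\mathrm{def}\over= -\sigma_2$, which couples the position-$1$ descent to the sign of $\sigma_2$ in a non-local way, so a direct insertion argument on $D_n$ cannot give a closed recursion.

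First I would partition $D_n$ according to the sign pattern $(\epsilon_1, \epsilon_2)$ of $(\sigma_1, \sigma_2)$ and define refined polynomials
\[
D_n^{\epsilon_1, \epsilon_2}(x) = \sum_{\substack{\sigma \in D_n \\ \mathrm{sgn}(\sigma_i) = \epsilon_i}} x^{|\mathcal{D}_D(\sigma)|}.
\]
The global symmetry $\sigma \mapsto -\sigma$ on $D_n$ acts on these four sectors in a controlled way, reducing the number of independent polynomials. I would then establish a one-step recursion expressing each $D_{n+1}^{\epsilon_1, \epsilon_2}$ in terms of the $D_n^{\epsilon'_1, \epsilon'_2}$ and their $x$-derivatives by inserting $\pm(n+1)$ into a rank-$n$ type $D$ permutation. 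Insertions at interior positions $3,\dots,n+1$ behave as in type $B$ and contribute an $x$-polynomial plus a derivative operator, while insertions at positions $1$ or $2$ are the delicate cases because they change $\epsilon_1$ or $\epsilon_2$ and may create or destroy the position-$1$ descent via the $-\sigma_2$ convention; parity of the sign count is maintained by pairing some insertions with sign flips elsewhere in the window.

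Next I would iterate the one-step recursion to express $D_{n+2}^{\epsilon_1, \epsilon_2}$ in terms of $D_n^{\epsilon'_1, \epsilon'_2}$ (and further in terms of $D_{n-1}^{\epsilon'_1, \epsilon'_2}$), obtaining a coupled linear system among the sign-refined polynomials with coefficients that are polynomials in $x$ and involve up to second-order $x$-derivatives. Summing over $(\epsilon_1, \epsilon_2)$ recovers $D_n(x) = \sum_{\epsilon_1, \epsilon_2} D_n^{\epsilon_1, \epsilon_2}(x)$, and eliminating the remaining independent refined polynomials by linear algebra produces a closed identity in $D_{n+2}, D_{n+1}, D_n, D_{n-1}$ and their derivatives. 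The four-term lag and the appearance of $D''$ in the stated recurrence are exactly what one expects from eliminating several auxiliary polynomials out of a one-step system with derivative operators.

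The hard part, and the reason Chow's formula is so much more intricate than its type $A$ and $B$ analogues, is the bookkeeping in the one-step recursion: each of the $2(n+1)$ insertion operations must be tracked across all four $(\epsilon_1, \epsilon_2)$ sectors of source and target, with special subcases when $\pm(n+1)$ is inserted into position $1$ or $2$ and interacts with the $\sigma_0 = -\sigma_2$ convention. The crux is to show that the system closes on exactly these four refined polynomials, so that no further refinement (for instance, tracking $|\sigma_2|$ or whether $|\sigma_1| > |\sigma_2|$) is needed. Verifying the exact coefficients after elimination is a routine but lengthy algebraic check, and is presumably where the sign error on the boxed term in the original source was introduced.
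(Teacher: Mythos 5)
This theorem is not proved in the paper at all: it is quoted from Chow~\cite{Cho03}, and the paper's only contribution here is a sign correction on the boxed term. Chow's own derivation (which the paper does not reproduce) proceeds through the exponential generating function for $D_n(x)$, which he obtains from the known generating functions of $B_n(x)$ and $A_n(x)$ via an identity of Reiner/Stembridge type, and then translates the resulting differential relation into the coefficient recurrence. So there is no ``paper proof'' for your proposal to match; the fair comparison is with Chow, and your combinatorial refinement-and-elimination strategy is a genuinely different route.

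As a proof plan, however, your proposal has two concrete gaps. First, the four sign sectors $(\epsilon_1,\epsilon_2)$ do not give a closed system. The type-$D$ descent at position $1$ is the condition $-\sigma_2 > \sigma_1$, i.e.\ $\sigma_1 + \sigma_2 < 0$. In the mixed-sign sectors $(+,-)$ and $(-,+)$ this is decided not by the signs but by whether $|\sigma_1|>|\sigma_2|$. When you insert $\pm(n+1)$ at a position $i\geq 3$, the new position-$1$ descent status is inherited from the old one, but $D_n^{\epsilon_1,\epsilon_2}(x)$ records only the total descent count, not the position-$1$ status in the mixed sectors, so the contribution of these insertions cannot be determined from your four polynomials. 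You flag this as ``the crux'' and assert no further refinement is needed, but you give no argument, and the assertion is in fact false for your four-sector refinement: you would have to split the mixed sectors further, e.g.\ by whether position $1$ is a descent. Second, the remark that parity is ``maintained by pairing some insertions with sign flips elsewhere in the window'' does not describe a well-defined insertion operation, since an extra sign flip changes the descent set unpredictably. The standard clean way to handle parity is to track both $D_n(x)$ and its complement $D_n^c(x) := B_n(x)-D_n(x)$ and write a coupled system; this is another refinement your proposal does not acknowledge. Until both issues are repaired and the elimination is actually carried out, the argument does not yet establish the stated recurrence.
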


Encouraged by the simplicity of our methods for types $A$ and $B$, we propose a new line of attack.  
Stembridge showed that the Eulerian polynomials of types $A$, $B$, and $D$ are related via the following identity. 
This result was discovered independently by Brenti (see Corollary~4.8 and Theorem~4.7 in \cite{Bre94} for a ``$q$-analog''), 
who also points out that if follows from Theorem~4.2 of \cite{Rei95}.

\begin{thm}[Lemma 9.1 of \cite{Ste94}]
  For $n \ge 2$, 
  \begin{equation}
    \label{eq:Stembridge}
    D_n(x) = B_n(x) - n2^{n-1}xA_{n-2}(x),
  \end{equation}
  where $A_{n}(x)$, defined in \eqref{eq:An}, is the descent generating function in 
  $A_n \cong \mathrm{Sym}(n+1)$. 
\end{thm}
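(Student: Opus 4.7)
My plan is to prove the identity by a combinatorial argument that first pairs up elements of $B_n$ via a sign-flip involution and then invokes a rank-standardization identity. The key tool is the fixed-point-free involution $\iota\colon B_n \to B_n$ defined by $\iota(\sigma_1,\sigma_2,\ldots,\sigma_n) = (-\sigma_1,\sigma_2,\ldots,\sigma_n)$. Each $\iota$-orbit $\{\sigma, \iota\sigma\}$ contains exactly one element of $D_n$ and one of $B_n \setminus D_n$, so it contributes twice to $B_n(x)$ but only once to $D_n(x)$.

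The first step is the observation that both $\mathcal{D}_B$ and $\mathcal{D}_D$ can differ between $\sigma$ and $\iota\sigma$ only at positions $1$ and $2$, since only $\sigma_1$ changes under $\iota$. A case analysis based on the sign of $\sigma_2$ and on the comparison of $|\sigma_1|$ with $|\sigma_2|$ (four cases) then shows that every $\iota$-orbit contributes exactly $x^{r+1}$ to the difference $B_n(x) - D_n(x)$, where $r = |\{i \ge 3 : \sigma_{i-1} > \sigma_i\}|$ depends only on $(\sigma_2, \ldots, \sigma_n)$, which is preserved across the orbit.

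Next, I would parameterize $\iota$-orbits by pairs $(|\sigma_1|, (\sigma_2,\ldots,\sigma_n))$ with $|\sigma_1| \in [n]$ and $(\sigma_2,\ldots,\sigma_n)$ a signed permutation of $[n]\setminus\{|\sigma_1|\}$. Standardizing the tail (replace each entry by its rank among $\sigma_2, \ldots, \sigma_n$) produces an element $\rho \in B_{n-1}$ with the same ``tail descent'' count $\mathrm{des}'(\rho) := |\{i \in [n-2] : \rho_i > \rho_{i+1}\}|$. The rank-pattern map $B_{n-1} \to \mathrm{Sym}(n-1)$ is $2^{n-1}$-to-$1$ and preserves adjacent descents, so $\sum_{\rho \in B_{n-1}} x^{\mathrm{des}'(\rho)} = 2^{n-1} A_{n-2}(x)$.

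Combining the two steps, $B_n(x) - D_n(x) = x \sum_{|\sigma_1| \in [n]} \sum_{\rho \in B_{n-1}} x^{\mathrm{des}'(\rho)} = n \cdot 2^{n-1} \cdot x \cdot A_{n-2}(x)$, as desired. The main obstacle is the case analysis in the first step: each of the four cases shifts $\mathcal{D}_B$ and $\mathcal{D}_D$ at positions $1$ and $2$ in a different way (including a parity-dependent choice of which orbit element sits in $D_n$), and it takes some bookkeeping to verify that in all four cases the pair of $B_n$-contributions, minus the single $D_n$-contribution, collapses to the same term $x^{r+1}$.
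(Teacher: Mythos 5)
Your argument is essentially correct, but be aware that the paper itself gives no proof of this identity to compare against: it is imported as Lemma 9.1 of \cite{Ste94}, with the remark that Brenti found it independently (a ``$q$-analog'' via Theorem 4.7 and Corollary 4.8 of \cite{Bre94}, proved by generating-function manipulations) and that it also follows from Theorem 4.2 of \cite{Rei95}. So your sign-flip involution gives a self-contained bijective proof, different in flavor from the algebraic routes in the cited sources. I checked the step you flag as the main obstacle, and it does close: write $a=|\sigma_1|>0$, $b=\sigma_2$, and let $p=1$ if $a>b$ (else $0$) and $q=1$ if $a+b<0$ (else $0$). Using the paper's conventions ($\sigma_0=0$ for type $B$, $\sigma_0=-\sigma_2$ for type $D$), the orbit $\{\sigma,\iota\sigma\}$ contributes $x^{r}\bigl(x^{p}+x^{1+q}\bigr)$ to $B_n(x)$, while the type-$D$ descent indicators at positions $1$ and $2$ are (``$a+b<0$'', ``$a>b$'') for $\sigma$ and (``$a>b$'', ``$a+b<0$'') for $\iota\sigma$; they merely swap, so the contribution to $D_n(x)$ is $x^{r+p+q}$ no matter which orbit element has an even number of negative entries, and $x^{p}+x^{1+q}-x^{p+q}=x$ in every realizable case (the case $p=0,q=1$ cannot occur since $a>0$). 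One small repair to your second step: replacing each tail entry by its rank among $\sigma_2,\dots,\sigma_n$ literally produces an element of $\mathrm{Sym}(n-1)$, not of $B_{n-1}$; what you want is either to rank the absolute values while keeping the signs, which gives a descent-preserving bijection from tails with fixed $|\sigma_1|$ onto $B_{n-1}$, followed by the $2^{n-1}$-to-$1$ rank-pattern map onto $\mathrm{Sym}(n-1)$, or to rank the signed values directly and note that this map from tails to $\mathrm{Sym}(n-1)$ is itself $2^{n-1}$-to-$1$. Either way $\sum x^{r}=2^{n-1}A_{n-2}(x)$ for each of the $n$ choices of $|\sigma_1|$, yielding $B_n(x)-D_n(x)=n2^{n-1}xA_{n-2}(x)$; the check at $n=2$, where $B_2(x)-D_2(x)=(1+6x+x^2)-(1+x)^2=4x$, confirms the bookkeeping.
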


By replacing the univariate polynomials $B_n(x)$ and $A_n(x)$ in \eqref{eq:Stembridge} by their stable 
multivariate generalizations given in \eqref{eq:typeApoly} and \eqref{eq:typeBpolyhom}, respectively, we 
obtain the following multivariate refinement of $D_n(x)$:
\begin{equation}
 \label{eq:typeD}
  D_n(\mathbold{x}, \mathbold{y}) = B_n(\mathbold{x}, \mathbold{y}; 1) - n 2^{n-1} x_n y_n A_{n-2}(\mathbold{x}, \mathbold{y}).
\end{equation}
For example, when $n=2$, we obtain
\[
  D_2(x_1, x_2, y_1, y_2) = B_2(x_1, x_2, y_1, y_2;1) - 4x_2y_2A_0(x_1,y_1) = (x_1+y_1)(x_2+y_2),
\]
and when $n=3$, the polynomial is
\begin{align*}
  D_3(\mathbold{x},\mathbold{y})  & =  x_1 x_2 x_3+x_2 x_3 y_1+x_1 x_3 y_2+x_3 y_1 y_2+x_1 x_2 y_3+x_2 y_1 y_3+x_1 y_2 y_3+y_1 y_2 y_3 \\
  &\quad+ 4(x_2 x_3 y_2+x_1 x_3 y_3+ x_2 y_2 y_3+ x_3 y_1 y_3).
\end{align*}

These polynomials fail to be stable, even for $n=3$. This follows from Theorem~\ref{thm:checkstable}, 
since specializing the $y$ variables to 1 gives 
\[
  D_3 (\mathbold{x}) = 1 + x_1 + x_1 x_2 + x_1 x_2 x_3+5 (x_2 +x_3 + x_1 x_3 + x_2 x_3)
\] 
and
\[
 \frac{\partial D_3(\mathbold{x})}{\partial x_1} \cdot \frac{\partial D_3(\mathbold{x})}{\partial x_3}-
 \frac{\partial^2 D_3(\mathbold{x})}{\partial x_1 \partial x_3} \cdot D_3 (\mathbold{x}) = -16x_2,
\]
which fails to be nonnegative for $x_2 >0$.  It is possible that a different refinement would 
result in a family of stable polynomials, from which real-rootedness would follow.

It does seem to be the case, however, that the coefficients remain non-negative.

\begin{conj}
  $D_n(\mathbold{x}, \mathbold{y}) \in \mathbb{N}[\mathbold{x},\mathbold{y}]$ for all $n\ge 2$.
\end{conj}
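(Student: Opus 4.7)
The plan is to reduce the claim to a monomial-wise coefficient inequality. Using Lemma~\ref{lem:FulmanPetersenA}, I would rewrite
\[
D_n(\mathbold{x},\mathbold{y}) = B_n(\mathbold{x},\mathbold{y};1) - 2^{n-1}\widetilde{A}_{n-1}(\mathbold{x},\mathbold{y}).
\]
Since every monomial of $\widetilde{A}_{n-1}$ carries a factor of $x_n y_n$, the coefficient of $\mathbold{x}^S\mathbold{y}^T$ in $D_n$ already equals the nonnegative coefficient in $B_n(\mathbold{x},\mathbold{y};1)$ whenever $n \notin S \cap T$. For $n \in S \cap T$, setting $S' := S\setminus\{n\}$ and $T' := T \setminus \{n\}$, the claim reduces to the combinatorial inequality
\[
\left|\{\sigma \in B_n : \mathcal{DT}_B(\sigma) = S,\ \mathcal{AT}_B(\sigma) = T\}\right| \geq n\,2^{n-1}\cdot\left|\{\tau \in A_{n-2} : \mathcal{DT}_A(\tau) = S',\ \mathcal{AT}_A(\tau) = T'\}\right|.
\]

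I would prove this inequality by constructing an explicit injection $\mathcal{A}(S',T') \times [n] \times \{\pm 1\}^{n-1} \hookrightarrow \mathcal{B}(S,T)$, writing $\mathcal{A}, \mathcal{B}$ for the sets above. Guided by the case $n=3$ worked out by hand, the construction has two stages. First, each $\tau \in \mathcal{A}(S',T')$ produces $n$ distinct \emph{underlying} permutations $\pi^{(1)},\ldots,\pi^{(n)} \in \mathrm{Sym}(n)$ obtained from $\tau$ by inserting $n$ together with certain controlled rearrangements generalizing the cyclic-rotation argument behind Lemma~\ref{lem:FulmanPetersenA}; each $\pi^{(k)}$ satisfies $\pi_n^{(k)} \neq n$ (so $n$ appears as a top at two consecutive positions of $\sigma$, ensuring $n \in \mathcal{DT}_B \cap \mathcal{AT}_B$) and $|\pi_1^{(k)}| \in S' \cup T' \cup \{n\}$ (so $\sigma_1$'s absolute value is permitted in $S \cup T$). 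Second, for each $\pi^{(k)}$ I would produce $2^{n-1}$ signed permutations $\sigma$ with $|\sigma| = \pi^{(k)}$ by fixing the sign of one distinguished \emph{critical} entry---chosen to place its absolute value into the correct component of $S$ or $T$---and letting the signs of the remaining $n-1$ entries vary freely.

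The main obstacle is verifying that all $n\cdot 2^{n-1}$ of these signed permutations really lie in $\mathcal{B}(S,T)$. A sign flip at a non-critical entry changes the descent/ascent type at its two adjacent positions, so top-set preservation requires that at each disturbed position either the local comparison happens to be unchanged by the flip, or the top value there lies in $S \cap T$ so that moving it between $\mathcal{DT}_B$ and $\mathcal{AT}_B$ is invisible to the sets. In the cases I checked, the disturbed top is always either $n$ itself (which lies in $S \cap T$ by assumption) or a value whose neighbors dominate it, making the comparison insensitive to the flip; a uniform proof will likely require a case analysis stratified by the cyclic structure of each $\pi^{(k)}$ and its local pattern around the critical entry, together with a separate verification---recoverable from $\sigma$ via $|\sigma|$ and the position of $\pm n$---that the images for distinct $\tau$ remain disjoint.

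If this direct combinatorial route proves intractable, a fallback is induction via the recursion
\[
D_{n+1} = (x_{n+1}+y_{n+1})\,D_n + 2\,x_{n+1}y_{n+1}\,\bpartial D_n + C_n,
\]
where $C_n = n\,2^{n-1}(x_{n+1}+y_{n+1})\,x_n y_n A_{n-2} - 2^n\,x_{n+1}y_{n+1}\,A_{n-1}$, derived from the recursions for $B_n(\mathbold{x},\mathbold{y};1)$ and $A_{n-1}(\mathbold{x},\mathbold{y})$. The first two terms on the right have nonnegative coefficients by induction, but $C_n$ has coefficients of mixed sign, so this route would require absorbing the negative contributions of $C_n$ into $2\,x_{n+1}y_{n+1}\,\bpartial D_n$ by exploiting finer structural information about the coefficients of $D_n$ beyond mere nonnegativity.
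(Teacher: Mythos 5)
The paper does not actually prove this statement: it is stated only as a conjecture, supported by computer verification for $n \leq 11$. There is therefore no paper proof to compare against, and a complete argument here would settle a problem the paper leaves open.

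Your reduction is sound. Rewriting the subtracted term via Lemma~\ref{lem:FulmanPetersenA} as $2^{n-1}\widetilde{A}_{n-1}(\mathbold{x},\mathbold{y}) = n2^{n-1}x_ny_nA_{n-2}(\mathbold{x},\mathbold{y})$ is a correct identity, and the coefficient-wise inequality you extract for monomials with $n \in S \cap T$ is exactly what the conjecture asserts, via the combinatorial interpretations \eqref{eq:typeBpolyhom} (at $q=1$) and \eqref{eq:typeApoly}. The fallback recurrence is also algebraically correct: after substituting the $A_{n-1}$ recursion, the cross term simplifies to the $C_n$ you state. However, the crux of the proposal---well-definedness and injectivity of the map $\mathcal{A}(S',T') \times [n] \times \{\pm 1\}^{n-1} \hookrightarrow \mathcal{B}(S,T)$---is precisely where you stop: you acknowledge you have verified it only in small cases, that a uniform argument ``will likely require a case analysis,'' and that disjointness of images needs a separate check. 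The hard step is the one left unproven: a sign flip at one position disturbs the comparison at two adjacent positions, and there is no a priori guarantee that the disturbed tops always equal $n$, lie in $S \cap T$, or are dominated by their neighbors; that requires structural control over the intermediate permutations $\pi^{(k)}$ that the cyclic-rotation heuristic from Lemma~\ref{lem:FulmanPetersenA} does not by itself supply. Likewise, the mixed signs in $C_n$ mean the inductive route cannot close without finer information about the coefficients of $D_n$ than nonnegativity alone. In short, this is a promising proof strategy but not a proof; the conjecture remains open in the paper.
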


We have verified this conjecture by computer for $n \leq 11$.  If true, then \eqref{eq:typeD} suggests a 
refinement of the descent statistic of type $D$ which in turn could lead to a new recursion for the type $D$ 
Eulerian polynomials.

\subsection{Affine Eulerian Polynomials of Types $B$ and $D$}
\label{sec:typetildeBD}

Dilks, Petersen, and Stembridge noted that the only missing cases from Conjecture~\ref{conj:dps} are types $B$ and
$D$.  They considered multivariate refinements of these polynomials, but they indexed
the variables by descents and not by descent tops. It was noted in~\cite{HV11} that polynomials 
indexed by the descents fail to be stable for type $A$.

Two new identities relating ordinary and affine Eulerian polynomials also appeared
in \cite{DPS09}. 
\begin{prop}[Proposition 6.1 in~\cite{DPS09}]
  For $n \ge 2$,
  \[
    2\widetilde{C}_n(x) = \widetilde{B}_n(x) + 2nxC_{n-1}(x).
  \]
\end{prop}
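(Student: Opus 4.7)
The plan is to reduce the identity to a single combinatorial statement over $B_n = C_n$ and then verify it bijectively.  Since the ordinary Coxeter descents of types $B$ and $C$ coincide, I would first write
\[
  \widetilde{B}_n(x) = \sum_{\sigma \in B_n} x^{|\mathcal{D}_B(\sigma)| + \delta_B(\sigma)}, \qquad \widetilde{C}_n(x) = \sum_{\sigma \in B_n} x^{|\mathcal{D}_B(\sigma)| + \delta_C(\sigma)},
\]
where $\delta_B(\sigma), \delta_C(\sigma) \in \{0,1\}$ indicate whether $0$ belongs to the corresponding affine descent set.  The type $C$ condition is already available: $\delta_C(\sigma) = [\sigma_n > 0]$.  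An analogous computation based on $\ell_W(\sigma s_0) > \ell_W(\sigma) \iff \sigma(\alpha_0) \in \Phi^+$, applied to the highest root $e_1 + e_2$ of $B_n$, yields an explicit Boolean formula for $\delta_B(\sigma)$ depending only on the first two window entries $\sigma_1, \sigma_2$.

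With these in hand, the proposition reduces to the polynomial identity
\[
  \sum_{\sigma \in B_n} x^{|\mathcal{D}_B(\sigma)|}\bigl(2 x^{\delta_C(\sigma)} - x^{\delta_B(\sigma)}\bigr) = 2nx \cdot B_{n-1}(x).
\]
I would attack this by a weight-preserving bijection based on removing the entry of largest absolute value: given $\sigma \in B_n$, locate $\pm n$ at position $k \in [n]$ with sign $\epsilon \in \{+, -\}$, delete it, and shift the remaining window to obtain $\tau \in B_{n-1}$.  The encoded data $(\epsilon, k, \tau) \in \{\pm\} \times [n] \times B_{n-1}$ accounts for the multiplicity $2n$, and the single factor of $x$ on the right corresponds to a descent of $\sigma$ incurred by the extremal entry $\pm n$ that is lost under removal.

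The main obstacle will be reconciling the four possible values of the weight $2x^{\delta_C} - x^{\delta_B} \in \{x,\, 1,\, 2x-1,\, 2-x\}$ with the descent changes induced by the bijection.  The three "boundary" positions $k \in \{1, 2, n\}$ are precisely the places where removing $\pm n$ can flip $\delta_B$ or $\delta_C$: positions $k \le 2$ may alter $\delta_B$ (since they sit inside the first two entries), position $k = n$ may alter $\delta_C$, and only in the generic interior range $3 \le k \le n-1$ are both indicators untouched by the deletion.  Showing that the mismatch weights $(2x-1)$ and $(2-x)$ combine correctly across the boundary cases will require careful case analysis, perhaps aided by a sign-reversing involution pairing signed permutations whose extremal entry lies in $\{1, 2\}$ with those whose extremal entry lies at position $n$.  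As a backup route I would exploit the multivariate identity $\widetilde{C}_n(x) = 2^n x\, A_{n-1}(x)$ established in the preceding section to reduce the task to a direct closed-form computation of $\widetilde{B}_n(x)$, from which the identity would follow by straightforward algebra.
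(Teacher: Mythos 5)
The paper does not prove this proposition; it is stated and cited verbatim from~\cite{DPS09} (their Proposition~6.1) and used only as a source for a conjectural multivariate refinement $\widetilde{B}_n(\mathbold{x},\mathbold{y})$ via~\eqref{eq:affineCpolynomial}. So there is no internal proof to compare against, and your proposal must be judged on its own terms.

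The overall strategy is sound in outline: since $B_n$ and $C_n$ are the same Coxeter group and $\mathcal{D}_B = \mathcal{D}_C$, both affine Eulerian polynomials are sums over $B_n$ of $x^{|\mathcal{D}_B(\sigma)|}$ times $x$ raised to a $\{0,1\}$-valued indicator depending on finitely many window entries, so the identity is equivalent to the displayed combinatorial statement, and a deletion argument on $\pm n$ is a plausible route. However, the proposal has a genuine gap exactly where the content lies. You correctly observe that the weights $2x^{\delta_C(\sigma)} - x^{\delta_B(\sigma)} \in \{1,\ x,\ 2x-1,\ 2-x\}$ must recombine, after accounting for descent changes under deletion, to a clean factor of $2nx$; but you do not carry out that reconciliation, explicitly deferring the case analysis and gesturing at an unspecified sign-reversing involution. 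The negative coefficients in $2x-1$ and $2-x$ are the entire difficulty, and nothing in the proposal shows they cancel. Moreover, the claim that only positions $k \in \{1,2,n\}$ can flip the two indicators should be re-examined: the paper's type $C$ condition is $\sigma_n > 0$, which in the window convention being used indicates the highest root is located at the right end, so the type $B$ analogue most likely depends on $\sigma_{n-1},\sigma_n$ rather than $\sigma_1,\sigma_2$; that collapses the boundary positions to $k \in \{n-1,n\}$, makes the $\delta_B$- and $\delta_C$-sensitive positions overlap, and changes the structure of the case analysis you sketch. Finally, the fallback route is not actionable as stated: the paper does give $\widetilde{C}_n(x) = 2^n x A_{n-1}(x)$ (via Lemma~\ref{lem:FulmanPetersenA} and its type $C$ analogue), but there is no comparably simple closed form for $\widetilde{B}_n(x)$ to compute against --- indeed, even the real-rootedness of $\widetilde{B}_n(x)$ remains open (Conjecture~\ref{conj:dps}). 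As written, this is a plan with the decisive combinatorial step missing, not a proof.
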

This identity could be used to obtain a multivariate refinement for the unsettled type ${B}$ case 
in a similar way as suggested for type $D$ above. Let 
\begin{equation}
\label{eq:affineCpolynomial}
  \widetilde{B}_n(\mathbold{x},\mathbold{y}) = 
  2\widetilde{C}_n(\mathbold{x},\mathbold{y}) - 2nx_ny_nB_{n-1}(\mathbold{x},\mathbold{y};1)\,.
\end{equation}

In contrast to the type $D$ case, these polynomials turn out to be stable for $n\le 4$ 
(we have verified this by computer calculations). Therefore, we suggest to investigate 
whether this is a stable multivariate of the affine Eulerian polynomial of type $B$.
\begin{conj}
  $\widetilde{B}_n(\mathbold{x}, \mathbold{y}) \in \mathfrak{S}_{\mathbb{R}}[\mathbold{x},\mathbold{y}]$, 
  for $n\ge 2$.
\end{conj}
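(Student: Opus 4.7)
My plan begins with a structural simplification of the conjecture. Since $\widetilde{C}_n = 2x_ny_n\bpartial\widetilde{C}_{n-1}$, equation~\eqref{eq:affineCpolynomial} rewrites as
\[
\widetilde{B}_n(\mathbold{x},\mathbold{y}) \;=\; 2x_ny_n\,P_n(\mathbold{x},\mathbold{y}),
\qquad
P_n \;:=\; 2\bpartial\widetilde{C}_{n-1} - n\,B_{n-1}(\mathbold{x},\mathbold{y};1),
\]
where $P_n$ involves only the $2(n-1)$ variables $x_1,y_1,\ldots,x_{n-1},y_{n-1}$. Since multiplication by $x_n$ (and $y_n$) preserves stability, the conjecture reduces to showing that $P_n$ itself is stable. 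Explicit computation gives $P_2 = 2(x_1+y_1)$ and $P_3 = 5(x_2+y_2)(x_1+y_1) + 4x_2y_2$, and more generally $P_n$ appears to be multiaffine with nonnegative coefficients. This strongly suggests that the subtraction in~\eqref{eq:affineCpolynomial} is really a cancellation of overcounted terms and that $P_n$ itself may admit a direct positive combinatorial interpretation---which, if true, would be of independent interest.

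The next step is to find a recurrence for $P_n$. Substituting the known recurrences for $\widetilde{C}_n$ and $B_n(\mathbold{x},\mathbold{y};1)$ into the definition of $P_{n+1}$ leads, after expansion, to the coupled system
\begin{align*}
P_{n+1} &= \bigl[2(x_n+y_n) + 2x_ny_n\bpartial\bigr]\,P_n \;+\; \bigl[(n-1)(x_n+y_n) - 2x_ny_n\bpartial\bigr]\,B_{n-1}(\mathbold{x},\mathbold{y};1),\\
B_n(\mathbold{x},\mathbold{y};1) &= \bigl[(x_n+y_n) + 2x_ny_n\bpartial\bigr]\,B_{n-1}(\mathbold{x},\mathbold{y};1).
\end{align*}
The operators $2(x_n+y_n) + 2x_ny_n\bpartial$ and $(x_n+y_n) + 2x_ny_n\bpartial$ are of the family exploited throughout the paper and are stability-preserving by Theorem~\ref{thm:stable}. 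The operator $(n-1)(x_n+y_n) - 2x_ny_n\bpartial$, however, has the \emph{wrong} sign on $\bpartial$: when tested against $(\mathbold{x}+\mathbold{u})^{[n-1]}(\mathbold{y}+\mathbold{v})^{[n-1]}$, the resulting scalar factor is a sum of one term in $\mathcal{H}_-$ and one term in $\mathcal{H}_+$, so the hypothesis of Theorem~\ref{thm:stable} fails and no direct sign argument works.

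The main obstacle, then, is to prove that this particular linear combination of two stable polynomials---one transformed by a good operator, the other by a bad one---is nevertheless stable. I propose two complementary attacks. \emph{Combinatorial:} motivated by the observed nonnegativity of coefficients, look for a refined affine descent-top statistic on $B_n$ (perhaps pairing a flagged affine $C$-descent with a compensating change elsewhere) whose generating function equals $\widetilde{B}_n(\mathbold{x},\mathbold{y})$, and aim for a single-input recurrence $P_{n+1} = T(P_n)$ with $T$ stability-preserving, following the template of Sections~\ref{sec:typeB}--\ref{sec:typetildeC}. \emph{Analytic:} since $P_n$ appears multiaffine, apply Theorem~\ref{thm:checkstable} directly by expanding $(\partial_iP_n)(\partial_jP_n) - (\partial_i\partial_jP_n)\,P_n$ into pure contributions from $\widetilde{C}_{n-1}$, from $B_{n-1}(\mathbold{x},\mathbold{y};1)$, and cross-terms, then bound the cross-terms by Cauchy--Schwarz-type estimates using the analogous inequalities satisfied by each stable piece. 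In either approach, the hardest part will be to explain precisely why the constant $n$ in the definition of $P_n$ is \emph{forced}---any other scaling of $B_{n-1}$ destroys stability, which is both what the numerical evidence for $n\leq 4$ suggests and what makes a generic argument insufficient.
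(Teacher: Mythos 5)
The statement you have set out to prove is, in the paper, an \emph{open conjecture}: the authors verify it by computer only for $n\le 4$ and explicitly propose it as a direction to investigate; there is no proof in the paper to compare your attempt against. (The ``Note added in proof'' resolves the type~$D$ descent conjecture and part of Conjecture~\ref{conj:dps}, but not the stability of $\widetilde{B}_n(\mathbold{x},\mathbold{y})$.)

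Your analysis is correct as far as it goes and is a genuinely useful reformulation. The factorization $\widetilde{B}_n=2x_ny_n\,P_n$ with $P_n=2\bpartial\widetilde{C}_{n-1}-n\,B_{n-1}(\mathbold{x},\mathbold{y};1)$ follows immediately from the two recurrences, your computations of $P_2$ and $P_3$ check out, and the coupled recurrence
\begin{align*}
P_{n+1} &= \bigl[2(x_n+y_n)+2x_ny_n\bpartial\bigr]P_n
\;+\;\bigl[(n-1)(x_n+y_n)-2x_ny_n\bpartial\bigr]B_{n-1}(\mathbold{x},\mathbold{y};1)
\end{align*}
is correctly derived. You are also right that the second operator has the wrong sign pattern, so the symbol test of Theorem~\ref{thm:stable} cannot be applied termwise and the paper's inductive template of Sections~\ref{sec:typeB}--\ref{sec:typetildeC} stalls.

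However, identifying the obstacle is not overcoming it. Neither of your two proposed attacks is carried out: the combinatorial one requires a descent-top-type statistic on $B_n$ that you have not exhibited, and the analytic one requires Cauchy--Schwarz-type estimates relating the Rayleigh-type quantities of $\widetilde{C}_{n-1}$ and $B_{n-1}$ that you have not produced. Moreover, as you yourself observe, any such estimate must be tight enough to capture the exact constant $n$ multiplying $B_{n-1}$, and there is no a priori structural reason for a generic bound to close there. In short, your submission correctly reduces the conjecture, correctly locates where the standard machinery fails, and proposes sensible next steps, but it does not constitute a proof; the conjecture remains open in your write-up exactly as it does in the paper.
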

Also, this multivariate refinement seems to be monomial positive. 
\begin{conj}
  $\widetilde{B}_n(\mathbold{x}, \mathbold{y}) \in \mathbb{N}[\mathbold{x},\mathbold{y}]$, for $n\ge 2$.
\end{conj}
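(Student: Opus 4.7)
The cleanest route would be to produce a combinatorial model in which $\widetilde{B}_n(\mathbold{x},\mathbold{y})$ is manifestly a generating polynomial over $B_n$. Concretely, I would seek statistics $\widetilde{\mathcal{DT}}_B$ and $\widetilde{\mathcal{AT}}_B$ on $B_n$, refining $\widetilde{\mathcal{D}}_B$, such that
\[
\widetilde{B}_n(\mathbold{x},\mathbold{y}) = \sum_{\sigma\in B_n}\mathbold{x}^{\widetilde{\mathcal{DT}}_B(\sigma)}\mathbold{y}^{\widetilde{\mathcal{AT}}_B(\sigma)};
\]
positivity would then be automatic. These statistics would be built from $\mathcal{DT}_B(\sigma)$ and $\mathcal{AT}_B(\sigma)$, plus an ``affine'' contribution depending on the values and signs of $\sigma_{n-1}$ and $\sigma_n$ (the data that determines whether $s_0$ is a descent in type $B$), chosen so that setting $y_i=1$ and diagonalizing the $\mathbold{x}$ variables recovers the univariate $\widetilde{B}_n(x)$.

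To verify the proposed identity, I would re-read the definition
\[
\widetilde{B}_n(\mathbold{x},\mathbold{y}) = 2\widetilde{C}_n(\mathbold{x},\mathbold{y}) - 2n\,x_n y_n\,B_{n-1}(\mathbold{x},\mathbold{y};1)
\]
as an enumeration. The first summand can be interpreted as a sum over two copies of $C_n\cong B_n$ weighted by the type $C$ affine statistic; the subtracted term is naturally indexed by pairs consisting of a signed permutation $\tau \in B_{n-1}$ and one of $2n$ choices (an insertion slot together with a sign for the letter $n$), each contributing $x_n y_n\,\mathbold{x}^{\mathcal{DT}_B(\tau)}\mathbold{y}^{\mathcal{AT}_B(\tau)}$. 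I would then try to build a sign-reversing involution on the doubled set whose fixed points biject with $B_n$ (carrying the proposed $\widetilde{\mathcal{DT}}_B/\widetilde{\mathcal{AT}}_B$ monomials) and whose two-cycles match up the $2n|B_{n-1}|$ correction objects. A natural first candidate for the involution is sign-flipping of the letter $n$, which toggles the type $C$ affine descent indicator and should correlate predictably with the type $B$ one.

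The main obstacle, I expect, is that the underlying univariate identity $2\widetilde{C}_n(x) = \widetilde{B}_n(x) + 2nxC_{n-1}(x)$ was established in \cite{DPS09} via root-system arithmetic rather than a statistic-preserving bijection. Upgrading it to the multivariate level forces one to compare $\widetilde{\mathcal{D}}_B(\sigma)$ with $\widetilde{\mathcal{D}}_C(\sigma)$ on the \emph{same} $\sigma$, permutation by permutation, and in particular to explain combinatorially \emph{why} the coefficient $2$ appears in front of $\widetilde{C}_n$---which piece of data is being doubled. Without that understanding, the proposed involution is hard to guess correctly; the type $B$ affine generator $s_0 = s_1 s_0^C s_1$ (roughly) produces sign cancellations at the two extreme positions of $\sigma$, and the refined statistic must absorb those cancellations into a single $\mathbold{x}$- or $\mathbold{y}$-variable in exactly the right way.

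As a contingency, a recurrence of the form $\widetilde{B}_n(\mathbold{x},\mathbold{y}) = T_n\bigl(\widetilde{B}_{n-1}(\mathbold{x},\mathbold{y})\bigr)$ with $T_n$ a linear operator having nonnegative coefficients would imply positivity immediately and, via Theorem~\ref{thm:stable}, attack the stronger stability conjecture at the same time. The computer evidence quoted in the paper---positivity up to $n=11$ and stability up to $n=4$---makes such a recurrence a plausible target, and searching for one (perhaps by differencing the recurrences for $\widetilde{C}_n$ and $B_{n-1}$) is the most concrete next step.
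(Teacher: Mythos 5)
This statement is an open \emph{conjecture} in the paper: the authors offer no proof, only computer verification for small $n$, so there is no argument in the paper to compare your plan against. Your proposal correctly treats it as unresolved and lays out sensible attack routes rather than claiming a proof, and you have accurately located the central difficulty. The polynomial $\widetilde{B}_n(\mathbold{x},\mathbold{y})$ is \emph{defined} in the paper by the difference formula $2\widetilde{C}_n(\mathbold{x},\mathbold{y}) - 2nx_ny_nB_{n-1}(\mathbold{x},\mathbold{y};1)$ lifted from Dilks--Petersen--Stembridge's univariate identity; it is not defined as a generating polynomial over $B_n$ for any pair of top-set statistics. Hence monomial positivity is genuinely nontrivial, and your instinct that one must first \emph{find} statistics $\widetilde{\mathcal{DT}}_B,\widetilde{\mathcal{AT}}_B$ with $\widetilde{B}_n(\mathbold{x},\mathbold{y})=\sum_{\sigma\in B_n}\mathbold{x}^{\widetilde{\mathcal{DT}}_B(\sigma)}\mathbold{y}^{\widetilde{\mathcal{AT}}_B(\sigma)}$, rather than expand the difference, is exactly the right framing. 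Your ``contingency'' --- hunting for a stability-preserving recurrence $\widetilde{B}_n = T_n(\widetilde{B}_{n-1})$ with nonnegative operator coefficients --- is also aligned with the paper's method in the settled cases and would simultaneously attack the stronger stability conjecture stated alongside this one.

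That said, to be clear about what is and is not accomplished: neither the involution sketch nor the recurrence sketch is carried out, so the proposal does not close the gap, and the paper does not close it either. Two small cautionary notes for the involution route. First, the factor $2$ in front of $\widetilde{C}_n$ is \emph{not} an artifact of a bijective argument in the paper; it enters already at the univariate level through Proposition 6.1 of \cite{DPS09}, so your observation that one must ``explain combinatorially why the coefficient $2$ appears'' is on point and is likely the crux. Second, sign-flipping the last letter is a natural candidate involution but it changes $\mathcal{DT}_B$ and $\mathcal{AT}_B$ at position $n-1$ as well as the affine indicator, so the weight is not obviously preserved or sign-reversed without a more delicate choice of which variable ($x_j$ or $y_j$, for which $j$) records the affine step; this is precisely the ``absorb the cancellation into a single variable'' issue you flagged. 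In short, your analysis of the problem is accurate and compatible with the paper's framework, but the conjecture remains open.
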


Finally, for sake of completeness, we give another identity by Dilks, Petersen and Stembridge which
relates the following polynomials.
\begin{prop}[Proposition 6.2 in \cite{DPS09}] For $n \ge 3$, 
  \begin{equation}
    \widetilde{B}_n(x) = \widetilde{D}_n(x) + 2nxD_{n-1}(x).
  \end{equation}
\end{prop}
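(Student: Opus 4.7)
The plan is to prove the identity combinatorially by partitioning $B_n = D_n \sqcup (B_n \setminus D_n)$ and tracking the affine descent statistic on each piece. A size count is encouraging: since $|B_n \setminus D_n| = 2^{n-1} n! = 2n \cdot |D_{n-1}|$, the cardinality on the left of the identity aligns perfectly with the factor $2n$ appearing on the right, suggesting a $2n$-to-$1$ correspondence between $B_n \setminus D_n$ and $D_{n-1}$.

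First, I would verify that the type $B$ and type $D$ affine descent definitions give the same value on any element of $D_n$. The crystallographic root systems of types $B$ and $D$ share the lowest root $-(e_1+e_2)$, so the reflection $s_0$ appearing in $\widetilde{\mathcal{D}}_B$ and $\widetilde{\mathcal{D}}_D$ is the same, and its length-increasing condition on $\sigma$ is the same in both cases. The ordinary type $B$ and type $D$ descent sets agree at positions $2, \dots, n$; at position $1$ they differ only through the choice of the convention $\sigma_0 = 0$ versus $\sigma_0 = -\sigma_2$, and on elements of $D_n$ (with an even number of negative signs) one can check that the two definitions produce the same total count. This would establish $\sum_{\sigma \in D_n} x^{|\widetilde{\mathcal{D}}_B(\sigma)|} = \widetilde{D}_n(x)$.

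Second, for the complementary set $B_n \setminus D_n$, I would construct a weight-preserving bijection $B_n \setminus D_n \longleftrightarrow [\pm n] \times D_{n-1}$ under which the affine type $B$ descent count on $\sigma$ equals the ordinary type $D$ descent count on the corresponding $\tau \in D_{n-1}$, plus exactly one. The natural candidate is to locate the position of $\pm n$ in $\sigma$, record its signed value $j \in [\pm n]$ (giving one of $2n$ choices), delete this entry and reindex, and then, if the resulting signed permutation has an odd number of negatives, flip one sign according to a canonical rule to land in $D_{n-1}$. The bookkeeping must be arranged so that the removal of this entry eliminates precisely one descent—this extra descent being the source of the $x$ factor—yielding $\sum_{\sigma \in B_n \setminus D_n} x^{|\widetilde{\mathcal{D}}_B(\sigma)|} = 2nx \cdot D_{n-1}(x)$. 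Adding the two contributions then gives the desired identity.

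The main obstacle will be step two, where the bijection must be calibrated so that the interaction between the affine descent at $0$ (which depends subtly on the parity of negatives and on the first two entries) and the removal of $\pm n$ always produces an exact shift of $1$ in the descent count. The case analysis—distinguishing whether $\pm n$ sits at the boundary, interior, or first position, and whether its sign is aligned with the global parity—is the technical heart of the argument. As an alternative, one could attempt a purely algebraic derivation: combine Stembridge's identity (Lemma~9.1 of the excerpt) with Proposition~6.1 of~\cite{DPS09}, namely $2\widetilde{C}_n(x) = \widetilde{B}_n(x) + 2nxC_{n-1}(x)$, together with known relations expressing $\widetilde{C}_n(x)$ and $\widetilde{D}_n(x)$ in terms of the ordinary Eulerian polynomials, and then verify the identity by generating-function manipulation.
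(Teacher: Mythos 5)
There is no proof in the paper to compare against: this proposition is one of the results quoted with a source (Proposition~6.2 of \cite{DPS09}), so your proposal has to stand on its own---and unfortunately its central step is false. The identity does \emph{not} refine over the partition $B_n = D_n \sqcup (B_n\setminus D_n)$. Your first claim already fails elementwise: for $\sigma=(1,-2,-3)\in D_3$ we have $|\widetilde{\mathcal{D}}_B(\sigma)|=2$ (descents at positions $2,3$; none at position $1$ since $\sigma_1>0$; no affine descent since the affine condition, the same for types $B$ and $D$, fails here), while $|\widetilde{\mathcal{D}}_D(\sigma)|=3$ (with $\sigma_0=-\sigma_2=2$ all three positions are descents). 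Worse, the discrepancy does not cancel in distribution: a direct check at $n=3$ gives $\sum_{\sigma\in D_3}x^{|\widetilde{\mathcal{D}}_B(\sigma)|}=5x+14x^2+5x^3$, whereas $\widetilde{D}_3(x)=4x+16x^2+4x^3$ (consistent with $\widetilde{D}_3=\widetilde{A}_3=4xA_2(x)$, cf.\ Lemma~\ref{lem:FulmanPetersenA}). Consequently your second claim must fail as well: $\sum_{\sigma\in B_3\setminus D_3}x^{|\widetilde{\mathcal{D}}_B(\sigma)|}=5x+14x^2+5x^3\neq 6x(1+x)^2=2\cdot 3\cdot x\,D_2(x)$, so no $2n$-to-$1$, descent-shifting correspondence $B_n\setminus D_n\leftrightarrow D_{n-1}$ of the kind you describe can exist. (The two halves do add up to $\widetilde{B}_3(x)=10x+28x^2+10x^3$, as they must, but the split in the identity is simply not along the index-two subgroup.) The cardinality count $|B_n\setminus D_n|=2n\,|D_{n-1}|$ is too coarse to support the proposed bijection.

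The one sound observation in your step 1 is that the reflection $s_0$ attached to the affine node is the same in types $B$ and $D$ (both have highest root $e_1+e_2$), so the affine contribution agrees on $D_n$; the failure comes from the ordinary descent sets, where $\mathcal{D}_B$ and $\mathcal{D}_D$ differ on $D_n$ (the paper's Table~\ref{table:D3} already shows this) and the difference survives in the generating function. Your algebraic fallback is also not a proof as stated: Proposition~6.1 of \cite{DPS09} together with $\widetilde{C}_n(x)=2^nxA_{n-1}(x)$ expresses $\widetilde{B}_n(x)$ in classical terms, but there is no independent ``known relation'' for $\widetilde{D}_n(x)$ to plug in---such a relation is essentially what the proposition being proved supplies---so that route is circular. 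A correct argument has to analyze how the type $B$ and type $D$ (affine) descent conditions interact across all of $B_n$, as is done in \cite{DPS09}, rather than restricting the type $B$ statistic to $D_n$.
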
 
This identity might be helpful for finding a multivariate refinement of the affine Eulerian polynomial of type $D$.

\section{Conclusion and further remarks on the statistics $\mathcal{AT}, \mathcal{DT}$}

In this paper, we have extended the stability results for the multivariate type $A$ 
Eulerian polynomial to Eulerian polynomials (and affine Eulerian polynomials) of some 
other Coxeter groups and the generalized permutation group $G_{n,r}$. 
A crucial step in our proofs was to find a suitable generalization of the descent top 
and ascent top statistics to these groups.

The descent statistic $\{i : \sigma_i > \sigma_{i+1}\}$ for permutations has a long history, going back 
to the works of Carlitz and Riordan. Recall that for a permutation of length $n$ there are $2^{n-1}$ possible descent 
(and ascent) sets, as each position $i \in [n-1]$ can either be a descent or an ascent. In particular, for any subset 
$S \subset [n-1]$ there is a permutation whose descent set is $S$. Finding the 
number of permutations with a given descent (and ascent) set is a classical problem---one uses a standard 
inclusion-exclusion argument (see Example 2.2.4 in \cite{Sta97}).

We now find the possible descent and ascent top sets (of type $A$), which corresponds to the possible monomials 
in $A_n (\mathbold{x},\mathbold{y})$. (From here on, $\mathcal{DT}$ and $\mathcal{AT}$ refers
to the descent top and ascent top sets of type $A$.)
\begin{prop} 
  Given $(\mathcal{DT},\mathcal{AT}) \subset [n]\times[n]$ define 
  $\mathcal{P} = \mathcal{DT}\cap\mathcal{AT}$ and 
  $\mathcal{V} = [n]\setminus(\mathcal{DT}\cup\mathcal{AT})$. The pair $(\mathcal{DT},\mathcal{AT})$ 
  is the descent top and ascent top set of some permutation of $[n]$ if and only if, 
  for all $i=1,\ldots,n$, $|[i] \cap \mathcal{V}| > |[i] \cap \mathcal{P}|$.
\end{prop}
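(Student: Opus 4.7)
The plan is to reformulate via padded peaks and valleys, adjoining $\pi_{0}=\pi_{n+1}=+\infty$ to any permutation. Then $k=\pi_{j}\in\mathcal{P}$ iff $\pi_{j-1}<\pi_{j}>\pi_{j+1}$ (interior peak), and $k\in\mathcal{V}$ iff $\pi_{j-1}>\pi_{j}<\pi_{j+1}$ (padded valley, which via the infinite pads absorbs the boundary cases); the values in $\mathcal{DT}\triangle\mathcal{AT}$ lie on strictly monotonic slopes between consecutive padded extrema. Because the padded sequence drops from $+\infty$ to $V_{1}$ and rises back to $+\infty$, the extrema alternate as $V_{1},P_{1},V_{2},P_{2},\dots,V_{k},P_{k},V_{k+1}$ with $V_{r}<P_{r}$ and $V_{r+1}<P_{r}$, and in particular $|\mathcal{V}|=|\mathcal{P}|+1$.

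For necessity, fix $i\in[n]$ and pair each $P_{r}$ with the valley $V_{r}$ on its left in position order. Since $V_{r}<P_{r}$, the implication $P_{r}\le i\Rightarrow V_{r}\le i$ ensures each pair contributes at least as many valleys as peaks to $[i]$. The strict surplus comes either from the leftover $V_{k+1}$ when $V_{k+1}\le i$, or, when $V_{k+1}>i$, from picking $r_{0}$ maximal with $V_{r_{0}}\le i$ (available because $1\in\mathcal{V}$ always) and checking $P_{r_{0}}>i$: if $P_{r_{0}}\le i$, then $V_{r_{0}+1}<P_{r_{0}}\le i$ would contradict the maximality of $r_{0}$.

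For sufficiency, I first note the implicit arithmetic constraint $|\mathcal{DT}|+|\mathcal{AT}|=n-1$ (equivalently $|\mathcal{V}|=|\mathcal{P}|+1$) satisfied by every realizable pair, and then construct $\pi$ explicitly. Sort $\mathcal{V}=\{v_{1}<\cdots<v_{k+1}\}$ and $\mathcal{P}=\{p_{1}<\cdots<p_{k}\}$; evaluating the hypothesis at $i=v_{r}$ yields $v_{r}<p_{r}$, and evaluating at $i=p_{r}$ together with $\mathcal{V}\cap\mathcal{P}=\emptyset$ upgrades $v_{r+1}\le p_{r}$ to $v_{r+1}<p_{r}$, so the sorted interleaving $v_{1},p_{1},v_{2},\dots,v_{k+1}$ is a valid zigzag. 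This partitions the padded line into $2k+2$ monotonic segments. Since $v_{1}=1$, every $v\in\mathcal{D}^{*}:=\mathcal{DT}\setminus\mathcal{AT}$ satisfies $v>v_{1}$, so I dump all of $\mathcal{D}^{*}$ into the initial descending segment, sorted decreasingly. For each $v\in\mathcal{A}^{*}:=\mathcal{AT}\setminus\mathcal{DT}$, the hypothesis at threshold $v$ produces an ascending segment whose bounds bracket $v$ (concretely, with $r=|[v]\cap\mathcal{P}|+1\le |[v]\cap\mathcal{V}|$, either $v_{r}<v<p_{r}$ or $v>v_{k+1}$), and I place $v$ there, sorting each segment increasingly.

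I expect the main obstacle to be the sufficiency direction, particularly the bookkeeping needed to confirm that every $\mathcal{A}^{*}$-value lands in a bracketing ascending segment and that the resulting permutation indeed realizes the prescribed $(\mathcal{DT},\mathcal{AT})$. A secondary subtlety is recognizing that the ``if'' direction implicitly requires the arithmetic condition $|\mathcal{DT}|+|\mathcal{AT}|=n-1$, which follows from the extremum-alternation structure but is not literally enforced by the inequality $|[i]\cap\mathcal{V}|>|[i]\cap\mathcal{P}|$ alone.
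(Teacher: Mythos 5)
Your proposal is correct, but it proves the proposition by a genuinely different route than the paper. The paper's own proof is a one-liner: induct on $n$ using the insertion recurrence \eqref{eq:recurrencetypeA}, so that the admissible pairs $(\mathcal{DT},\mathcal{AT})$ are characterized as the monomials produced by repeatedly applying $(x_{n}+y_{n})+x_{n}y_{n}\bpartial$, and the ballot inequality is preserved at each insertion of a new maximum. You instead argue directly and non-inductively through the Fran\c{c}on--Viennot decomposition into peaks $\mathcal{P}$, valleys $\mathcal{V}$, double descents and double ascents (with $+\infty$ padding): necessity from the alternation $V_1,P_1,\dots,V_k,P_k,V_{k+1}$ of padded extrema together with the pairing/maximal-index argument, and sufficiency by an explicit zigzag construction --- the interleaving $v_1,p_1,v_2,\dots,v_{k+1}$ justified by evaluating the inequality at $i=v_r$ and $i=p_r$, all of $\mathcal{DT}\setminus\mathcal{AT}$ dumped decreasingly into the initial run ending at $v_1=1$, and each $a\in\mathcal{AT}\setminus\mathcal{DT}$ bracketed into an ascending segment via the inequality at $i=a$. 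I checked the bookkeeping you flagged as the main risk: with $r=|[a]\cap\mathcal{P}|+1$ one indeed gets $v_r<a<p_r$ when $r\le k$ and $a>v_{k+1}$ otherwise, and the assembled word realizes exactly the prescribed pair, so the plan closes. What each approach buys: the paper's induction is essentially free once the recurrence is established, and since it characterizes monomials of the homogeneous polynomial $A_{n-1}(\mathbold{x},\mathbold{y})$ the degree condition is built in; your argument is self-contained, yields an explicit witness permutation, and exposes the ballot structure that the paper then exploits in the 2-colored Motzkin path bijection. Your closing remark is also a genuine point rather than a quibble: read literally, the ``if'' direction needs the extra constraint $|\mathcal{DT}|+|\mathcal{AT}|=n-1$ (for $n=2$ the pair $(\emptyset,\emptyset)$ satisfies the inequalities but is not realizable); in the paper's reading this is implicit because every monomial of $A_{n-1}(\mathbold{x},\mathbold{y})$ has total degree $n-1$, and your proof correctly incorporates it.
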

\begin{proof}
  Clear from the recurrence given in \eqref{eq:recurrencetypeA}.
\end{proof}

In fact, the $(\mathcal{DT}, \mathcal{AT})$ pair of statistics turns out to be equivalent to a 
well-known statistic studied by Fran\c{c}on and Viennot \cite{FV79}, called ``type''---the quadruple of 
statistics consisting of the set of peaks, valleys, double descents, and double ascents. In particular, 
the set of peaks $\{\sigma_i : \sigma_{i-1} < \sigma_i > \sigma_{i+1}\}$ is $\mathcal{P}$, defined above, 
and similarly, the set of valleys $\{\sigma_i : \sigma_{i-1} > \sigma_i < \sigma_{i+1}\}$ is $\mathcal{V}$.
For the set of double descents $\mathcal{DD}:=\{\sigma_i : \sigma_{i-1} > \sigma_i > \sigma_{i+1}\}$
we have that $\mathcal{DD} = \mathcal{DT}\setminus\mathcal{AT}$ (and similarly for the double 
ascents we have $\mathcal{DA} = \mathcal{AT}\setminus\mathcal{DT}$).

In light of the above, the following theorem should not be a surprise.
\begin{thm}
  The number of distinct monomials in the expansion of $A_{n-1}(\mathbold{x}, \mathbold{y})$ 
  is counted by $C_n = \frac{1}{n+1}\binom{2n}{n}$, the $n$th Catalan number.
\end{thm}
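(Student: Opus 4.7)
My approach is to biject realizable pairs $(\mathcal{DT},\mathcal{AT})$ for permutations of $[n]$ with $2$-colored Motzkin paths of length $n-1$, and then apply a classical count. Via the preceding Proposition together with the identity $|\mathcal{DT}|+|\mathcal{AT}|=n-1$ (counting the $n-1$ adjacencies of a permutation as either an ascent or a descent), which by inclusion--exclusion yields $|\mathcal{V}|-|\mathcal{P}|=1$, a realizable labeling is a map $\lambda:[n]\to\{P,V,DD,DA\}$ such that, under the weight $(V,P,DD,DA)\mapsto(+1,-1,0,0)$, every prefix sum is strictly positive and the total sum equals $+1$.

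Since the smallest value $1$ is smaller than each of its neighbors in any permutation, $\lambda(1)=V$ is automatic. Stripping this forced first step and shifting heights down by $1$, the labels $\lambda(2),\ldots,\lambda(n)$ become a length-$(n-1)$ lattice path with four step types---up ($V$), down ($P$), and two colors of flat ($DD$, $DA$)---starting and ending at height $0$ and staying non-negative: precisely a $2$-colored Motzkin path of length $n-1$.

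Classifying such a path by its number $2k$ of non-flat steps (with $\binom{n-1}{2k}$ choices for the positions, $C_k$ Dyck-word shapes, and $2^{n-1-2k}$ colorings of the flat steps) yields
\[
  \sum_{k=0}^{\lfloor(n-1)/2\rfloor}\binom{n-1}{2k}C_k\,2^{n-1-2k} \;=\; C_n,
\]
an identity which may be verified via generating functions: the $2$-colored Motzkin generating function $M(x)$ satisfies $M(x)=1+2xM(x)+x^2M(x)^2$, which solves to $M(x)=(C(x)-1)/x$ for the Catalan generating function $C(x)$, so $[x^{n-1}]M(x)=C_n$.

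The main obstacle is verifying sufficiency in the characterization---that every $\lambda$ meeting the prefix and total-sum conditions is realized by some permutation. This can be established by induction on $n$ using the recurrence \eqref{eq:recurrencetypeA}: inserting $n+1$ at either end of a permutation of $[n]$ appends a new $DD$ or $DA$ label (the $(x_{n+1}+y_{n+1})A_{n-1}$ summand), while an interior insertion creates $n+1$ as a new $P$ and simultaneously demotes exactly one adjacent label via one of $DD\to V$, $DA\to V$, $P\to DA$, or $P\to DD$---precisely matching the action of $x_{n+1}y_{n+1}\bpartial$ on the lower-rank polynomial, and paralleling the standard Motzkin-path recursion $m_n=2m_{n-1}+\sum_{j=0}^{n-2}m_j m_{n-2-j}$.
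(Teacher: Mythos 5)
Your proof is correct and matches the paper's approach: both construct the bijection from realizable $(\mathcal{DT},\mathcal{AT})$ pairs to $2$-colored Motzkin paths of length $n-1$ by reading off, for each $j\ge 2$, a step type according to whether $j$ is a peak, valley, double descent, or double ascent, with the preceding proposition's prefix condition giving non-negativity and $|\mathcal{V}|-|\mathcal{P}|=1$ closing the path. You additionally supply a generating-function verification of the Catalan count and spell out the realizability direction via the recurrence, both of which the paper simply cites as known or dismisses as clear.
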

\begin{proof}
  We give a bijection from $(\mathcal{DT},\mathcal{AT}) \in [n]\times[n]$ to 2-colored Motzkin paths 
  of length $n-1$. 
  A \emph{2-colored Motzkin path} of length $n-1$ is a lattice path of $\mathbb{N}^2$ running from $(0,0)$ 
  to $(n-1,0)$ that never goes below the $x$-axis and whose allowed steps are:
\begin{itemize}
    \item $NE$ steps $(1,1)$,
    \item $SE$ steps $(1,-1)$, and
    \item Two different colors of $E$ steps $(1,0)$, denoted $\overline{E}$ and $\underline{E}.$
\end{itemize}
  For $j = 2,\ldots,n$, let the $(j-1)$st step of the Motzkin path be $SE$, $NE$, 
  $\overline{E}$, or $\underline{E}$ depending on which set ($\mathcal{P}, \mathcal{V}, 
  \mathcal{DD}$, or $\mathcal{DA}$, respectively) $j$ belongs to. The 2-colored Motzkin paths of 
  length $n-1$ are known to be counted by the Catalan numbers $C_n$ (see, for example, \cite{Sta11}).
\end{proof}

Exploring this connection, we can also give a formula for the coefficient on a particular monomial 
$\mathbold{x}^{\mathcal{DT}} \mathbold{y}^{\mathcal{AT}}$---that is, the number of permutations with 
a given descent and ascent top set.  
Following Viennot, we define a valuation on these 2-colored Motzkin paths.  $NE$ and $SE$ steps 
\emph{starting} at height $k\geq0$ are given a weight $a_k=c_k=k+1$.  We give $\overline{E}$ steps 
a weight of $\overline{b}_k=k+1$ and $\underline{E}$ steps a weight of $\underline{b}_k=k+1$, which 
is equivalent to forgetting that we had two different types of $E$ steps and weighting an (uncolored) 
east step by $b_k=2k+2$.
Viennot's theory of Laguerre histories~\cite{Vie84}, which gives a combinatorial interpretation for 
the moments of the Laguerre polynomials, allows us to then recover the permutations with given descent and ascent top sets. 
In particular, the number of permutations associated to a path is simply the product of the weights
 of the steps in the path.

Similar reasoning works for type $B$ as well.

\begin{thm}
  The number of distinct monomials 
  in the expansion of $B_{n}(\mathbold{x}, \mathbold{y};1)$ is counted by $C_n$, the $n$th Catalan number.
\end{thm}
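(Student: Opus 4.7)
The plan is to adapt the bijective argument from the preceding Catalan theorem for type $A$. With $q = 1$, the recurrence \eqref{eq:Bqrecurrence} reads
\[
B_n(\mathbold{x}, \mathbold{y}; 1) = (x_n + y_n) B_{n-1}(\mathbold{x}, \mathbold{y}; 1) + 2 x_n y_n \bpartial B_{n-1}(\mathbold{x}, \mathbold{y}; 1),
\]
which has exactly the same structural form as the type $A$ recurrence \eqref{eq:recurrencetypeA}: the only difference is the positive coefficient $2$ in front of $x_n y_n \bpartial$, which merely scales multiplicities and does not change which monomials appear. Therefore the support of $B_n(\mathbold{x}, \mathbold{y}; 1)$ satisfies the same recursion as the support of $A_n(\mathbold{x}, \mathbold{y})$: each monomial $m$ at the previous level contributes the three (not necessarily new) monomials $x_n m$, $y_n m$, and $x_n y_n \cdot (m/v)$ for every variable $v$ dividing $m$.

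Building on this, I would first formulate the analogue of the preceding proposition characterizing the admissible pairs $(\mathcal{DT}_B(\sigma), \mathcal{AT}_B(\sigma))$ through the four-part partition $\mathcal{P} = \mathcal{DT}_B \cap \mathcal{AT}_B$, $\mathcal{V} = [n] \setminus (\mathcal{DT}_B \cup \mathcal{AT}_B)$, $\mathcal{DD} = \mathcal{DT}_B \setminus \mathcal{AT}_B$, $\mathcal{DA} = \mathcal{AT}_B \setminus \mathcal{DT}_B$, read off from unwinding the recurrence above. I would then mimic the type $A$ bijection by sending each $k \in [n]$ to an $SE$, $NE$, $\overline{E}$, or $\underline{E}$ step according to whether $k$ lies in $\mathcal{P}$, $\mathcal{V}$, $\mathcal{DD}$, or $\mathcal{DA}$, respectively, yielding a $2$-colored Motzkin path whose length is determined by the parameters. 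The count of such paths then delivers the claimed Catalan number by the same enumeration we just used for type $A$.

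The main obstacle is correctly pinning down the ballot-type inequality and path length that distinguish the type $B$ admissible tuples from the type $A$ ones. In type $A$ the smallest value $1$ is always a valley and is excluded from the path, whereas in type $B$ the distinguished prefix value $\sigma_0 = 0$ takes over the analogous boundary role; one can check directly from the definition that consequently the largest value $n$ is never a valley in the type $B$ setting. Absorbing this mild boundary shift into the ballot condition (in practice, replacing a strict inequality by its non-strict counterpart, and adjusting the starting/ending heights of the Motzkin paths accordingly) is the only real computation; once it is in place, Viennot's theory of Laguerre histories applies verbatim and delivers the Catalan enumeration.
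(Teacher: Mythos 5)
Your overall route is the same as the paper's: the paper proves this by precisely the ``similar reasoning'' you describe, classifying each value of $[n]$ as a peak, valley, double descent, or double ascent (with respect to the window prefixed by $\sigma_0=0$) and reading off a $2$-colored Motzkin path, so in outline the proposal is sound. But the two places where you wave your hands need correcting. First, the boundary fact you single out---``the largest value $n$ is never a valley''---is true but is not what distinguishes type $B$: it already holds in type $A$ and it is not a consequence of the prefix $\sigma_0=0$. The operative change is at the \emph{small} end: in type $A$ the value $1$ is always a valley and contributes no step (hence paths of length $n-1$ for $A_{n-1}$), whereas in type $B$ the value $1$ is a descent top or ascent top whenever $\pm1$ occupies the first window position (and is never a peak), so all $n$ values contribute a step; counting pairs then gives $|\mathcal{P}|=|\mathcal{V}|$ instead of $|\mathcal{V}|=|\mathcal{P}|+1$, which is exactly what turns the strict ballot inequality into the non-strict one you anticipated, while the paths still run from height $0$ to height $0$ (no adjustment of endpoints is needed, only of length).

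Second, carry the length bookkeeping through: your bijection produces $2$-colored Motzkin paths of length $n$, and these are counted by $C_{n+1}$, not $C_n$. This agrees with the data ($B_1$, $B_2$, $B_3$ at $q=1$ have $2$, $5$, $14$ distinct monomials; compare the appendix) and parallels the type $A$ theorem, which pairs $A_{n-1}$ (paths of length $n-1$) with $C_n$; so the literal index in the statement must be read with this shift, and asserting that the construction ``delivers the claimed Catalan number'' without fixing the length would leave your proof not closing against the stated $C_n$. In fact, your opening observation already gives the cleanest argument and makes the index transparent: since all coefficients are positive, the support of $B_n(\mathbold{x},\mathbold{y};1)$ obeys exactly the same recursion as the support of $A_n(\mathbold{x},\mathbold{y})$, and the base supports $\{x_1,y_1\}$ and $\{x_2,y_2\}$ match after shifting subscripts by one; hence $B_n(\mathbold{x},\mathbold{y};1)$ has exactly as many distinct monomials as $A_n(\mathbold{x},\mathbold{y})$, namely $C_{n+1}$, with no new ballot analysis required.
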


Likewise, the coefficents on a particular monomial in $B_{n}(\mathbold{x}, \mathbold{y};1)$ are given by 
Viennot's theory by using the weights $a_k=2(k+1), b_k=4k+2,$ and $c_k = 2k$.  
It is a simple extension to show that a monomial 
$\mathbold{x}^{\mathcal{DT}} \mathbold{y}^{\mathcal{AT}}$ in $B_{n}(\mathbold{x}, \mathbold{y};q)$ has a 
$q$-coefficient given by weighting $NE$ steps by $a_k=(k+1)(1+q)$, $\overline{E}$ steps by 
$\overline{b}_k=1+k(1+q)$, $\underline{E}$ steps by $\underline{b}_k = q+k(1+q)$, and $SE$ steps by $c_k = k(1+q)$.  
It is also easy to extend these results to $G_{n,r}(\mathbold{x}, \mathbold{y};q)$.

\section*{Acknowledgements}
We thank Jim Haglund for helpful suggestions and Dennis Stanton for showing us the connection between the 
the number of monomials and moments of Laguerre polynomials (and its connections to Viennot's combinatorial theory).
We also thank the reviewers for useful suggestions that improved the presentation of the paper.
Finally, we thank the organizers of the $7$th Graduate Student Combinatorics Conference for
making it possible for the authors to meet and start this collaboration. 

\medskip

\noindent
\emph{Note added in proof.} C.~D.~Savage and the first author have resolved Conjecture~\ref{conj:Brenti} (and thus Conjecture~\ref{conj:brenti}) and 
part of Conjecture~\ref{conj:dps} in  \cite{SV13}.
\bibliographystyle{amsplain}
\bibliography{eulerpoly-v2} 
\appendix
\section{List of stable multivariate Eulerian polynomials}

\begin{align*}
A_0 (\mathbold{x},\mathbold{y}) &=1\\
A_1 (\mathbold{x},\mathbold{y}) &=x_2+y_2 \\
A_2 (\mathbold{x},\mathbold{y}) &= x_2 x_3+x_3 y_2+x_2 y_3+2 x_3 y_3+y_2 y_3\\
\\
B_1 (\mathbold{x},\mathbold{y}) &= x_1+y_1\\
B_2 (\mathbold{x},\mathbold{y}) &= x_1 x_2+x_2 y_1+x_1 y_2+4 x_2 y_2+y_1 y_2 \\
\\
B_1(\mathbold{x},\mathbold{y};q) &= q x_1+y_1 \\
B_2(\mathbold{x},\mathbold{y};q) &= q^2 x_1 x_2+q x_2 y_1+q x_1 y_2+(1+q)^2x_2 y_2 +y_1 y_2 \\
\\
\widetilde{A}_1(\mathbold{x},\mathbold{y}) &= 2 x_2 y_2\\
\widetilde{A}_2(\mathbold{x},\mathbold{y}) &= 2 x_2 x_3 y_3+2 x_3 y_2 y_3\\
\widetilde{A}_3(\mathbold{x},\mathbold{y}) &= 2 x_2 x_3 x_4 y_4+2 x_3 x_4 y_2 y_4+2 x_2 x_4 y_3 y_4+4 x_3 x_4 y_3 y_4+2 x_4 y_2 y_3 y_4\\
\\
\widetilde{C}_1(\mathbold{x},\mathbold{y}) &= 2 x_1 y_1\\
\widetilde{C}_2(\mathbold{x},\mathbold{y}) &= 4 x_1 x_2 y_2+4 x_2 y_1 y_2\\
\widetilde{C}_3(\mathbold{x},\mathbold{y}) &= 8 x_1 x_2 x_3 y_3+8 x_2 x_3 y_1 y_3+8 x_1 x_3 y_2 y_3+16 x_2 x_3 y_2 y_3+8 x_3 y_1 y_2 y_3\\
\end{align*}
\end{document}